\newcommand{\R}{\mathbb{R}}
\newcommand{\N}{\mathbb{N}}
\newcommand{\Z}{\mathbb{Z}}
\newcommand{\Q}{\mathbb{Q}}
\newcommand{\C}{\mathbb{C}}
\renewcommand{\P}{\mathbb{P}}
\newcommand{\GL}{\mathrm{{GL}}}
\newcommand{\SL}{\mathrm{{SL}}}
\newcommand{\PGL}{\mathrm{{PGL}}}
\newcommand{\rd}{\mathrm{{rd}}\,}
\def\bsys{\left\{\begin{array}}
\def\esys{\end{array}\right.}
\newcommand{\eps}{\varepsilon}
\renewcommand{\emptyset}{\varnothing}
\newcommand{\de}{\,\mathrm{d}}
\newcommand{\id}{\mathrm{id}}
\newcommand{\Aut}{\mathrm{Aut}}
\newcommand{\PsAut}{\mathrm{PsAut}}
\newcommand{\Nef}{\mathrm{Nef}}
\newcommand{\Mov}{\mathrm{Mov}}
\theoremstyle{plain}
\newtheorem{theorem}{Theorem}[section]
\newtheorem{lemma}[theorem]{Lemma}
\newtheorem{minilemma}[theorem]{Mini-lemma}
\newtheorem{proposition}[theorem]{Proposition}
\newtheorem{corollary}[theorem]{Corollary}
\theoremstyle{definition}
\newtheorem{definition}[theorem]{Definition}
\newtheorem{notation}[theorem]{Notation}
\newtheorem{remark}[theorem]{Remark}
\newtheorem{example}[theorem]{Example}
\newtheorem{question}[theorem]{Question}
\newtheorem{conjecture}[theorem]{Conjecture}
\newcommand{\lem}[2][None]
{\begin{lemma}\ifthenelse{\equal{#1}{None}}{}{\label{#1}}
#2
\end{lemma}}
\newcommand{\minlem}[2][None]
{\begin{minilemma}\ifthenelse{\equal{#1}{None}}{}{\label{#1}}
#2
\end{minilemma}}
\newcommand{\prop}[2][None]
{\begin{proposition}\ifthenelse{\equal{#1}{None}}{}{\label{#1}}
#2
\end{proposition}}
\newcommand{\cor}[2][None]
{\begin{corollary}\ifthenelse{\equal{#1}{None}}{}{\label{#1}}
#2
\end{corollary}}
\newcommand{\theo}[2][None]
{\begin{theorem}\ifthenelse{\equal{#1}{None}}{}{\label{#1}}
#2
\end{theorem}}
\newcommand{\defi}[2][None]
{\begin{definition}\ifthenelse{\equal{#1}{None}}{}{\label{#1}}
#2
\end{definition}}
\newcommand{\nota}[2][None]
{\begin{notation}\ifthenelse{\equal{#1}{None}}{}{\label{#1}}
#2
\end{notation}}
\newcommand{\rem}[2][None]
{\begin{remark}\ifthenelse{\equal{#1}{None}}{}{\label{#1}}
#2
\end{remark}}
\newcommand{\expl}[2][None]
{\begin{example}\ifthenelse{\equal{#1}{None}}{}{\label{#1}}
#2
\end{example}}
\title[On the cone conjecture for finite quotients]{Well-clipped cones under finite quotients and applications to the cone conjecture}
\author{C\'ecile Gachet}
\address{Ruhr-Universit\"at Bochum, Universit\"atsstr. 150, 
44801 Bochum, Germany
}
\email{cecile.gachet@ruhr-uni-bochum.de}
\begin{document}

\begin{abstract}
We introduce a property of convex cones, being ``well-clipped'', that is inspired by the work of several complex algebraic geometers on the Morrison-Kawamata cone conjecture. That property is satisfied by movable cones of divisors on various complex projective varieties of Calabi--Yau type, such as abelian varieties and projective hyperkähler manifolds. The property of being well-clipped has the advantage to descend under taking invariants by a finite group action, and to be stable under direct sums. In the class of well-clipped cones, we also provide a simple characterization of those cones that admit a rational polyhedral fundamental domain under some natural group action. 

We use this framework to prove the movable Morrison--Kawamata cone conjecture for finite quotients of various projective varieties of Calabi--Yau type, notably products of complex projective primitive symplectic varieties, abelian varieties, and smooth rational surfaces underlying klt Calabi--Yau pairs. This entails Enriques manifolds in the sense of Oguiso--Schröer. We also provide Galois descent statements implying the movable Morrison--Kawamata cone conjecture for abelian varieties over arbitrary perfect fields.
\end{abstract}

\maketitle

\section{Introduction}

\subsection{Motivation} Birational geometry rarely behaves well under finite covers. However, finite covers offer meaningful ways to relate certain varieties to one another, and appear in numerous geometric constructions and classifications: Fundamental groups and uniformization results, the increasingly popular framework of orbifold pairs, the Enriques--Kodaira classification of complex projective surfaces, and the celebrated Beauville--Bogomolov decomposition theorem \cite{Beauv84} all involve finite covers in some way, at the very least in the form of quotients by finite group actions.
Modern tools have evolved to tackle this ubiquitous challenge, most notably the equivariant minimal model program (MMP) developed in works by Manin, Iskovskikh, Prokhorov (see notably \cite{Pro21}). From a naive perspective, it is not surprising to be able to detect birational properties of a finite quotient $X/G$ from the initial variety $X$: After all, a lot of birational information is entailed in the nef and movable cones of a variety, which behave well under finite quotients as
 $$\Nef(X/G) = \Nef(X) \cap N^1(X) ^G
 ,\quad
 \overline{\Mov}(X/G) = \overline{\Mov}(X) \cap N^1(X) ^G,$$ 
 where $N^1(X) ^G$ denotes the $G$-invariant subspace of the real Néron-Severi space.
 But this naive perspective soon reaches its limits: By Mori's cone theorem, relevant birational information is typically provided by extremal faces of the dual cone of the nef cone that are $K_X$-negative. This elicits two questions: What about faces that are not $K_X$-negative? More importantly, how to relate extremal faces of a given cone with extremal faces of its $G$-invariant slice?
 
\medskip

%%The first result of this paper can be purely stated in terms of convex geometry. It is a descent statement for the class of {\it well-clipped} cones, which we introduce in this paper.

The Morrison--Kawamata cone conjecture was made by Morrison \cite{Mor}, notably studied by Kawamata \cite{Kaw97}, and reformulated by Totaro \cite{To10}, and offers a prediction anologous to Mori's cone theorem for projective varieties of Calabi--Yau type (which can be thought of as varieties $X$ with $K_X$ non-positive). It can be summarized as follows.

\begin{definition}\label{def-pair}
A {\it pair} $(X,\Delta)$ is the data of a normal $\Q$-factorial complex projective variety $X$ and of an effective $\Q$-divisor $\Delta$ on $X$ with coefficients in $[0,1]$.
A {\it Calabi--Yau pair} is a pair $(X,\Delta)$ such that the $\Q$-Cartier divisor $K_X+\Delta$ is numerically trivial.
A variety $X$ is called {\it of Calabi--Yau type} if there exists a klt Calabi--Yau pair $(X,\Delta)$. (For the definition of a {\it klt} pair, we refer the reader to \cite[Definition 2.34]{KollarMori}.)
\end{definition}

\begin{conjecture}[Morrison--Kawamata cone conjecture]\label{conj-cc}
Let $(X,\Delta)$ be a klt Calabi--Yau pair. There exist rational polyhedral fundamental domains both for
\begin{enumerate}
\item (movable cone conjecture) the group $\PsAut^*(X,\Delta)$ acting on the positive movable cone $\Mov^+(X)$;
\item (nef cone conjecture) the group $\Aut^*(X,\Delta)$ acting on the positive nef cone $\Nef^+(X)$.
\end{enumerate}
Here, for a cone $\mathcal{C}$ in a vector space $V$ with a rational structure $V_{\Q}$, we define the positive cone
$$\mathcal{C}^+:={\rm Conv}_{\R}(\overline{\mathcal{C}}\cap V_{\Q}).$$
We denote by $\PsAut(X)$ the group of birational automorphisms of $X$ that are isomorphisms in codimension one, by $\mathrm{(Ps)Aut}(X,\Delta)$ the subgroup of $\mathrm{(Ps)Aut}(X)$ that preserves the support of $\Delta$, and by $H^*$ the image of a subgroup $H$ of $\PsAut(X)$ by the representation
$$\rho:g\in \PsAut(X)\mapsto g^*\in \mathrm{GL}(N^1(X)).$$
\end{conjecture}

This paper concerns the descent of the movable Morrison--Kawamata cone conjecture under finite quotients, as expressed in the following question.

\begin{question}\label{qu-ccquotients}
Let $(X,\Delta)$ be a pair. For a finite subgroup $G$ of $\Aut(X,\Delta)$, consider the {\it quotient pair} $(X/G,\Delta_G)$, where $\Delta_G$ is the $\Q$-divisor such that $K_X+\Delta$ is the pullback of $K_{X/G}+\Delta_G$ by the quotient map. 
Does the movable cone conjecture for $(X,\Delta)$ imply the movable cone conjecture for $(X/G,\Delta_G)$, for every finite subgroup $G$?
\end{question}

It is worth noting that if $(X,\Delta)$ is a klt Calabi--Yau pair, then so is $(X/G,\Delta_G)$.

\subsection{Main results} We give a positive answer to Question \ref{qu-ccquotients} for a large class of pairs $(X,\Delta)$ for which the movable cone conjecture is currently known to be satisfied.

\begin{theorem}\label{thm-ccforexplicitvars}
Let $(X,\Delta)$ be a klt Calabi--Yau pair defined over $\C$ that decomposes as
$$X = A \times \prod_{i=1}^r Y_i \times \prod_{j=1}^s S_j,\quad 
\Delta = \sum_{j=1}^s p_j^*\Delta_j,$$
where $A$ is an abelian variety, each $Y_i$ is a primitive symplectic variety with canonical singularities and with $b_2(Y_i)\ge 5$ or $\dim Y_i = 2$, and each $S_j$ is a smooth rational surface underlying a klt Calabi--Yau pair $(S_j,\Delta_j)$. Then, the movable cone conjecture holds for every quotient pair $(X/G,\Delta_G)$ of the pair $(X,\Delta)$ by a finite subgroup $G$ of $\Aut(X,\Delta)$. Moreover, there are finitely many isomorphism classes of pairs $(Y,\Delta_Y)$ obtained by small $\Q$-factorial modifications of $(X/G,\Delta_G)$, and the nef cone conjecture holds for each of them.
\end{theorem}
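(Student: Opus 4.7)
The plan is to combine three properties of well-clipped cones developed elsewhere in the paper: that the modified movable cones of each factor appearing in the decomposition of $X$ are well-clipped, that well-clippedness is stable under direct sums, and that it descends to the invariant slice under any finite linear group action. The final statement of the movable cone conjecture then follows from the characterization theorem that identifies, within the class of well-clipped cones, those admitting a rational polyhedral fundamental domain for the natural group action.

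Concretely, the product structure induces a splitting
\[
N^1(X)_\R = N^1(A)_\R \oplus \bigoplus_{i=1}^r N^1(Y_i)_\R \oplus \bigoplus_{j=1}^s N^1(S_j)_\R,
\]
which is compatible with the modified movable cones in that $\Mov^+(X)$ decomposes as the direct sum of the $\Mov^+$ cones of the factors. Invoking the inputs on abelian varieties, on primitive symplectic varieties with $b_2 \geq 5$, and on smooth rational surfaces underlying klt Calabi--Yau pairs---each of which has well-clipped modified movable cone---and using stability under direct sums, I conclude that $\Mov^+(X)$ is well-clipped. The extension of the identity $\overline{\Mov}(X/G) = \overline{\Mov}(X) \cap N^1(X)_\R^G$ recorded in the introduction to modified movable cones yields $\Mov^+(X/G) = \Mov^+(X) \cap N^1(X)_\R^G$, so $\Mov^+(X/G)$ is the $G$-invariant slice of a well-clipped cone and is therefore itself well-clipped by the descent property. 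The characterization theorem then produces a rational polyhedral fundamental domain for the action of the image of $\PsAut(X/G,\Delta_G)$ on $\Mov^+(X/G)$, giving the movable cone conjecture for $(X/G,\Delta_G)$.

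For the remaining assertions, the rational polyhedral fundamental domain tessellates $\Mov^+(X/G)$ by finitely many $\PsAut$-orbits of chambers, each of which corresponds to the pullback of the nef cone of a small $\Q$-factorial modification $(Y,\Delta_Y)$; this yields the finiteness of isomorphism classes of SQMs. For the nef cone conjecture on each such $(Y,\Delta_Y)$, I would use that the stabilizer of the nef chamber in the image of $\PsAut(Y,\Delta_Y)$ is exactly the image of $\Aut(Y,\Delta_Y)$, and then apply a Looijenga-type slicing of the movable cone fundamental domain by that chamber to obtain a rational polyhedral fundamental domain inside $\Nef^+(Y)$. The main technical obstacle I anticipate is a careful identification, compatible with the well-clipped framework, of $\PsAut(X/G,\Delta_G)$ with an appropriate quotient of the normalizer of $G$ inside $\PsAut(X,\Delta)$, so that the group action for which the fundamental domain is produced on the invariant slice matches the group action appearing in the statement of the conjecture.
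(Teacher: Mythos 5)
Your overall strategy---well-clipped cones for each factor, stability under direct sums, descent to the $G$-invariant slice, then the characterization theorem---is indeed the backbone of the paper's argument. However, there are three concrete gaps that prevent the proposal from working as written.

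First, the hypothesis only requires the $Y_i$ to have \emph{canonical} singularities, whereas every well-clippedness input you cite for primitive symplectic varieties (namely, the Markman/LMP24 description of the movable cone and its wall divisors) is valid for \emph{terminal} $\Q$-factorial ones. The paper therefore begins by replacing $X$ with a $G$-equivariant $\Q$-factorial terminalization $\hat{X}$ (using \cite[Corollary 4.4.4]{Pro21}, after observing via \cite[Lemma 4.6]{Druel} that $G$ respects the product structure), proves everything for $\hat{X}/G$, and then descends the conclusion back to $X/G$ via Lemmas~\ref{lem-descent} and~\ref{lem-descentbis}. Without this reduction, your invocation of well-clippedness for the symplectic factors is not justified.

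Second, the descent under the $G$-action is more delicate than you suggest. Proposition~\ref{prop-descentwellclipped} requires $G$ to sit inside $\Aut(\mathcal{A}, V_{\Z})$ for a fixed self-dual homogeneous cone $\mathcal{A}$, and the conclusion you ultimately need is a rational polyhedral fundamental domain for the geometric group $\PsAut^*(X/G,\Delta_G)$, not merely for $\Aut(\mathcal{C}^{\circ},\mathcal{B},V_{\Z}^G)$. You flag this as a ``technical obstacle'' but do not address it; the paper resolves it by (a) proving Corollary~\ref{cor-psautpreserves}, which shows $\PsAut(X,\Delta)$ preserves the ambient cone $\mathcal{A}$, and (b) funneling the whole argument through Theorem~\ref{thm-main}, which combines Theorem~\ref{thm-convexmain} with commensurability results (Lemmas~\ref{lem-normalizecentralize} and~\ref{lem-normalize-commens}) to pass from the abstract group acting on the cone to $\PsAut^*(X/G,\Delta_G)$. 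Your sketch does not establish that the relevant group actually preserves $\mathcal{A}$, nor does it bridge the gap between centralizers/normalizers and the actual pseudoautomorphism group of the quotient.

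Third, the assertion that the rational polyhedral fundamental domain ``tessellates $\Mov^+(X/G)$ by finitely many $\PsAut$-orbits of chambers'' does not follow from the movable cone conjecture alone: a single rational polyhedral fundamental domain can in principle be cut into infinitely many nef chambers. The paper obtains the required local finiteness from the wall-divisor structure of the symplectic factors---using \cite[Proposition 7.7]{LMP24} to bound the squares, then \cite[Proposition 3.4]{MY15} to deduce that any rational polyhedral cone meets only finitely many walls---and packages the deduction of both SQM-finiteness and the nef cone conjecture into Lemma~\ref{lem-finitesqm}. This quantitative input is essential and entirely absent from your proposal.

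One further caution: your claimed identity $\Mov^+(X/G) = \Mov^+(X)\cap N^1(X)^G_{\R}$ is not formally immediate from the analogous identity for $\overline{\Mov}$, since taking the convex hull of rational points does not obviously commute with passing to a $G$-invariant slice. The paper circumvents this entirely by working directly with $(\mathcal{C}^G)^+$ inside Theorem~\ref{thm-convexmain}, rather than relying on such an identity.
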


Primitive symplectic varieties are introduced in \cite[Definition 3.1]{BL18}, and generalize hyperkähler manifolds in singular settings. Note that Theorem \ref{thm-ccforexplicitvars} also holds under the weaker assumption that each $Y_i$ is a primitive symplectic variety with canonical singularities that admits a terminalization $\hat{Y_i}$ of second Betti number $b_2(\hat{Y_i})\ge 5$ or that has dimension $2$. Theorem \ref{thm-ccforexplicitvars} draws its motivation from a large body of work on descent results for the cone conjecture under finite group actions, which we present in Section \ref{sec-lit}.

Our proof of Theorem \ref{thm-ccforexplicitvars} starts with an observation: In most instances of pairs for which the cone conjecture has been proved, much more structure was in fact discovered on the cone under study than one may expect: Movable cones of abelian varieties are self-dual homogeneous cones by \cite{PS12a}, and so are movable cones of their smooth finite quotients by \cite{MQ}; Nef cones of K3 surfaces are cut out by the orthogonal hyperplanes to $(-2)$-curve classes in the hyperbolic cone given by the self-intersection form by \cite{Sterk85}; Movable cones of projective hyperkähler manifolds are described very precisely by Markman in \cite{Markman13}, and so are movable cones of projective primitive symplectic varieties with terminal $\Q$-factorial singularities by \cite{LMP24}. 

With all these results in mind, we define a new class of cones, the {\it well-clipped} cones, meant to subsume all the afore-mentioned algebro-geometric examples, to be stable under direct sum, and to behave better than the Morrison--Kawamata cone conjecture under finite quotients.

\begin{definition}[ = Definition \ref{def-nicelycutout} later]
Let $V=V_{\Z}\otimes \R$ be a finite dimensional real vector space with a preferred lattice. A full-dimensional convex cone $\mathcal{C}$ is {\it well-clipped} if there are a self-dual homgeneous cone $\mathcal{A}$ in $V$ and a set of hyperplanes $({H_i})_{i\in I}$ of $V$ such that
$$\overset{\circ}{\mathcal{C}}=\mathcal{A}\cap \bigcap_{i\in I} {H_{i,+}},$$
where $H_{i,+}$ denotes a connected component of $V\setminus H_i$, and the following three assumptions are satisfied:
\begin{enumerate}
\item[(i)] Decomposing $\mathcal{A} = \bigoplus_{j\in J} \mathcal{A}_j$ into $\R$-indecomposable summands, every hyperplane $H_i$ is of the form
$$H_i = H_i\cap {\rm Span}_{\R}\, \mathcal{A}_{j(i)}
\oplus
\bigoplus_{k\neq j(i)}{\rm Span}_{\R}\, \mathcal{A}_k,$$
with the remaining cone $\mathcal{A}_{j(i)}$ of hyperbolic type and defined over $V_{\Q}$.
\end{enumerate}
Fixing an $\mathrm{Aut}(\mathcal{A},V_{\Z})$-invariant and $V_{\Z}$-integral quadratic form $q$ that is a direct sum of hyperbolic forms on the linear spans of the $(\mathcal{A}_{j(i)})_{i\in I}$ and of a positive definite quadratic form on the other summands' spans, 
\begin{enumerate}
\item[(ii)] For every $i\in I$, the $q$-orthogonal reflection
$\sigma_i$ fixing the hyperplane $H_i$ preserves the lattice $V_{\Z}$.
\item[(iii)] For every $i,k\in I$ such that $H_i\neq H_k$, any elements $e_i,e_k$ perpendicular to $H_i,H_k$ and on their negative sides satisfy $q(e_i,e_k) \ge 0$.
\end{enumerate}
\end{definition}

With this terminology, Theorem \ref{thm-ccforexplicitvars} essentially follows from the following result applied over the field $k=\overline{k}=\C$.

\begin{theorem}\label{thm-main}
Let $(X,\Delta)$ be a pair defined over a perfect field $k$. Let $(\overline{X},\overline{\Delta})$ denote their base change to the algebraic closure $\overline{k}$ of $k$. Suppose that the movable cone $\Mov(\overline{X})$ is well-clipped inside a self-dual homogeneous $\PsAut(\overline{X},\overline{\Delta})$-invariant cone. Suppose also that the pair $(\overline{X},\overline{\Delta})$ satisfies the movable cone conjecture. Then the movable cone conjecture holds for every quotient pair $(X/G,\Delta_G)$ of $(X,\Delta)$ by a finite subgroup $G$ of $\Aut_k(X,\Delta)$. 
\end{theorem}

Note that Theorem \ref{thm-main} encompasses both geometric, arithmetic, and mixed equivariant descent results. We make note of the following consequence, used notably by \cite[Corollary 8.5]{Boundedness} with $k$ a function field:

\begin{corollary}\label{cor-boundedness}
Let $X$ be a torsor over an abelian variety over a perfect field $k$. Then the movable cone conjecture holds for $X$.
\end{corollary}

\medskip

A key result involved in the proof of Theorem \ref{thm-main} can be summarized under the motto ``Well-clipped cones behave themselves under finite quotients, and so does the cone conjecture for them.'' Let us state it more formally. Here, we denote by $C_{\Gamma}(G)$ the centralizer of a subgroup $G$ in a group $\Gamma$, and for a representation $G< \mathrm{GL}(V)$, we denote by
$$\rho^G: \phi\in C_{\mathrm{GL}(V)}(G) 
\mapsto \phi|_{V^G} \in \mathrm{GL}(V^G)$$
the representation induced by restriction to the invariant subspace $V^G$.

\begin{theorem}\label{thm-convexmain}
Let $\mathcal{C}$ be a well-clipped cone in a self-dual homogeneous cone $\mathcal{A}$ in $V=V_{\Z}\otimes \R$. Let $\Gamma$ denote the group $\Aut(\mathcal{C}^{\circ},\mathcal{A},V_{\Z})$. Let $G$ be a subgroup of $\Gamma$ whose action on $V$ has finite orbits. Then, the invariant cone $\mathcal{C}^G$ is well-clipped. 
Furthermore, if there exists a rational polyhedral fundamental domain for the action of $\Gamma$ on $\mathcal{C}^+$, then there exists a rational polyhedral fundamental domain for the action of the group $\rho^G(C_{\Gamma}(G))$ on ${\mathcal{C}^G}^+$.
\end{theorem}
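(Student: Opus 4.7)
My approach is to handle the two assertions in turn: well-clippedness of $\mathcal{C}^G$ by transferring the structural data from $V$ to $V^G$ step by step, relying on classical invariant theory for symmetric cones; and the existence of a rational polyhedral fundamental domain for $C_{\Gamma}(G)$ by invoking the characterization of such domains in the well-clipped setting that the abstract announces as a prior result of the paper.

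For the first part, I would first establish, via Vinberg's Jordan algebra description of symmetric cones, that $\mathcal{A}^G:=\mathcal{A}\cap V^G$ is self-dual and homogeneous in $V^G$ with respect to the restriction of an averaged $G$-invariant inner product; its $\R$-indecomposable summands are indexed by the $G$-orbits on the summands $(\mathcal{A}_j)_{j\in J}$, and a summand coming from an orbit of hyperbolic summands inherits a hyperbolic form from the averaged $q$. Next, I would identify the distinguished hyperplanes for $\mathcal{C}^G$ inside $\mathcal{A}^G$ as the restrictions $H_i\cap V^G$ for those $i\in I$ with $V^G\not\subseteq H_i$, with two such intersections coinciding whenever the parent hyperplanes lie in the same $G$-orbit; call the resulting index set $I^G$. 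Condition (i) for this new family is then immediate. For (iii), I would let $\alpha_{[i]}$ be the primitive integral vector of $V_{\Z}\cap V^G$ colinear to $\sum_{H\in G\cdot H_i}\alpha_H$, which is $G$-invariant and $q$-orthogonal to $V^G\cap H_i$; the pairing $q(\alpha_{[i]},\alpha_{[k]})$ for $[i]\neq[k]$ expands as a positive rational multiple of $\sum_{g,h\in G}q(g(e_i),h(e_k))\ge 0$, so (iii) is inherited. For (ii), I would verify that $\prod_{H\in G\cdot H_i}\sigma_H\in\Aut(V_{\Z},q)$ commutes with $G$, hence restricts to $V^G$, and that this restriction equals the $q$-orthogonal reflection across $V^G\cap H_i$.

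For the fundamental domain statement, I would apply the announced characterization, which should express the existence of a rational polyhedral fundamental domain for the natural group acting on the positive part of a well-clipped cone in terms of finiteness of orbits on the hyperplane family together with some compatibility condition. Under the hypothesis of the theorem, this finiteness holds for $I$ modulo $\Gamma$. Since $C_\Gamma(G)$ is the subgroup of $\Gamma$ commuting with $G$, and since $C_\Gamma(G)$-orbits on $I^G$ are images of $\Gamma$-orbits on $\{i\in I\colon V^G\not\subseteq H_i\}$, the analogous finiteness passes to $C_\Gamma(G)\curvearrowright I^G$; applying the characterization in the reverse direction then yields the required fundamental domain on $(\mathcal{C}^G)^+$.

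The main technical obstacle I anticipate is condition (ii) in the first part. The product-of-reflections construction $\prod_{H\in G\cdot H_i}\sigma_H$ restricts to $V^G$ as a genuine reflection only when the reflections in the orbit pairwise commute, which requires $q$-orthogonality of their normals. If two walls in a single orbit lie in distinct summands of $\mathcal{A}$, this is automatic from condition (i); but if they land in a common hyperbolic summand, orthogonality does not follow, and I would need to combine condition (iii) with the $G$-equivariance of the configuration and with the integrality properties implied by the original (ii) to produce an integral vector whose associated $q^G$-orthogonal reflection still preserves $V_{\Z}\cap V^G$, possibly by replacing $\alpha_{[i]}$ with a more refined primitive normal. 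This case is where I expect the bulk of the geometric work to lie.
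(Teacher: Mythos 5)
Your overall strategy (Jordan algebras for $\mathcal{A}^G$, restricting the hyperplanes, averaging normals over $G$-orbits, products of reflections) is the same as the paper's, but the proposal has genuine gaps at exactly the points where the real work happens. First, you take $\mathcal{A}^G$ as the ambient self-dual homogeneous cone for $\mathcal{C}^G$. This fails in general: when a hyperbolic summand $\mathcal{A}_j$ has $\dim \mathcal{A}_j^G \le 2$, the invariant summand is a halfline or a sum of two halflines, and a restricted hyperplane meeting it nontrivially cannot satisfy condition (i) (there is no hyperbolic summand of dimension $\ge 3$ to carry it). The paper's fix is to \emph{absorb} those cuts into the ambient cone, replacing $\mathcal{A}^G$ by a smaller self-dual homogeneous cone $\mathcal{B}$ whose simplicial part is the already-clipped two-dimensional pieces; one must also discard the indices with $q\bigl(\sum_{g\in G} g(e_i)\bigr)\ge 0$, whose restricted hyperplanes do not cut $\mathcal{A}_{j(i)}^G$ at all. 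Your index set $\{i : V^G\not\subseteq H_i\}$ does neither. Second, you correctly flag condition (ii) as the main obstacle but do not resolve it. The resolution is an arithmetic computation: integrality (ii) forces $q(e_i,g(e_i))\in \tfrac{s_i}{2}\Z_{\ge 0}$ with $s_i=-q(e_i)$, and the requirement $q(\eps_i)<0$ then forces each orbit element to be non-orthogonal to at most one other orbit element, with pairing exactly $s_i/2$; the orbit splits into mutually orthogonal singletons or pairs $\{e,e'\}$, and for a pair one uses $\sigma_e\sigma_{e'}\sigma_e$ (not $\sigma_e\sigma_{e'}$) as the building block. Without this, your element $\prod_{H\in G\cdot H_i}\sigma_H$ need not restrict to a reflection, and integrality of $\tau_i$ is unproven.

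The second half of your argument rests on a guessed form of the characterization of rational polyhedral fundamental domains that does not match the actual one: the paper's criterion is an arithmeticity condition on $\langle\sigma_i\rangle\rtimes\Aut(\mathcal{C}^{\circ},\mathcal{A},V_{\Z})$ inside $\Aut(\mathcal{A})$, not finiteness of orbits on the hyperplane family. Moreover, the transfer ``$C_\Gamma(G)$-orbits on $I^G$ are images of $\Gamma$-orbits on $I$'' is not meaningful: elements of $\Gamma$ not normalizing $G$ do not act on $V^G$, so $\Gamma$-orbits do not push forward. The actual descent requires two nontrivial inputs you do not supply: (a) $\rho^G\bigl(C_{\Aut(\mathcal{A},V_{\Z})}(G)\bigr)$ has finite index in $\Aut(\mathcal{A}^G,V_{\Z}^G)$, which the paper proves by showing the centralizer $C_{\Aut(\mathcal{A})}(G)$ acts transitively on $\mathcal{A}^G$ (via quadratic representations $Q(b)=2L(b)^2-L(b^2)$ of $G$-invariant invertible Jordan elements) together with a compactness/discreteness argument; and (b) the descended reflection group $\langle\tau_i\rangle$ lies in $\rho^G$ of the centralizer of $G$ in $\langle\sigma_i\rangle$, so that the semidirect product structure (hence arithmeticity) descends to $V^G$. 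As written, the fundamental-domain claim for $C_\Gamma(G)$ is not established.
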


A small technical drawback of our definition of a well-clipped cone is that, although the invariant cone $\mathcal{A}^G$ always is self-dual homogeneous, the cone $\mathcal{C}^G$ often is not well-clipped in $\mathcal{A}^G$, but in a smaller self-dual homogeneous cone.

To prove Theorem \ref{thm-convexmain}, we use a mix of convex geometry, computations with Coxeter groups generated by hyperplane reflections, and, surprisingly yet unavoidably, the Koecher--Vinberg equivalence of categories between self-dual homogeneous cones and formally real Jordan algebras. Indeed, it seems difficult to show that an invariant subcone $\mathcal{A}^G$ of a self-dual homogeneous cone $\mathcal{A}$ remains homogeneous directly, without using either the Koecher--Vinberg equivalence or the classification of self-dual homogeneous cones (whose proof relies on the Koecher--Vinberg equivalence anyways).

\subsection{Relationship with previous work}\label{sec-lit}

The literature on descending the cone conjecture under finite quotients is abundant. It can be hard to navigate, since some landmark results are proven independently by different authors, several results are claimed with incomplete proofs, and many seemingly related papers use in fact very distinct sets of assumptions on the variety $X$ and finite group $G$, thereby obtaining starkly different results. 
Let us situate Theorem \ref{thm-ccforexplicitvars} within this context. We focus on the following setting: $X$ is a smooth complex projective variety with trivial canonical bundle and $G<\Aut(X)$ is a finite group.

The first known descent results concern the case when $X$ is a K3 surface and $G$ is generated by an antisymplectic involution without fixed points on $X$, i.e., the quotient $X/G$ is a smooth Enriques surface.
The cone conjecture was proven for Enriques surfaces independently by Horikawa and Namikawa \cite{Hori,Nam85}.
Note that admitting an antisymplectic involution without fixed points is a rather restrictive condition on the surface $X$. For instance, it forces its Picard number $\rho(X)$ to be at least $10$; a stronger lattice-theoretic constraint on $N^1(X)_{\Z}$ is given in \cite[Section 2]{Nam85}.

A more general setup is studied by Oguiso--Sakurai: There, $X$ is a K3 surface and $G<\Aut(X)$ is any finite group. Oguiso--Sakurai establish an equivariant Torelli theorem in \cite[Section 1]{OS01}, which can easily be shown to imply the cone conjecture for the quotient $X/G$. Notably, their result allows to descend the cone conjecture under certain index-one covers, a fact that plays a role in Totaro's proof of the cone conjecture for klt Calabi--Yau surface pairs \cite[Theorem 3.3]{To10}.

\medskip

In higher dimension, the literature focuses on finite groups acting freely on smooth varieties. The freeness assumption allows to derive restrictions on the possible varieties $X$ and groups $G$, and sometimes even to classify them. For instance, Oguiso--Sakurai prove the cone conjecture for finite étale quotients of abelian threefolds by classification and explicit geometric considerations in \cite[Theorems 0.2]{OS01}. By the same methods, they also prove the cone conjecture for finite étale quotients of threefolds of the form $S\times E$, when $S$ is a K3 surface and $E$ an elliptic curve (\cite[Theorem 2.23]{OS01}). Unfortunately, this approach does not generalize well to non-free group actions.

Generalizing this work, Monti--Quedo \cite{MQ} prove the cone conjecture for finite étale quotients of abelian varieties in any dimension.

In \cite{PacSarti}, Pacienza--Sarti study the descent of the cone conjecture under finite étale quotients of hyperkähler manifolds. They prove that the cone conjecture descends under a free finite group action $G$ on a hyperkähler manifold $X$ under the assumption that $G$ acts trivially on the Picard group of $X$; see \cite[Section 3.2]{PacSarti}.
%%%%%%%%%%
They further claim that the cone conjecture descends under a free action of prime order on any hyperkähler manifold \cite[Theorem 1.2]{PacSarti}; however, their proof is incomplete. Let us outline the gap. First, the authors correctly establish the cone conjecture for a very general Enriques manifold of prime index. Then, they claim to derive the cone conjecture for any prime Enriques manifold by a specialization argument in \cite[Theorem 3.5]{PacSarti}. Controlling the behaviour of the nef cone and automorphism group of an Enriques manifold under specialization is notoriously difficult, even in dimension $2$; see the account of Barth--Peters \cite[Page 395]{BP83} and Brandhorst--Shimada \cite[Introduction]{BS20} on Enriques surfaces. Yet the proof of \cite[Theorem 3.5]{PacSarti} does not address this known challenge: it identifies the cone ${\rm Nef}(X_0)$ to a subcone of ${\rm Nef}(X_t)$ and the group ${\rm Aut}^*(X_t)$ to a subgroup of ${\rm Aut}^*(X_0)$ and stops there, failing to quantify how these two inclusions ``work against each other'' (\cite[Page 395]{BP83}). This gap also impacts the proofs of \cite[Theorem 1.2, Proposition 4.1, Proposition 4.2]{PacSarti}.

%However, %let $X_0$ denote a special prime Enriques manifold and let $X_t$ a nearby very general prime Enriques manifold. 
%The proof of \cite[Theorem 3.5]{PacSarti} is identifies the cone ${\rm Nef}(X_0)$ to a subcone of ${\rm Nef}(X_t)$ and the group ${\rm Aut}^*(X_t)$ acting on $H^2(X_t)$ to a subgroup of ${\rm Aut}^*(X_0)$ by parallel transport, and stops there. But these inclusions alone do not allow to derive the nef cone conjecture for $X_0$ from the nef cone conjecture for $X_t$. % Due to this context, one of the motivations behind Theorem \ref{thm-ccforexplicitvars} is to settle a complete proof of the cone conjecture for Enriques manifolds.
%%%%%%%%%

 %The proof essentially descends the original argument of Prendergast-Smith \cite{PS12a} for abelian varieties step-by-step. 
%%%

The descent of the nef cone conjecture under free finite group actions on smooth $K$-trivial varieties is claimed in Skauli's Master Thesis \cite[Theorem 7.2.4]{Skauli}. However, the proof relies on a flawed lemma \cite[Lemma 7.2.3]{Skauli}. Indeed, the diagram presented in the proof of \cite[Lemma 7.2.3]{Skauli} does not commute as is: It should in principle feature an additional (non-trivial) automorphism of $\P^N$. Another way to notice the issue is to note that the proof of \cite[Lemma 7.2.3]{Skauli} does not use the facts that $A$ is $K$-trivial and $G$ acts freely. Yet, without these assumptions, the lemma fails for $A = \P^1$ with any non-trivial finite group action $G$: a general $\alpha\in\PGL(2,\C) = \Aut(A)$ does not commute to $G$. The claimed descent result remains open.
%%%%%%%%%

\medskip

In this rich context, Theorem \ref{thm-ccforexplicitvars} provides a complete proof of the cone conjecture for Enriques manifolds, avoiding the incomplete specialization argument of \cite{PacSarti}. Theorem \ref{thm-ccforexplicitvars} further applies to non-free finite quotients of primitive symplectic varieties. Examples of such finite quotients are much easier to construct than Enriques manifolds; see for instance Example \ref{ex-hk}. Theorem \ref{thm-ccforexplicitvars} also generalizes the results of \cite{MQ} to non-free group actions on abelian varieties; see Example \ref{ex-jac} for an application. Finally, Theorem \ref{thm-ccforexplicitvars} applies to diagonal group actions in the spirit of \cite[Theorem 2.23]{OS01}; see Example \ref{ex-ueh}.

%%While proving Theorem \ref{thm-ccforexplicitvars}, we also establish that, for an Enriques manifold $X$, denoting by $q$ the restriction of the Beauville--Bogomolov--Fujiki form of its universal cover to $H^2(X)$, the movable cone of $X$ is a fundamental domain of a certain reflection group $W_X$ acting on the hyperbolic cone ${\rm Pos}(q)$. Since the cone ${\rm Pos}(q)$ is invariant under deformations of $X$, one could possibly mend the specialization argument in \cite[Theorem 3.5]{PacSarti} by proving that the action of the group $W_{X}\rtimes \PsAut^*(X)$ on $H^2(X)$ is also invariant under deformations of $X$. In a sense, the group $W_{X}$ is what allows to control jumps of cones and symmetry groups under deformation; see the same idea at play in the proof of \cite[Theorem 1.1]{Lutz}.

\medskip

Theorem \ref{thm-main} and the slightly more general Theorem \ref{thm-maintechnical}
generalize several known results of Galois descent. In dimension $2$, Bright--Logan--van Lujik \cite{BvL} use Galois descent to establish the cone conjecture for K3 surfaces over fields of characteristics other than 2, and Xu \cite[Section 2.7, Lemma 23]{Fulin} uses mixed equivariant descent to establish the cone conjecture for Enriques surfaces over fields of characteristic zero. In the continuity of these papers, Theorem \ref{thm-main} contributes to \cite{Brand} in establishing the cone conjecture for Enriques surfaces over fields of odd characteristic and perfect fields of characteristic $2$. As mentioned earlier, Corollary \ref{cor-boundedness} settles the case of abelian varieties over perfect fields. 
Theorem \ref{thm-main} can also be applied to primitive symplectic varieties, in order to descend the cone conjecture from an algebraically closed field to suitable subfields. This recovers the following known results: Over fields characteristic zero, the cone conjecture is proven for irreducible holomorphic symplectic manifolds by Takamatsu \cite[Theorem 1.0.5]{Taka}, and for primitive symplectic varieties by Fu--Li--Takamatsu--Zou \cite[Theorem 3.5]{FLTZ}. Very recently, Faucher \cite[Theorem B]{Faucher} reproved and used this last result to settle the relative cone conjecture for fibrations in primitive symplectic varieties, assuming the existence of good minimal models.

\subsection{Limitations and future directions}

We conclude this introduction by mentioning some limitations to our approach. In principle, Theorem \ref{thm-main} may descend the movable cone conjecture under finite quotients for varieties $X$ beyond the scope of Theorem \ref{thm-ccforexplicitvars}, for instance for the blow-ups of $\P^3$ at $8$ very general points discussed in \cite{SX23} (see Example \ref{ex-listperfclipped}). This example is actually uninteresting, because it has no finite quotients. In principle, it may deform to special smooth threefolds that have interesting finite quotients, which could be studied by the recent work of Lutz \cite{Lutz} and Theorem \ref{thm-main}. In any case, we hope that this remark can inspire a discussion on the cone conjecture for finite quotients of smooth Calabi--Yau threefolds, varieties, and pairs in general. 

In light of the Beauville--Bogomolov decomposition theorem \cite{Beauv84}, a landmark result would be to extend the conclusions of Theorem \ref{thm-ccforexplicitvars} to varieties $X$ of the form
$$X = A \times \prod_{i=1}^r Y_i \times \prod_{j=1}^s Z_j,$$
where $A$ is an abelian variety, each $Y_i$ is a primitive symplectic variety, and each $Z_j$ is a smooth Calabi--Yau variety. However, the movable cone of a smooth Calabi--Yau threefold is not always well-clipped, see Example \ref{ex-schoen}.
Motivated by the singular Beauville--Bogomolov decomposition theorem \cite{GGK,Druel,HP19}, we can ask the same question for Calabi--Yau varieties with canonical singularities in the sense of \cite{GGK}.

A last unfortunate aspect of Definition \ref{def-nicelycutout} is that most non-simplicial rational polyhedral cones are not even well-clipped, see Example \ref{ex-ratpolwellclipped}. However, if $X$ is a projective variety with a rational polyhedral movable cone, then the movable cone conjecture clearly descends under any finite group action on $X$: It is a bit disappointing that this straight-forward case is not directly covered by Theorem \ref{thm-main}.In order to handle this case, and more generally movable cones cut out by a finite $\PsAut(\overline{X},\overline{\Delta})$-invariant collection of hyperplanes inside an appropriate well-clipped cone, we provide a slight technical improvement of Theorem \ref{thm-main} in the form of Theorem \ref{thm-maintechnical}. %%One could include this situation by studying the smallest class of cones that contains all well-clipped cones, all rational polyhedral cones, and is stable by direct sums and finite quotients, but it feels like a rather marginal improvement.

%%Some results on the birational geometry of finite quotients of such singular irreducible holomorphic symplectic varieties appear in the recent work \cite{ZhixinRecent}, but even the movable cone conjecture seems to be open for them

\medskip

\noindent {\bf Acknowledgements.} I want to thank A. Höring, H.-Y. Lin, G. Pacienza and L. Wang for discussions in Fall 2022 that convinced me that descending the Morrison--Kawamata cone conjecture under finite quotients was interesting. Although they do not appear in the final paper, examples and arguments from B. Skauli's Master thesis \cite{Skauli} were quite formative to my understanding of this question. I am also thankful to  M. Monti, K. Oguiso, A. Quedo and A. Sarti for their interest in this problem and for their questions on earlier communications, and in particular to G. Pacienza for discussions in Mai 2023 in Nancy, to C. Lehn for his comments on a first draft, to N. Tsakanikas for useful remarks on the symplectic setting during a visit to Lausanne in March 2025, and to S. Filipazzi for sharing an early draft of \cite{Boundedness}. I also thank J. Schneider for pointing me to the reference \cite{Lamy} regarding the N\'eron--Severi representation of absolute Galois groups of perfect fields. Finally, my sincere gratitude goes to the anonymous referee, both for catching a mistake in a first version of Lemma \ref{lem-surface-nicely} and for their many thoughtful comments. I feel that this paper improved greatly thanks to their feedback. I am partially supported by the projects GAG ANR-24-CE40-3526-01 and the DFG funded TRR 195 - Projektnumber@ 286237555.

\section{Preliminaries}

All cones considered are non-empty and convex in an ambient real finite-dimensional vector space. A cone is called {\it non-degenerate} if it contains no line. For a cone $\mathcal{C}$ in a real vector space $V$ with a rational structure $V_{\Q}\subset V$, we set
$$\mathcal{C}^+ := \mathrm{Conv}_{\R}(\overline{\mathcal{C}}\cap V_{\Q}),$$
where $\overline{\mathcal{C}}$ stands for the closure of $\mathcal{C}$ in $V$ in Euclidean topology. A cone is called {\it rational polyhedral} if it is spanned by a finite set of points of $V_{\Q}$.

For $\mathcal{C}$ a cone in $V=V_{\Z}\otimes \R$, we denote by $\Aut(\mathcal{C})$ the group of linear automorphisms of $V$ that preserve the cone $\mathcal{C}$, by $\Aut(\mathcal{C},V_{\Z})$ its subgroup that also preserves the lattice $V_{\Z}$, by $\Aut^{\circ}(\mathcal{C})$ the connected component of the identity in $\Aut(\mathcal{C})$ with respect to the Euclidean topology. If $\mathcal{A}$ is another cone in $V$, we denote by $\Aut(\mathcal{C},\mathcal{A},V_{\Z})$ the group of linear automorphisms of $V$ that preserve both cones $\mathcal{C}$ and $\mathcal{A}$, and the lattice $V_{\Z}$. 

If $G$ is a subgroup of $\mathrm{GL}(V)$, we denote by 
$$V^G:=\{v\in V\mid \forall g\in G,\, g(v)=v\}$$ 
the {\it invariant subspace}. For a cone $\mathcal{C}$ in $V$ that is preserved by $G$, we denote by $\mathcal{C}^G:=\mathcal{C}\cap V^G$ the {\it invariant cone}.

For a normal $\Q$-factorial complex projective variety $X$, we denote by $N^1(X)$ the finite dimensional real vector space of numerical equivalence classes of $\R$-divisors on $X$. On $X$, we say that a Cartier divisor $D$ is {\it movable} if its linear system $|D|$ is non-empty and has base locus of codimension at least $2$. In this space, we denote by $\Mov(X)$ the closure of the convex cone spanned by classes of movable Cartier divisors on $X$, and call it the {\it movable cone} of $X$. Unless otherwise stated, our lattice of choice in the Néron--Severi space $N^1(X)$ is the lattice of integral Weil divisors, which is preserved by the action of $\PsAut(X)$ and denoted by $N^1(X)_{\Z\mathrm{-Weil}}$. For any subgroup $H <\PsAut(X)$, we use the superscript $H^*$ to denote the image of $H$ under the representation
$$g\in \PsAut(X) \to g^*\in \mathrm{GL}(N^1(X)),$$
given by pullback.

Primitive symplectic varieties are introduced in \cite[Definition 3.1]{BL18}. Recall that a primitive symplectic variety is a normal compact Kähler variety $X$ with $h^1(\mathcal{O}_X) = 0$ and $H^0(X_{\rm reg},\Omega^2_{X_{\rm reg}})=\C\cdot \sigma$, where $\sigma$ is a symplectic form on $X_{\rm reg}$ that extends holomorphically (not necessarily as a symplectic form) to any resolution of singularities of $X$.

For a group $\Gamma$ and a subgroup $G<\Gamma$, we denote by
$$N_{\Gamma}(G):=\{\gamma\in\Gamma\mid \gamma G = G\gamma\},
\quad 
C_{\Gamma}(G):=\{\gamma\in\Gamma\mid \forall g\in G,\, \gamma g = g\gamma\}$$
the {\it normalizer} of $G$ in $\Gamma$ and the {\it centralizer} of $G$ in $\Gamma$, respectively.

\subsection{Self-dual homogeneous cones}

Throughout this section, a reference is \cite{FK94}.

\begin{definition}\label{def-selfdual}
Let $\mathcal{A}$ be an open non-degenerate cone in a finite dimensional real vector space $V$. 
We say that $\mathcal{A}$ is a {\it self-dual cone} if there exists a positive definite quadratic form ${\rm tr}:V\otimes V\to \R$ that induces an identification of $\mathcal{A}$ with its dual cone. 
We say that $\mathcal{A}$ is a {\it homogeneous cone} if the group of linear automorphisms of $V$ that preserve $\mathcal{A}$ acts transitively on $\mathcal{A}$.
We say that $\mathcal{A}$ is $\R$-{\it indecomposable} if, for any decomposition $V=V_1\oplus V_2$ into $\R$-linear subspaces with both $\mathcal{A}_i:=\mathcal{A}\cap V_i\neq \emptyset$, we have a strict inclusion
$\mathcal{A} \supsetneq \mathcal{A}_1 + \mathcal{A}_2.$
\end{definition}

\begin{example}\label{ex-hyp}
Let $n\ge 3$. Let $q$ be the standard quadratic form of signature $(1,n-1)$ on $\R^{n}$, that is
$$q:(x_1,\ldots,x_n)\in\R^n\mapsto x_1^2 - \sum_{i=2}^n x_i^2,$$
and set $h= (1,0,\ldots, 0)\in\R^n$. The non-degenerate open cone
$$\mathcal{H}_n:=\{v\in \R^{n}\mid q(v) > 0,\, q(h,v) > 0\}$$
is an $\R$-indecomposable self-dual homogeneous cone.

While the self-duality of $\mathcal{H}_n$ with respect to $q$ is well-known, Definition \ref{def-selfdual} requires us to prove that $\mathcal{H}_n$ is also self-dual with respect to a positive definite quadratic form. In fact, we show that the standard $L^2$-norm
$$\mathrm{tr}:(x_1,\ldots,x_n)\in\R^n\mapsto \sum_{i=1}^n x_i^2$$
is suitable. We follow the argument of \cite[Chapter I., Paragraph 5. (C)]{Koecher}. Note that the linear transformation
$\varphi:(x_1,x_2,\ldots,x_n)\mapsto (x_1,-x_2,\ldots,-x_n)$
fixes $h$, is involutive, $q$-orthogonal, and thus preserves the cone $\mathcal{H}_n$. It also satisfies 
$q(x,y)=\mathrm{tr}(x,\varphi(y))$ for all $x,y\in\R^n$. Therefore,
\begin{align*}
(\mathcal{H}_n)^{\vee_{\mathrm{tr}} }
&= \{v\in\R^n\mid \mathrm{tr}(v,w) > 0\; \forall\, w\in \mathcal{H}_n\} \\
&= \{v\in\R^n\mid q(v,x) > 0\; \forall\, x\in \mathcal{H}_n\} \\
&= (\mathcal{H}_n)^{\vee_{q}} \\
&=  \mathcal{H}_n.
\end{align*}

A cone that identifies, up to linear isomorphism, with $\mathcal{H}_n$ for some $n\ge 3$ is said to be {\it of hyperbolic type}, or {\it hyperbolic} for short.
\end{example}

A classification of $\R$-indecomposable self-dual homogeneous cones was achieved by the 1934 result of Jordan--von Neumann--Wigner \cite{JvNW} on the classification of formally real Jordan algebras, and by the 1970ies Koecher--Vinberg theorem stating an equivalence of categories between formally real Jordan algebras and self-dual homogeneous cones (see e.g. \cite{Koecher}, \cite[Theorem III.2.1 and Section III.3]{FK94}, and Theorem \ref{thm-koechervinberg} for a precise statement). Here we present the classification.

\begin{theorem}\label{thm-classification}
Let $\mathcal{A}$ be a self-dual homogeneous cone. Then there is a unique decomposition
$$\mathcal{A}=\bigoplus_{j=1}^r \mathcal{A}_j,$$
where each $\mathcal{A}_j$ is an $\R$-indecomposable self-dual homogeneous cone. Moreover, the $\R$-indecomposable self-dual homogeneous cones are classified as:
\begin{enumerate}
\item the positive halfline in $\R$;
\item the cone of positive definite, real symmetric $n$ by $n$ matrices for some $n\ge 3$;
\item the cone of positive definite, complex Hermitian $n$ by $n$ matrices for some $n\ge 3$;
\item the cone of positive definite, quaternionic Hermitian $n$ by $n$ matrices for some $n\ge 3$;
\item the hyperbolic cone $\mathcal{H}_n$ for some $n\ge 3$;
\item the cone of positive definite octonionic Hermitian $3$ by $3$ matrices.
\end{enumerate}
\end{theorem}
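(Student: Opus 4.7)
The plan is to reduce the statement to the classification of finite-dimensional formally real (Euclidean) Jordan algebras via the Koecher--Vinberg equivalence of categories, which the paper flags as Theorem \ref{thm-koechervinberg} in Section 2 and which I will treat as a black box. Under this equivalence, every self-dual homogeneous cone $\mathcal{A}\subset V$ arises, uniquely up to isomorphism, as the interior of the cone of squares of a formally real Jordan algebra structure on $V$; conversely, every such Jordan algebra produces a self-dual homogeneous cone. Direct sums of cones correspond to direct sums of Jordan algebras (as ideals), and the group of linear automorphisms preserving $\mathcal{A}$ corresponds to the group of Jordan algebra automorphisms (on the structure side).

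First I would establish the existence and uniqueness of the decomposition. Let $J$ be the formally real Jordan algebra corresponding to $\mathcal{A}$. I would check that $\R$-indecomposability of $\mathcal{A}$ translates exactly to simplicity of $J$: a Jordan ideal $J_1\subset J$ yields a complementary ideal $J_2$ (by formal reality, the orthogonal complement with respect to the trace form is an ideal), hence $V=V_1\oplus V_2$ and $\mathcal{A}=\mathcal{A}_1+\mathcal{A}_2$ with $\mathcal{A}_i$ the cone of squares of $J_i$. Conversely, a decomposition of $\mathcal{A}$ produces a pair of complementary ideals of $J$ via the Peirce decomposition associated to the identity elements of the summand algebras (which exist since each $\mathcal{A}_i$ is itself self-dual homogeneous). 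The uniqueness of the decomposition of a finite-dimensional formally real Jordan algebra into simple ideals then gives the uniqueness in the theorem.

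Next I would invoke the Jordan--von Neumann--Wigner classification \cite{JvNW} of simple formally real Jordan algebras: they are exhausted by $\R$, the Jordan algebras $\mathrm{Herm}_n(\mathbb{K})$ of Hermitian $n\times n$ matrices over $\mathbb{K}\in\{\R,\C,\mathbb{H}\}$ for $n\ge 3$, the Albert algebra $\mathrm{Herm}_3(\mathbb{O})$, and the spin factors $J(V,q):=\R\oplus V$ for $V$ equipped with a positive definite quadratic form $q$ of dimension $\ge 2$. Computing cones of squares in each case yields the six families in the statement: the cone of squares of $\mathrm{Herm}_n(\mathbb{K})$ is the cone of positive definite Hermitian matrices, while the cone of squares of $J(V,q)$ is precisely the forward light cone $\mathcal{H}_{\dim V+1}$ of the Lorentzian form $t^2-q(v)$, which is hyperbolic. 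The low-dimensional coincidences $\mathrm{Herm}_2(\mathbb{K})\simeq J(V,q)$ with $\dim V=1+\dim_{\R}\mathbb{K}$ justify restricting $n\ge 3$ in the matrix families and $n\ge 3$ in the hyperbolic family, so that no cone appears twice.

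The genuinely hard step is the Jordan--von Neumann--Wigner classification itself, which I would not reproduce. Its proof proceeds by choosing a Jordan frame of primitive idempotents in a simple formally real Jordan algebra $J$, performing the Peirce decomposition $J=\bigoplus_{i\le j} J_{ij}$, and showing that the off-diagonal pieces $J_{ij}$ for $i<j$ carry the structure of a normed composition algebra over $\R$; by Hurwitz's theorem, this composition algebra must be $\R$, $\C$, $\mathbb{H}$ or $\mathbb{O}$, with the octonionic case forced into rank $3$ (as otherwise associativity fails to be compatible with rank $\ge 4$ coordinatization). The rank-$2$ case falls out separately and yields the spin factors. All subsequent arguments in the present paper rely only on the statement of Theorem \ref{thm-classification}, so I would quote \cite{JvNW} and \cite[Theorem V.3.7]{FK94} rather than reprove it.
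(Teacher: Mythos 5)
Your proposal is correct and follows exactly the route the paper indicates: the paper states Theorem \ref{thm-classification} without proof, attributing it to the Koecher--Vinberg equivalence (quoted as Theorem \ref{thm-koechervinberg}) combined with the Jordan--von Neumann--Wigner classification of simple formally real Jordan algebras in \cite{JvNW} and \cite{FK94}. Your translation of $\R$-indecomposability to simplicity, the cone-of-squares computations, and the bookkeeping of the low-rank isomorphisms $\mathrm{Herm}_2(\mathbb{K})\simeq J(V,q)$ that explain the $n\ge 3$ restrictions are all accurate and match the standard argument the paper delegates to the references.
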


We also mention the following result, usually attributed to Koecher and Vinberg.

\begin{proposition}
Let $\mathcal{A}$ be a self-dual homogeneous cone. Then the Lie group $\Aut(\mathcal{A})$ has finitely many components, and its identity component $\Aut^{\circ}(\mathcal{A})$ is a connected Lie group realised as the real points of an algebraic group.
\end{proposition}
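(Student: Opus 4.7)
The plan is to reduce to the case of an $\R$-indecomposable self-dual homogeneous cone via Theorem \ref{thm-classification}, and then to verify the statement by a case-by-case inspection of the six families listed there.

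First, I would write $\mathcal{A} = \bigoplus_{j\in J} \mathcal{A}_j$ in its unique decomposition into $\R$-indecomposable self-dual homogeneous summands. Any $\gamma\in \Aut(\mathcal{A})$ must permute the summands $\mathcal{A}_j$, and only summands in the same isomorphism class can be swapped; this follows from the uniqueness in Theorem \ref{thm-classification}, together with the observation that the image of an $\R$-indecomposable self-dual homogeneous cone under a linear automorphism is again such a cone. This yields a homomorphism $\Aut(\mathcal{A}) \to \mathfrak{S}$ onto a finite subgroup $\mathfrak{S}$ of the symmetric group of $J$, whose kernel is the product $\prod_{j\in J} \Aut(\mathcal{A}_j)$. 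Provided each $\Aut(\mathcal{A}_j)$ has finitely many components with algebraic identity component, the same follows for $\Aut(\mathcal{A})$ itself.

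Next, I would invoke the classification and treat the six families one by one. For $\R_{>0}\subset \R$, $\Aut(\mathcal{A}) = \R_{>0}$. For the cone of positive-definite real symmetric, complex Hermitian, or quaternionic Hermitian $n\times n$ matrices, $\Aut(\mathcal{A})$ is the congruence action $A\mapsto gAg^*$ of $\mathrm{GL}_n(\R)$, $\mathrm{GL}_n(\C)$, or $\mathrm{GL}_n(\mathbb{H})$ modulo scalar kernel, possibly extended by a finite group of involutions coming from automorphisms of the base field. For the hyperbolic cone $\mathcal{H}_n$, one has $\Aut(\mathcal{A}) = \R_{>0}\times \mathrm{O}^+(1,n-1)$. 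For the exceptional octonionic cone, $\Aut^\circ(\mathcal{A})$ is a non-compact real form of $E_6$. In each case $\Aut(\mathcal{A})$ has finitely many connected components and the identity component is realised as the real points of a connected linear algebraic $\R$-group; these are classical computations detailed in \cite{FK94}.

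The main obstacle in this approach is cosmetic rather than mathematical: the case-by-case verification is mechanical but long, and keeping track of the extra components arising from base-field automorphisms in the matrix cases requires some care. A more uniform alternative would be to invoke the Koecher--Vinberg equivalence (stated later as Theorem \ref{thm-koechervinberg}) to identify $\mathcal{A}$ with the cone of squares in a Euclidean Jordan algebra $V$, so that $\Aut^\circ(\mathcal{A})$ coincides with the identity component of the structure group $\mathrm{Str}(V)$, which is algebraic over $\R$ by construction; the component group is then controlled by the compact group of Jordan algebra automorphisms fixing the identity element of $V$. This second route bypasses the classification at the cost of invoking heavier Jordan algebra machinery.
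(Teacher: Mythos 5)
The paper offers no proof of this proposition: it is stated as a classical fact attributed to Koecher and Vinberg, with \cite{FK94} and \cite{AMRT} as the ambient references, so there is no internal argument to compare yours against. That said, your proposal is a correct reconstruction. The reduction to $\R$-indecomposable summands is sound: uniqueness of the decomposition in Theorem \ref{thm-classification} forces any automorphism to permute the summands, and the kernel of the resulting permutation representation is the block-diagonal product $\prod_{j}\Aut(\mathcal{A}_j)$, since an automorphism preserving each $\mathcal{A}_j$ setwise preserves its linear span. The case-by-case verification is standard, but note that the classification you invoke is itself proved via the Jordan-algebra machinery, so this route is not actually lighter than your alternative. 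Your second route is the standard one and the closest to the cited sources: realising $\mathcal{A}$ as the cone of squares of a Euclidean Jordan algebra $V$, the structure group $\mathrm{Str}(V)$ is cut out by the polynomial identities satisfied by the quadratic representation, hence is a real algebraic group with finitely many components (Whitney), and $\Aut(\mathcal{A})$ is a union of components of it, being the stabiliser of one connected component of the set of invertible elements; alternatively, the component count follows from compactness of $\mathrm{Stab}_{\Aut(\mathcal{A})}(e)$ together with contractibility of $\mathcal{A}\cong\Aut(\mathcal{A})/\mathrm{Stab}(e)$. One caveat, which is an imprecision in the statement rather than in your proof: already for the half-line, $\Aut^{\circ}(\R_{>0})=\R_{>0}$ is the identity component of $\GL_1(\R)=\R^{\times}$ and not literally the full set of real points of an algebraic group, so the proposition should be read as asserting that $\Aut^{\circ}(\mathcal{A})$ is the identity component of the real points of a linear algebraic group --- which is also the form in which it is used later, when requiring that this algebraic group be defined over $\Q$.
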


We often work with a preferred lattice, or rational structure on the real vector space $V$. The following definition ensures compatibility of a self-dual homogeneous cone with a given rational structure; We follow \cite[Sections 3, 4]{AMRT}.

\begin{definition}\label{def-compatiblerat}
Let $\mathcal{A}$ be a self-dual homogeneous cone in a finite dimensional real vector space $V$. Let $V_{\Q}$ be a rational structure in $V$. We say that $\mathcal{A}$ is {\it compatible with $V_{\Q}$} if the algebraic group $\Aut^{\circ}(\mathcal{A})$ is defined over $\Q$. For short, we may write that $\mathcal{A}$ be a self-dual homogeneous cone in $V=V_{\Q}\otimes \R$ to indicate compatibility between $\mathcal{A}$ and $V_{\Q}$.
\end{definition}

\begin{remark}\label{rem-rho2}
In $V=\R^2$ endowed with its usual rational structure, any non-degenerate open cone can be written as the convex hull of two positive halflines:
$$\mathcal{A}=\mathrm{Conv}_{\R}(R_1,R_2).$$
Such a cone is always self-dual and homogeneous. However, it is compatible with the rational structure if and only if one of the following two situations occurs:
\begin{itemize}
\item $R_1$ and $R_2$ are both rational halflines,
\item or $R_1$ and $R_2$ are both defined over a quadratic extension $K$ of $\Q$, and exchanged by the Galois conjugation of $K/\Q$.
\end{itemize}
\end{remark}

Consider a self-dual homogeneous cone $\mathcal{A}$ compatible with a rational structure $V_{\Q}$. Remark \ref{rem-rho2} shows that the $\R$-indecomposable summands of $\mathcal{A}$ do not themselves need to be compatible with $V_{\Q}$. Nevertheless, we now show that adequate partial sums of the $\R$-indecomposable summands of $\mathcal{A}$ are compatible with $V_{\Q}$.

\begin{remark}\label{rem-ex34} In a real finite-dimensional vector space $V_{\R}$, consider a self-dual homogeneous cone $\mathcal{A}$ of the form
$$\mathcal{A}=\mathcal{B}^{\oplus n}\oplus \mathcal{D},$$
for an $\R$-indecomposable self-dual homogeneous cone $\mathcal{B}$, an integer $n\ge 1$, and a self-dual homogeneous cone $\mathcal{D}$ that does not admit any summand isomorphic to $\mathcal{B}$.
If $\mathcal{A}$ is compatible with some rational structure $V_{\Q}$ on $V_{\R}$, then, by uniqueness of the decomposition of $\mathcal{A}$, the summands $\mathcal{B}^{\oplus n}$ and $\mathcal{D}$ are both defined over $V_{\Q}$.

Of course, we may further decompose $\mathcal{D}$ and iterate the argument. %Going back, the direct sum of self-dual homogeneous cones defined over $V_{\Q}$ is itself defined over $V_{\Q}$. %%Note that the individual summands of the form $\mathcal{B}$ do not need to each be defined over $V_{\Q}$; see Remark \ref{rem-rho2}.
In particular, if $\mathcal{A}$ is compatible with $V_{\Q}$, the sum of its non-hyperbolic summands is also compatible with $V_{\Q}$.
\end{remark}

The compatibility of $\mathcal{A}$ with the rational structure $V_{\Q}$ will be important when we construct \textbf{rational} polyhedral fundamental domains later on, e.g., in Theorem \ref{thm-amrt} and Propositions \ref{prop-characterize-RPFD}, \ref{prop-descentwellclipped}.

The next lemma is written with a lattice $V_{\Z}$; the corresponding rational structure is of course $V_{\Q}:=V_{\Z}\otimes \Q$.

\begin{lemma}\label{lem-quadratic}
Let $\mathcal{A}$ be a self-dual homogeneous cone in $V=V_{\Z}\otimes \R$. There exists a positive definite $V_{\Z}$-integral quadratic form $\mathrm{tr}$ on $V$ that is $\Aut(\mathcal{A},V_{\Z})$-invariant, and with respect to which $\mathcal{A}$ is self-dual. 
%%There also exists a $V_{\Z}$-integral and $\mathrm{Aut}(\mathcal{A},V_{\Z})$-invariant quadratic form $q$ on $V$ that splits as a direct sum of hyperbolic forms on the linear spans of hyperbolic summands of $\mathcal{A}$, and positive definite quadratic froms on the other summands' spans.
\end{lemma}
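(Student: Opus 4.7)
I would construct $\mathrm{tr}$ as the trace form of a Jordan algebra structure on $V$, using the Koecher--Vinberg equivalence of categories between self-dual homogeneous cones and formally real Jordan algebras (stated as Theorem~\ref{thm-koechervinberg} later in the paper).

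Specifically, Koecher--Vinberg endows $V$ with a formally real Jordan product $\cdot$ whose cone of squares is $\mathcal{A}$. The compatibility of $\mathcal{A}$ with $V_{\Q}$ ensures that this product may be chosen $V_{\Q}$-rational, and after rescaling the lattice by a positive integer (harmless for the statement), I would further arrange that the product is $V_{\Z}$-integral, with Jordan unit $e \in V_{\Z}$. I would then set
\[
\mathrm{tr}(x,y):=\mathrm{Tr}_V(L_{x\cdot y}),
\]
where $L_z:V\to V$ denotes left Jordan multiplication $w\mapsto z\cdot w$. By standard results on formally real Jordan algebras (cf.\ \cite[Chapters~II--III]{FK94}), $\mathrm{tr}$ is symmetric positive definite, and $\mathcal{A}$ is self-dual with respect to it. Its $V_{\Z}$-integrality follows immediately from the $V_{\Z}$-integrality of $\cdot$.

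The main obstacle is the $\Aut(\mathcal{A},V_{\Z})$-invariance. A general $g\in\Aut(\mathcal{A},V_{\Z})$ need not fix the Jordan unit $e$, so $g$ is typically not a Jordan algebra automorphism and the invariance of $\mathrm{tr}$ is not immediate. To handle this, I would decompose $\mathcal{A}=\bigoplus_j\mathcal{A}_j$ into $\R$-indecomposable self-dual homogeneous summands via Theorem~\ref{thm-classification}, observe that $\Aut(\mathcal{A},V_{\Z})$ permutes these summands through a finite quotient (since there are finitely many), and on each summand acts by an integral linear automorphism preserving $\mathcal{A}_j$. The rigidity of the self-dualizing positive definite form on an $\R$-indecomposable cone (it is unique up to positive real scaling) combined with the $V_{\Z}$-integrality condition pins down the trace-summand canonically on each $\mathcal{A}_j$; summing over the finite permutation action of $\Aut(\mathcal{A},V_{\Z})$ on the summand indices would then produce a globally invariant form. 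The most delicate point is the per-summand invariance under the potentially infinite integer automorphism group of an indecomposable factor, where one must use that preserving both $\mathcal{A}_j$ and $V_{j,\Z}$ constrains the scaling ambiguity of the trace form to a discrete character which is trivialized after the integral normalization.
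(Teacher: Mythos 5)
Your approach is genuinely different from the paper's. The paper's proof is a two-line citation: it invokes \cite[3.1, Paragraph 2]{AMRT} for the existence of a $V_{\Q}$-rational positive definite self-dualizing form on which $\Aut(\mathcal{A})$ acts by scaling, then takes an integer multiple and (implicitly) uses a determinant argument to see that the scaling character is trivial on the lattice-preserving subgroup. You instead construct the form explicitly as the Jordan trace form via Koecher--Vinberg and handle invariance by decomposing into $\R$-indecomposable summands, using uniqueness of the self-dualizing form up to scaling on each summand. Both routes are legitimate, and it is pleasant that you use Koecher--Vinberg here since the paper already invokes it (Theorem~\ref{thm-koechervinberg}) for other purposes; but yours is considerably longer to make rigorous than the paper's citation-based argument.

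That said, a few points in your sketch would need care to close. First, ``rescaling the lattice by a positive integer'' to make the Jordan product $V_{\Z}$-integral is a roundabout (and slightly misleading) framing: you cannot rescale a Jordan product while keeping it a Jordan product, and an $nV_{\Z}$-integral form is not a $V_{\Z}$-integral form. The clean statement is simply that $\mathrm{tr}$ is $V_{\Q}$-rational because the product is, and one multiplies by a suitable positive integer at the very end. Second, the phrase ``summing over the finite permutation action \dots would then produce a globally invariant form'' does not do what you want: if the per-summand forms are already pinned down, you do not sum, you check that the pinned-down sum is invariant; what one actually needs is to normalize each $q_j$ as the primitive $V_{j,\Z}$-integral self-dualizing form on the sublattice $V_{j,\Z}:=V_{\Z}\cap\mathrm{Span}(\mathcal{A}_j)$, observe that any $g\in\Aut(\mathcal{A},V_{\Z})$ permutes the summands by lattice isomorphisms, and conclude $g^*q_{\sigma(j)}=q_j$ by comparing discriminants (which also handles your ``delicate point''). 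Third, the sublattices $V_{j,\Z}$ may sum to a proper finite-index sublattice of $V_{\Z}$, so the form $\sum q_j$ is only $\bigoplus V_{j,\Z}$-integral a priori; again a final integer multiple fixes this without disturbing invariance. With these repairs your proof goes through.
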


\begin{proof}
The existence of a positive definite form ${\rm tr}$ that is $V_{\Q}$-rational, on which $\Aut(\mathcal{A})$ acts by scaling, and with respect to which $\mathcal{A}$ is self-dual is due to standard facts of Lie theory, see for instance \cite[3.1, Paragraph 2]{AMRT} for an explanation. A large enough multiple of it is $V_{\Z}$-integral and $\Aut(\mathcal{A},V_{\Z})$-invariant.
\end{proof}

\subsection{Round and simplicial parts}

We introduce a few {\it ad hoc} definitions.

\begin{definition}\label{def-rdpart}
Let $\mathcal{A}$ be a self-dual homogeneous cone. 
We define the {\it round part} of $\mathcal{A}$, denoted by $\rd \mathcal{A}$, to be the cone obtained by removing all halfline summands of the decomposition of $\mathcal{A}$ given by Theorem \ref{thm-classification}. 
We define the {\it simplicial part} of $\mathcal{A}$ as the sum of all halfline summands of $\mathcal{A}$ in that same decomposition.
\end{definition}

\begin{remark}
The round part and the simplicial part of a self-dual homogeneous cone both are self-dual homogeneous cones in their own linear span.
\end{remark}

The next definition defines a preorder on the set of simplicial cones in a vector space with a preferred lattice. Crucially, we do not assume our simplicial cones to be full-dimensional in this definition.

\begin{definition}\label{def-simplicialpart-preorder}
Let $\Sigma$ and $\Xi$ be two simplicial cones in $V=V_{\Z}\otimes \R$. We say that $\Sigma$ {\it rules} $\Xi$ if the following two assumptions hold
\begin{enumerate}
\item the linear spans of $\Sigma$ and $\Xi$ in $V$ coincide;
\item there is a finite index subgroup in $\Aut(\Xi,V_{\Z})$ that preserves the cone $\Sigma$.
\end{enumerate}
\end{definition}

\begin{remark}\label{rem-rules} Let us state a few properties of this relation.
\begin{itemize}
\item Any simplicial cone clearly rules itself.
\item The relation that a simplicial cone rules another simplicial cone is transitive.
\item A rational simplicial cone is ruled by any simplicial cone with the same linear span. Indeed, if $\Xi$ is a rational simplicial cone with $k$ extremal rays, the group
$\Aut(\Xi,V_{\Z})$ acts faithfully by permutation on the primitive elements spanning the $k$ extremal rays, thus its order divides the order of the $k$-th symmetric group: This shows that $\Aut(\Xi,V_{\Z})$ is finite, and thus that the trivial group has finite index in $\Aut(\Xi,V_{\Z})$.
\item The relation that a full-dimensional simplicial cone rules another full-dimensional simplicial cone is stable under direct sum; see Lemma \ref{lem-dirsumrules}.
\item Let $\Sigma$ be a simplicial cone in $\R^2=\Z^2\otimes \R$ spanned by two irrational rays. Let $\Xi$ be a cone spanned by one of the extremal rays $R$ of $\Sigma$ and by a rational ray $R_e$.
Then $\Sigma$ rules $\Xi$. Indeed, 
consider an element $\tau\in \Aut(\Xi,\Z^2)$. Note that $\tau$ cannot exchange $R_e$ with $R$, thus $\tau$ preserves $R_e$ and acts trivially on it, and preserves $R$ and acts by scaling by a scalar $\lambda > 0$ on it. But $\tau$ preserves the lattice $\Z^2$, hence has determinant $\pm 1$. This implies $\lambda= 1$, hence $\tau$ is the identity. So the group $\Aut(\Xi,\Z^2)$ is trivial, and $\Sigma$ indeed rules $\Xi$.
\end{itemize}
\end{remark}

\begin{lemma}\label{lem-dirsumrules}
Let $V$ be a finite-dimensional vector space with a rational decomposition $V_{\Q}=V_{1,\Q}\oplus V_{2,\Q}$. For $i=1,2$, let $\Sigma_i$ and $\Xi_i$ be two full-dimensional simplicial cones in $V_{i}$ and assume that $\Sigma_i$ rules $\Xi_i$. Then, the direct sum $\Sigma_1\oplus \Sigma_2$ rules the direct sum $\Xi_1\oplus \Xi_2$ in $V$.
\end{lemma}

\begin{proof}
Item (1) of Definition \ref{def-simplicialpart-preorder} clearly holds. Item (2) for the direct sums will follow from Item (2) for the individual factors, once we prove that the inclusion
\begin{equation}\label{noname}
\Aut(\Xi_1,V_{1,\Z})\times \Aut(\Xi_2,V_{2,\Z})  < \Aut(\Xi_1\oplus \Xi_2,V_{1,\Z}\oplus V_{2,\Z})
\end{equation}
is of finite index. 

Consider the set $\mathcal{R}$ of extremal rays of the cone $\Xi_1\oplus \Xi_2$, on which the group $\Aut(\Xi_1\oplus \Xi_2,V_{\Z})$ acts by permutation. Since $\mathcal{R}$ is finite, the kernel of this action, that is the subgroup
$$\Gamma:= \{\phi\in \Aut(\Xi_1\oplus \Xi_2,V_{\Z})\mid \phi(R)=R\;\;\forall R\in\mathcal{R}\}$$
is of finite index in $\Aut(\Xi_1\oplus \Xi_2,V_{\Z})$. We also note that $\Gamma$ preserves any cone spanned by a subset of the extremal rays of $\Xi_1\oplus \Xi_2$, and in particular preserves both cones $\Xi_i$ for $i=1,2$. This realizes $\Gamma$ as a subgroup of 
\begin{align*}
\Aut(\Xi_1,\Xi_2,V_{\Z}) &= \Aut(\Xi_1,V_{\Z}\cap\mathrm{Span}_{\R}\Xi_1)\times \Aut(\Xi_2,V_{\Z}\cap\mathrm{Span}_{\R}\Xi_2) \\
&= \Aut(\Xi_1,V_{1,\Z})\times \Aut(\Xi_2,V_{2,\Z})
\end{align*} and shows that the inclusion \eqref{noname} is indeed of finite index.
\end{proof}

These notions will later appear in Proposition \ref{prop-descentwellclipped} and Lemma \ref{lem-rdpartfunddom}.

\subsection{Formally real Jordan algebras}

As it will be used later, we also state the Koecher--Vinberg theorem, a key ingredient in the classification stated in Theorem \ref{thm-classification}, which provides an equivalence of categories between self-dual homogeneous cones and a certain class of algebraic objects. We loosely follow \cite{Koecher}.

\begin{definition}
A {\it Jordan algebra} is a finite dimensional $\R$-vector space $V$, an element $e\in V$, and a bilinear operation $\circ: V\times V\to V$ such that
\begin{itemize}
\item $\circ$ is commutative;
\item $e$ is a neutral element for $\circ$;
\item for all $x,y\in V$, the Jordan identity $(x\circ x)\circ (x\circ y) = x\circ ((x\circ x) \circ y)$ holds.
\end{itemize}

\noindent A Jordan algebra $(V,e,\circ)$ is called {\it formally real} if for all $x_1,\ldots,x_k\in V$,
we have $$x_1\circ x_1 + \ldots + x_k\circ x_k = 0 \Longrightarrow x_1 = \ldots = x_k = 0.$$

\noindent A Jordan algebra $(V,e,\circ)$ is called {\it simple} if any linear subspace of $V$ that is preserved by multiplication by $V$ is equal to $\{0\}$ or $V$.
\end{definition}

\begin{remark}
The notion of a formally real Jordan algebra is closed under isomorphisms, subalgebras, and direct products.
\end{remark}

The next result is due to Koecher and Vinberg, see for instance \cite[Theorem 15]{Koecher}.

\begin{theorem}\label{thm-koechervinberg}
There is an isomorphism of categories between formally real Jordan algebras and pointed self-dual homogeneous cones, given on objects by
$$F:(V,e,\circ) \mapsto (\mathcal{A},e) \quad \mbox{where }\mathcal{A}:=\{x\circ x\mid x\in V\}^{\circ},$$
and on morphisms by
$F: \phi \mapsto \phi.$ %This functor $F$ is injective on objects. %and sends simple formally real Jordan algebras to $\R$-indecomposable self-dual homogeneous cones.
\end{theorem}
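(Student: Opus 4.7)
The plan is to construct an explicit inverse functor $G$ from pointed self-dual homogeneous cones to formally real Jordan algebras, and then to check that $F$ and $G$ are mutually inverse while both acting as the identity on morphisms.

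\textbf{Forward direction.} To show that $F(V, e, \circ) = (\mathcal{A}, e)$ is a pointed self-dual homogeneous cone, I would first establish the spectral theorem for formally real Jordan algebras: every $x\in V$ decomposes as $x = \sum_i \lambda_i c_i$ with $\lambda_i \in \R$ and $c_i$ pairwise orthogonal idempotents summing to $e$. This follows by induction on $\dim V$ using the minimal polynomial of $x$, whose roots must be real because of formal reality. Consequently, the set $\mathcal{A}$ of squares consists of the elements whose spectral values are non-negative, and its interior is a full-dimensional, open, convex, non-degenerate cone containing $e$. The trace form $\mathrm{tr}(x, y) := \mathrm{Tr}(L_{x\circ y})$ (with $L_x$ denoting left multiplication) is symmetric by commutativity of $\circ$, positive definite by formal reality applied to a spectral basis, and identifies $\mathcal{A}$ with its dual. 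Homogeneity comes from the quadratic representation $P(x) := 2 L_x^2 - L_{x\circ x}$: a standard consequence of the Jordan identity is that $P(x)$ lies in $\Aut(\mathcal{A})$ whenever $x$ is in the interior of $\mathcal{A}$, and $P(x^{1/2})\, e = x$ gives transitivity by spectral calculus.

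\textbf{Reverse direction.} For $(\mathcal{A}, e)$ a pointed self-dual homogeneous cone with duality form $\mathrm{tr}$, I would reconstruct the Jordan product from the characteristic function
$$\phi(x) := \int_{\mathcal{A}} e^{-\mathrm{tr}(x, \xi)}\, d\xi \qquad (x \in \mathcal{A}),$$
which is smooth, strictly log-convex, and $\Aut(\mathcal{A})$-semi-invariant (it transforms by the Jacobian determinant). Setting $\psi := -\log \phi$, the Hessian $D^2 \psi(e)$ recovers $\mathrm{tr}$ up to a positive scalar, and the symmetric trilinear form $D^3 \psi(e)$ dualizes via $\mathrm{tr}$ to a commutative bilinear product $\circ$ on $V$ for which $e$ is a two-sided unit (by invariance of $\phi$ under the isotropy at $e$) and which is formally real (by positivity of $\phi$).

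\textbf{Main obstacle.} The hardest step is verifying the Jordan identity for the product reconstructed from $\phi$. I would exploit the symmetric-space structure of $\mathcal{A}$: by self-duality and homogeneity, $\Aut^{\circ}(\mathcal{A})$ is a reductive Lie group admitting a Cartan decomposition $\mathfrak{g} = \mathfrak{k} \oplus \mathfrak{p}$, where $\mathfrak{k}$ is the Lie algebra of the isotropy of $e$ and $\mathfrak{p}$ is identified with $V$ via the orbit map. The triple product $\{x, y, z\} := [[x, \theta y], z]$ attached to the Cartan involution $\theta$ then satisfies $\{x, e, y\} = 2 x \circ y$, and the Jordan identity translates into the Jacobi identity in $\mathfrak{g}$. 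Once both functors are in place, $F \circ G = \mathrm{id}$ and $G \circ F = \mathrm{id}$ reduce to matching $(e, \mathrm{tr}, \circ)$ across the two constructions: this uses the explicit evaluation $\phi(x) = c \cdot \det(L_x)^{-n/\mathrm{rk}(V)}$ on $F(V, e, \circ)$ and a direct comparison of derivatives at $e$. Functoriality on morphisms is then automatic, since in both categories morphisms are $\R$-linear maps with extra compatibility, and a linear map preserves the Jordan product if and only if it sends squares to squares while fixing the basepoint.
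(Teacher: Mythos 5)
The paper does not prove this statement: it is the classical Koecher--Vinberg theorem, quoted with a citation to \cite{Koecher} (and implicitly to \cite[Theorem III.2.1, Section III.3]{FK94}), so there is no in-paper argument to compare yours against. Your sketch is, in substance, the standard proof from that literature: the forward direction via the spectral theorem, the positive definite associative trace form, and transitivity of the quadratic representation $P(x)=2L_x^2-L_{x\circ x}$ on the interior of the cone of squares is exactly Faraut--Kor\'anyi's Theorem III.2.1, and the reverse direction via the characteristic function $\phi$ and the Cartan decomposition $\mathfrak{g}=\mathfrak{k}\oplus\mathfrak{p}$ with $\mathfrak{p}\cong V$ is Vinberg's and Koecher's construction. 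One small point of hygiene that the paper itself glosses over and that you handle correctly: the set $\{x\circ x\mid x\in V\}$ is the \emph{closed} cone of squares, whereas the paper's self-dual homogeneous cones are open, so one must pass to the interior (equivalently, to invertible squares).

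The one place where your sketch under-reports the difficulty is the assertion that ``the Jordan identity translates into the Jacobi identity in $\mathfrak{g}$.'' The Jacobi identity alone does not yield the Jordan identity for the product $x\circ y:=L(x)y$ with $L(x)\in\mathfrak{p}$. The actual argument needs three separate inputs: commutativity comes from $[\mathfrak{p},\mathfrak{p}]\subset\mathfrak{k}$ together with $\mathfrak{k}\cdot e=0$; self-duality is used to show each $L(x)$ is symmetric for the $K$-invariant inner product; and the Jordan identity in the equivalent form $[L(x),L(x\circ x)]=0$ is then deduced from the fact that elements of $[\mathfrak{p},\mathfrak{p}]\subset\mathfrak{k}$ act as derivations of $\circ$, via a polarization argument. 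You correctly identify this as the main obstacle, but the reduction you name would not close it as stated. Similarly, the morphism-level claim that a basepoint-preserving linear map sending squares to squares automatically preserves $\circ$ is exactly the naturality of the reconstruction and deserves the same care. None of this changes the verdict: the theorem is true, your architecture is the right one, and for the purposes of this paper a citation suffices.
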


\begin{remark}\label{rem-koechervin}
We mention some aspects this correspondence, as they will be used later on in Lemma \ref{lem-transitivecentralizer}.

Let $(V,e,\circ)$ be a formally real Jordan algebra.
We consider the following closed subset of $V$:
$$\overline{\mathcal{A}} := \{x\circ x\mid x\in V\}.$$
This subset contains $e$ and is clearly stable under multiplication by non-negative (real) scalars. The fact that $V$ is formally real shows that $\overline{\mathcal{A}}$ contains no line. 
One can show that $\overline{\mathcal{A}}$ is a full-dimensional cone in $V$. The crux of the argument is the proof that $\overline{\mathcal{A}}$ is closed under addition, which follows from a spectral theorem for Jordan algebras (\cite[Theorem III.1.1]{FK94}). Let us sketch the argument.
For every $v\in V$, we denote the corresponding \textit{left-multiplication operator} by $L(v):V\to V$. The fact that $V$ is formally real is equivalent to the fact that the symmetric bilinear form
$$\mathrm{tr}(x,y) := \mathrm{tr}(L(x\circ y))$$
is positive definite (\cite[Theorem 12]{Koecher}) on $V$. With respect to this form, every left-multiplication operator $L(v)$ is self-adjoint. It is proven in \cite[Theorem III.2.1]{FK94} that 
$$\overline{\mathcal{A}}=\{v\in V\mid L(v)\mbox{ positive semi-definite}\}.$$
The fact that the set of positive semi-definite symmetric real matrices is closed under addition thus implies that $\overline{\mathcal{A}}$ is closed under addition.

The interior $\mathcal{A}$ of $\overline{\mathcal{A}}$ has a nice description:
$$\mathcal{A} = \{v\in V\mid L(v)\mbox{ positive definite}\}  = \{ b\circ b\mid b\in V,\, b\mbox{ invertible}\},$$
where an element $b\in V$ is \textit{invertible} if there exists $y\in\R[b]$ such that $b\circ y = e$ (see \cite[Page 30 before Proposition II.2.3 and Page 48 last line of Proof of Theorem III.2.1]{FK94}). This cone is self-dual with respect to the positive definite form $\mathrm{tr}$. It is also homogeneous: Indeed, for any $a\in\mathcal{A}$, letting $b$ be an invertible element such that $a = b\circ b$, the \textit{quadratic operator} 
$$Q(b):=2L(b)^2-L(b\circ b)$$
is an invertible linear endomorphism of $V$ (\cite[Proposition II.3.1]{FK94}), preserves the cone $\mathcal{A}$ (\cite[Proposition III.2.2]{FK94}), and sends $e$ to $a$.
\end{remark}

\begin{remark}
If we decide to not keep track of the neutral element of Jordan algebras, and want to define a functor $\overline{F}$ from the category of formally real Jordan algebras into the category of self-dual homogeneous cones, it remains an equivalence of categories, but fails to be injective on objects.
\end{remark}

\subsection{Useful facts from birational geometry}

This subsection gathers a few facts from birational geometry, which we will mostly use in the proof of Theorem \ref{thm-ccforexplicitvars}. Our first lemma is very close to a result of Xu for the cone $\Mov^{\rm e}$ \cite[Theorem 24, Remark on Page 26]{Fulin} and Gachet-Lin-Stenger-Wang for the cone $\Nef^{\rm e}$ \cite[Proposition 1.6]{GLSW24}. We state it and prove it here for the cones $\Mov^+$ and $\Nef^+$, under somewhat weaker assumptions than in \cite[Theorem 24]{Fulin}.

\begin{lemma}\label{lem-descent}
Let $f:(X,\Delta)\to (Y,\Delta_Y)$ be a crepant birational morphism of klt Calabi--Yau pairs. Assume that the pair $(X,\Delta)$ satisfies the movable cone conjecture stated as in Conjecture \ref{conj-cc}. Then the pair $(Y,\Delta_Y)$ satisfies the movable cone conjecture.
\end{lemma}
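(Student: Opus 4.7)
The plan is to identify $\Mov^+(Y)$ with a rational linear section of $\Mov^+(X)$ via the pullback $f^*$, to lift pseudo-automorphisms of $(Y,\Delta_Y)$ to pseudo-automorphisms of $(X,\Delta)$, and to carve a rational polyhedral fundamental domain for $\PsAut(Y,\Delta_Y)$ out of one for $\PsAut(X,\Delta)$.

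First, I would check that $f^*: N^1(Y)\hookrightarrow N^1(X)$ is an injective $\Q$-linear map. Setting $V := f^*N^1(Y)$, I would then verify that $f^*$ restricts to a linear isomorphism
$$f^*: \Mov^+(Y)\xrightarrow{\sim}\Mov^+(X)\cap V.$$
The key inputs are that $f$ is an isomorphism in codimension $1$ (a standard consequence of Kawamata's theorem applied to any birational map between $\Q$-factorial klt Calabi--Yau pairs) and that both $f^*$ and $f_*$ therefore preserve movability; compatibility with the $^+$-operation follows from the $\Q$-linearity of $f^*$.

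Second, I would show that every $g\in\PsAut(Y,\Delta_Y)$ lifts to a pseudo-automorphism $\tilde g := f^{-1}\circ g\circ f\in\PsAut(X,\Delta)$. The composition $\tilde g$ is a crepant birational self-map of the $\Q$-factorial klt Calabi--Yau pair $(X,\Delta)$, and any such self-map is automatically an isomorphism in codimension $1$. This yields an injective group homomorphism $\iota: \PsAut(Y,\Delta_Y)\hookrightarrow\PsAut(X,\Delta)$ whose image preserves $V$ and acts on $V$ compatibly, via $f^*$, with the natural $\PsAut(Y,\Delta_Y)$-action on $N^1(Y)$.

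Third, starting from a rational polyhedral fundamental domain $\Pi_X\subset\Mov^+(X)$ for $\PsAut(X,\Delta)$, the covering $\Mov^+(X) = \bigcup_{g\in\PsAut(X,\Delta)} g\Pi_X$ restricts to a covering
$$\Mov^+(X)\cap V = \bigcup_{g\in\PsAut(X,\Delta)}(g\Pi_X\cap V)$$
by rational polyhedral pieces that are permuted by $\iota(\PsAut(Y,\Delta_Y))$. Gluing finitely many orbit representatives would then yield a rational polyhedral fundamental domain $\Pi_Y$ for $\PsAut(Y,\Delta_Y)$ on $\Mov^+(Y)$. I expect the decisive difficulty to be proving that there are only \emph{finitely many} $\iota(\PsAut(Y,\Delta_Y))$-orbits among the pieces $g\Pi_X\cap V$: this should follow from a Looijenga-style argument, since $\Pi_X$ has only finitely many faces, and any $g\in\PsAut(X,\Delta)$ for which $g\Pi_X\cap V$ is full-dimensional in $V$ is constrained by a finite combinatorial datum, for instance the face of $\Pi_X$ that meets $g^{-1}V$ in full dimension, which bounds the number of relevant double cosets.
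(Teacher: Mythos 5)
Your argument is built on a picture that does not match the hypotheses. Here $f$ is a crepant birational \emph{morphism} of klt Calabi--Yau pairs; since $X$ and $Y$ are both $\Q$-factorial (by the paper's definition of a pair), if $f$ were an isomorphism in codimension $1$ it would be an isomorphism and there would be nothing to prove. The entire content of the lemma is the case where $f$ contracts divisors --- in the paper it is applied to a $\Q$-factorial terminalization $\hat X\to X$. Kawamata's theorem that birational minimal models are isomorphic in codimension $1$ requires \emph{terminal} singularities; for klt pairs crepant divisorial contractions are ubiquitous (e.g.\ the minimal resolution of a canonical surface singularity). Consequently $V:=f^*N^1(Y)$ is a proper subspace meeting $\Mov(X)$ only along its boundary, $f^*$ of an ample class is nef but not ample, and $f^*\Mov^+(Y)$ is a proper \emph{face} of $\Mov^+(X)$ rather than a full-dimensional slice $\Mov^+(X)\cap V$ through the interior. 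Your third step, which carves a fundamental domain out of such a slice, therefore starts from the wrong object. Your second step fails for the same reason: for $g\in\PsAut(Y,\Delta_Y)$ the composite $f^{-1}\circ g\circ f$ contracts every $f$-exceptional divisor whose divisorial valuation is not sent by $g$ back into the set of $f$-exceptional valuations, so it is in general not a pseudo-automorphism of $X$, and the injection $\iota:\PsAut(Y,\Delta_Y)\hookrightarrow\PsAut(X,\Delta)$ you rely on does not exist.

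The paper's proof goes in the opposite direction and engages with the face structure, which is the idea missing from your proposal. It invokes \cite[Proposition 3.6]{GLSW24}, a ``cone conjecture for faces'' statement: granted the movable cone conjecture for $(X,\Delta)$, the stabilizer $\Stab(F)$ of the face $F=f^*\Mov^+(Y)$ inside $\PsAut^*(X,\Delta)$ acts on $F$ with a rational polyhedral fundamental domain. It then checks that every element of $\Stab(F)$ \emph{descends} to a pseudo-automorphism of $(Y,\Delta_Y)$, using \cite[Theorem 1.2]{BCHM} and \cite[Lemma 4.2]{GLSW24}, so that the a priori larger group $\PsAut^*(Y,\Delta_Y)$ acting on $\Mov^+(Y)\cong F$ also admits a rational polyhedral fundamental domain. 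To repair your approach you would need, at a minimum, to replace the linear-section picture by this face picture and to replace the (nonexistent) lifting of $\PsAut(Y,\Delta_Y)$ by the descent of $\Stab(F)$.
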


\begin{proof}
Note that $F:=f^*\Mov^+(Y)$ is a face of the cone $\Mov^+(X)$. By \cite[Proposition 3.6]{GLSW24}, since $(X,\Delta)$ satisfies the movable cone conjecture, the subgroup $\mathrm{Stab}(F)$ of $\PsAut^*(X,\Delta)$ that stabilizes the face $F$ acts on $F$ with a rational polyhedral fundamental domain.
To conclude, it suffices to check that $\mathrm{Stab}(F)$ is a subgroup of $f^{-1}\circ \PsAut^*(Y,\Delta_Y)\circ f$. 

For $g\in \mathrm{Stab}(F)$, note that there exists $\alpha:X\dashrightarrow X'$ a small $\Q$-factorial modification such that the relative interiors of $g^*f^*\Nef(Y)$ and $f^*\alpha^*\Nef(Y)$ intersect by \cite[Theorem 1.2]{BCHM} (see also \cite[Proposition 4.10]{GLSW24}). By \cite[Lemma 4.2]{GLSW24}, this means that $f\circ g = \alpha\circ f$, thus 
$g\in f^{-1}\circ \PsAut^*(Y,\Delta_Y)\circ f$ as wished.
\end{proof}

\begin{lemma}\label{lem-descentbis}
Let $f:(X,\Delta)\to (Y,\Delta_Y)$ be a crepant birational morphism of klt Calabi--Yau pairs. Assume that finitely many pairs arise as small $\Q$-factorial modifications of $(X,\Delta)$, and that each of them satisfies the nef cone conjecture stated as in Conjecture \ref{conj-cc}. Then finitely many pairs arise as small $\Q$-factorial modifications of $(Y,\Delta_Y)$, and each of them satisfies the nef cone conjecture.
\end{lemma}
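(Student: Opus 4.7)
The plan is to mirror the proof of Lemma \ref{lem-descent}, replacing the movable cone conjecture with its nef analogue, and prefacing it with a chamber-counting argument to control finiteness of small $\Q$-factorial modifications (SQMs).

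First I would handle the finiteness statement. The pullback $F := f^*\Mov^+(Y)$ is a face of $\Mov^+(X)$. For any SQM $g \colon Y \dashrightarrow Y'$, the composition $g\circ f \colon X \dashrightarrow Y'$ is crepant birational, and by \cite[Theorem 1.2]{BCHM} it factors as $X \dashrightarrow X' \xrightarrow{f'} Y'$, where $\pi_{X'} \colon X \dashrightarrow X'$ is an SQM of $X$ and $f'$ is a crepant birational morphism. One checks that $(f')^*\Nef^+(Y') = F \cap \pi_{X'}^*\Nef^+(X')$, so each SQM of $Y$ produces a distinct chamber of $F$ coming from some SQM of $X$. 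Since $X$ has finitely many SQMs by assumption, only finitely many SQMs $Y'$ of $Y$ can occur.

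Next, for each such $Y'$ with a chosen crepant lift $f' \colon X' \to Y'$, the cone $(f')^*\Nef^+(Y')$ is a face of $\Nef^+(X')$ because $f'$ is crepant birational. Since the nef cone conjecture holds for $(X',\Delta_{X'})$ by hypothesis, \cite[Proposition 3.6]{GLSW24} applied to this face shows that the subgroup $\Stab((f')^*\Nef^+(Y'))$ of $\Aut^*(X',\Delta_{X'})$ acts on the face with a rational polyhedral fundamental domain. To deduce the nef cone conjecture for $(Y',\Delta_{Y'})$, I would identify this stabiliser with $(f')^{-1}\circ \Aut^*(Y',\Delta_{Y'}) \circ f'$, repeating the end of the proof of Lemma \ref{lem-descent}: given $g$ in the stabiliser, \cite[Theorem 1.2]{BCHM} provides an SQM $\alpha \colon X' \dashrightarrow X''$ whose pulled-back nef cone meets the relative interior of $g^*(f')^*\Nef^+(Y')$, and \cite[Lemma 4.2]{GLSW24} then forces $f'\circ g = \alpha \circ f'$.

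The main obstacle is essentially bookkeeping: one must verify that the descended map is a genuine regular automorphism of $Y'$ preserving $\Delta_{Y'}$, not merely a birational one. This is automatic here because $f'$ is a morphism and $g$ preserves the face $(f')^*\Nef^+(Y')$, whose semiample elements define the contraction to $Y'$, so $f'\circ g$ factors through $f'$ as a regular map on $Y'$. The chamber-counting step for finiteness is the genuinely new ingredient, and it follows directly from the structure of $F$ as a face of $\Mov^+(X)$ together with the assumed finiteness of SQMs of $X$.
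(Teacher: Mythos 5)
Your proposal has a genuine gap in the finiteness step. You argue that each SQM $g\colon Y\dashrightarrow Y'$ lifts, via BCHM, to an SQM $X'$ of $X$ with a crepant morphism $f'\colon X'\to Y'$, producing a distinct chamber $F\cap \pi_{X'}^*\Nef^+(X')$ of $F$, and you then conclude ``since $X$ has finitely many SQMs by assumption, only finitely many SQMs $Y'$ of $Y$ can occur.'' The problem is that the hypothesis ``finitely many pairs arise as SQMs of $(X,\Delta)$'' is about \emph{isomorphism classes} of pairs, not about chambers in $\Mov^+(X)$. When $\PsAut^*(X,\Delta)$ is infinite, a single isomorphism class of SQM corresponds to infinitely many nef cone chambers of $\Mov^+(X)$, and infinitely many of those can meet the face $F$. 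Your chamber-counting therefore does not bound either the number of chambers of $F$ or the number of isomorphism classes of targets $(Y',g_*\Delta_Y)$. Attempting to make the argument work at the level of isomorphism classes also fails: the lift $Y'\mapsto X'$ is not well-defined, and an isomorphism $X'_1\cong X'_2$ need not descend to an isomorphism $Y'_1\cong Y'_2$, so injectivity on iso classes is not clear.

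The paper avoids this entirely by invoking \cite[Theorem 1.5 (i), (3) $\Rightarrow$ (4)]{GLSW24} applied to $(X,\Delta)$: the combined hypothesis ``finitely many SQMs plus the nef cone conjecture for each'' is there shown to be equivalent to a stronger, global statement (essentially a version of the movable cone conjecture with a fundamental domain tiled by finitely many Mori chambers), and one then restricts to the Mori chambers associated to SQMs of $Y$ precomposed with $f$. Your argument uses only the ``finitely many SQMs'' half of the hypothesis in the finiteness step and never touches the nef cone conjecture for the $X'$, which is precisely what is needed to control the infinitely many chambers. The second part of your proposal---deducing the nef cone conjecture for each fixed SQM of $Y$ by applying an analogue of \cite[Proposition 3.6]{GLSW24} to the face $(f')^*\Nef^+(Y')\subset\Nef^+(X')$ and identifying the stabiliser with $(f')^{-1}\circ\Aut^*(Y',\Delta_{Y'})\circ f'$---is a sound plan and close in spirit to Lemma \ref{lem-descent}, but it is downstream of the broken finiteness step.
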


\begin{proof}
Apply \cite[Theorem 1.5 (i), (3) $\Rightarrow$ (4)]{GLSW24} to the pair $(X,\Delta)$, and restrict to the Mori chambers in $N^1(X)$ associated to small $\Q$-factorial modifications of $Y$ (pre-composed with $f$).
\end{proof}

We also prove a lemma about movable cones of products.

\begin{lemma}\label{lem-mov-product}
Let $X=X_1\times X_2$ be a product of normal projective varieties. Assume that $N^1(X) = p_1^*N^1(X_1) + p_2^*N^1(X_2)$, where $p_1,p_2$ denote the natural projections. Then every movable Cartier divisor on $X$ is the sum of the pullbacks of movable Cartier divisors on $X_1$ and $X_2$. Therefore, we have
$$\Mov(X) = p_1^*\Mov(X_1) + p_2^*\Mov(X_2).$$
\end{lemma}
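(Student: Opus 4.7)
The plan is to prove both inclusions by analyzing linear systems fibrewise. As a preparation, I would observe that the hypothesis upgrades the sum $N^1(X) = p_1^*N^1(X_1) + p_2^*N^1(X_2)$ to a direct sum: any class $p_1^*\alpha_1 = p_2^*\alpha_2$ vanishes upon restriction to a general fibre $\{x_1\} \times X_2$, the left-hand side because $p_1$ is constant on that fibre, the right-hand side because restriction to a fibre of $p_2$ recovers $\alpha_2$. Hence $\alpha_1 = \alpha_2 = 0$, and every class in $N^1(X)$ decomposes uniquely as $p_1^*\beta_1 + p_2^*\beta_2$.

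For the inclusion $\supseteq$, I would check that the pullback $p_i^*D_i$ of a movable Cartier divisor $D_i$ on $X_i$ is again movable: the base locus $\mathrm{Bs}(|p_i^*D_i|)$ is contained in $p_i^{-1}(\mathrm{Bs}(|D_i|))$, which has codimension $\geq 2$ in $X$. Extending by positive linear combinations and closures gives $p_1^*\Mov(X_1) + p_2^*\Mov(X_2) \subseteq \Mov(X)$.

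For the harder inclusion $\subseteq$, the key step is a restriction-to-fibre argument. Let $D$ be a movable Cartier divisor on $X$, and write $[D] = p_1^*[D_1] + p_2^*[D_2]$ via the direct-sum decomposition of $N^1(X)$ established above. Restricting $D$ (numerically) to a general fibre $F = X_1 \times \{x_2\} \cong X_1$ yields the class $[D_1]$ on $X_1$. The base locus of the complete linear system on $F$ is contained in $\mathrm{Bs}(|D|) \cap F$, and upper semi-continuity of fibre dimension guarantees that for general $x_2$, this intersection has codimension $\geq 2$ in $F$. Hence $[D_1] \in \Mov(X_1)$, and symmetrically $[D_2] \in \Mov(X_2)$. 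Extending by convex combinations and then by limits, and using that the sum of two closed pointed cones sitting in complementary subspaces of a direct sum is automatically closed, yields $\Mov(X) \subseteq p_1^*\Mov(X_1) + p_2^*\Mov(X_2)$.

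I expect the only mildly technical point to be the fibre-dimension semi-continuity in the restriction step (handling prime divisors of $X$ that are ``vertical'' of the form $X_1 \times E$, for which one simply avoids $x_2 \in E$ in the choice of general fibre); everything else is bookkeeping.
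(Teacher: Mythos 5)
Your proof is correct and follows essentially the same route as the paper's: the key step in both is restricting a movable divisor to a general fibre $X_1\times\{x_2\}$ and using semicontinuity of fibre dimension over $X_2$ to conclude that the base locus (which has no divisorial component) meets a general fibre in codimension $\geq 2$. The paper phrases this with two sections of $|D|$ having no common divisorial component rather than with $\mathrm{Bs}(|D|)$, and leaves the easy inclusion and the closedness of the sum implicit, but these are cosmetic differences.
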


\begin{proof}
Let $D$ be a movable Cartier divisor on $X$. We write $D=p_1^*D_1+p_2^*D_2$.
Since $D$ is movable, we can pick two sections $s,s'\in|D|$ that have no common divisorial component. For a component $Z$ of the intersection $s\cap s'$, the restriction of the projection $p_2:Z\to  X_2$ has general fiber that is empty, or of dimension at most $\dim X_1 -2$.
In particular, for a general point $u\in X_2$, the locus $s\cap s'$ intersects the fiber $X_1\times \{u\}$ along a closed subscheme of codimension at least $2$. Thus, $s|_{X_1\times\{u\}}$ and $s'|_{X_1\times\{u\}}$ are well-defined sections of the linear system $|D_1|$ on $X_1$, and have no common divisorial component. This shows that $D_1$ is movable, and the same argument works symmetrically for $D_2$. 

Since the movable cone is the closed convex cone spanned by classes of movable divisors, this concludes the proof.
\end{proof}

The next lemma is inspired by \cite[Lemma 4.6]{Druel}, and is therein also credited to C. Casagrande.

\begin{lemma}\label{lem-psautpreserves}
Let $X= X_1\times X_2$ be a product of projective varieties with klt $\Q$-factorial singularities, and with $h^1(X_2)= 0$.
Consider a small $\Q$-factorial modification $\alpha: X\dashrightarrow Y$. Then there exist $\alpha_i:X_i\dashrightarrow Y_i$ small $\Q$-factorial modifications for $i=1,2$ such that $Y=Y_1\times Y_2$ and $\alpha=(\alpha_1,\alpha_2)$.
\end{lemma}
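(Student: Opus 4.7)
The plan is to exploit the hypothesis $h^1(X_2)=0$ to split the Néron--Severi space of $X$, build coordinatewise small $\Q$-factorial modifications $\alpha_i:X_i\dashrightarrow Y_i$ via BCHM, and then identify $\alpha$ with the product $(\alpha_1,\alpha_2)$ using the uniqueness of the SQM attached to a given Mori chamber of $\overline{\Mov}(X)$.

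First, since klt implies rational singularities, $h^1(\mathcal{O}_{X_2})=0$ gives $\pic^0(X_2)=0$, and the standard product formula for Picard groups then yields $N^1(X) = p_1^*N^1(X_1)\oplus p_2^*N^1(X_2)$. Lemma~\ref{lem-mov-product} therefore applies, and the movable cone splits accordingly: $\Mov(X) = p_1^*\Mov(X_1)\oplus p_2^*\Mov(X_2)$. Fix a rational ample class $[H_Y]\in N^1(Y)$ and let $[D] := \alpha^*[H_Y]\in \Mov(X)^\circ$. Decompose $[D] = p_1^*[D_1]+p_2^*[D_2]$ with each $[D_i]$ a rational, big, movable class on $X_i$.

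Next, I would apply BCHM to each factor: for a rational effective representative $D_i'$ of $[D_i]$ and a small $\varepsilon>0$, running the $(K_{X_i}+\varepsilon D_i')$-MMP with scaling terminates at a projective klt $\Q$-factorial variety $Y_i$, yielding an SQM $\alpha_i:X_i\dashrightarrow Y_i$ such that the transform of $[D_i]$ is ample on $Y_i$. The product $\beta := (\alpha_1,\alpha_2):X\dashrightarrow Y_1\times Y_2$ is itself an SQM, as its indeterminacy locus has codimension at least $2$. The hypothesis $h^1(X_2)=0$ is preserved under SQMs of klt varieties, so $N^1(Y_1\times Y_2)$ and $\Nef(Y_1\times Y_2)$ split along the projections as well, whence
\[[D] = p_1^*[D_1]+p_2^*[D_2]\in \beta^*\Nef(Y_1\times Y_2)^\circ.\]

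Therefore $[D]$ lies in the interior of both Mori chambers $\alpha^*\Nef(Y)$ and $\beta^*\Nef(Y_1\times Y_2)$ of $\overline{\Mov}(X)$. These chambers must coincide, so the composite $\alpha\circ\beta^{-1}:Y_1\times Y_2\dashrightarrow Y$ is an SQM pulling back an ample class to an ample class, hence a biregular isomorphism. This identifies $Y$ with $Y_1\times Y_2$ and $\alpha$ with $(\alpha_1,\alpha_2)$, as desired. The main obstacle is the BCHM step: extracting each $\alpha_i$ from a big movable rational class requires care in setting up an MMP with scaling on a klt pair that terminates at a $\Q$-factorial minimal model in which $[D_i]$ lies in the open ample cone---not merely the nef cone---of $Y_i$.
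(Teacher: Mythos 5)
Your proof follows the same overall strategy as the paper's: decompose the pullback $\alpha^*H$ of an ample class along the splitting of $N^1(X)$ and of $\Mov(X)$ (Lemma~\ref{lem-mov-product}), construct from each summand $M_i$ a small modification $\alpha_i:X_i\dashrightarrow Y_i$ making $M_i$ ample, and then conclude by observing that $\alpha^*\mathrm{Amp}(Y)$ and $(\alpha_1,\alpha_2)^*\mathrm{Amp}(Y_1\times Y_2)$ are overlapping Mori chambers of $\overline{\Mov}(X)$, hence equal — which is precisely the content of the reference to \cite[Lemma 4.2]{GLSW24} in the paper. Two remarks on where you differ.

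You explicitly derive $N^1(X)=p_1^*N^1(X_1)\oplus p_2^*N^1(X_2)$ from $h^1(\mathcal{O}_{X_2})=0$ via rational singularities and $\mathrm{Pic}^0(X_2)=0$; the paper takes this for granted (it is the standing hypothesis of Lemma~\ref{lem-mov-product} and is cited elsewhere as \cite[Exercise III.12.6]{HarBook}). That is a harmless and welcome addition.

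The genuine divergence, which you yourself flag as the ``main obstacle'', is in how to obtain the $\alpha_i$. Running the $(K_{X_i}+\varepsilon D_i')$-MMP with scaling is not quite the right move: by BCHM the output is a model where $K_{Y_i}+\varepsilon(\alpha_i)_*D_i'$ is nef, which does not by itself make $(\alpha_i)_*D_i$ ample, and there is also no guarantee this MMP consists only of flips for a variety that is merely klt $\Q$-factorial (not a Mori dream space). What you actually need is the ample model of $D_i$, and the cleanest way to see it exists here is the route the paper takes implicitly: since $H^0(X,m\alpha^*H)=H^0(X_1,mM_1)\otimes H^0(X_2,mM_2)$, the finitely generated ring $R(Y,H)\cong R(X,\alpha^*H)$ is the Segre product of $R(X_1,M_1)$ and $R(X_2,M_2)$, so each $R(X_i,M_i)$ is finitely generated; then $Y_i:=\mathrm{Proj}\,R(X_i,M_i)$ carries an ample $H_i$ with $M_i=\alpha_i^*H_i$, and movability of $M_i$ makes $\alpha_i$ small. (The $\Q$-factoriality of the $Y_i$ is then read off from that of $Y$ once the isomorphism $Y\cong Y_1\times Y_2$ is established.) If you replace the MMP-with-scaling step by this section-ring argument, the proof is complete and matches the paper's.
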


\begin{proof}
By \cite[Exercise III.12.6]{HarBook}, since $h^1(X_2)=0$, we can decompose $N^1(X)$ as $p_1^*N^1(X_1)\oplus p_2^*N^1(X_2)$. Thus and by Lemma \ref{lem-mov-product}, we have a decomposition of the movable cone:
$$\Mov(X)=p_1^*\Mov(X_1)\oplus p_2^*\Mov(X_2).$$
Let $H$ be a very ample Cartier divisor on $Y$, then 
$$\alpha^* H = p_1^*M_1+p_2^*M_2,$$ 
where $M_i$ is a movable, big Cartier divisor on $X_i$. 

The linear system $|M_i|$ defines a birational contraction $\alpha_i:X_i\dashrightarrow Y_i$, with an ample divisor $H_i$ on $Y_i$ such that $M_i=\alpha_i^*H_i$. In the Néron-Severi space of $X$, we have a non-empty intersection
$$\alpha^*\mathrm{Amp}(Y)\cap (\alpha_1,\alpha_2)^*\mathrm{Amp}(Y_1\times Y_2) \neq\emptyset,$$
so by \cite[Lemma 4.2]{GLSW24} (see also \cite[Lemma 1.5]{Kaw97} in dimension 3), the birational map
$$(\alpha_1,\alpha_2)\circ \alpha^{-1} : Y\to Y_1\times Y_2$$
is indeed a biregular isomorphism.
\end{proof}

\begin{corollary}\label{cor-psautpreserves}
Let $X=\prod_{i=1}^r X_i$ be a product of projective varieties with klt $\Q$-factorial singularities, with $h^1(X_i)= 0$ for $i\ge 2$.
Then $\PsAut(X)$ preserves the decomposition of $X$, up to permuting factors that are isomorphic in codimension $1$.
\end{corollary}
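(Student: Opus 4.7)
My plan is to argue by induction on the number of factors $r$, using Lemma \ref{lem-psautpreserves} iteratively. The case $r=1$ is vacuous. For the inductive step, I fix $\alpha \in \PsAut(X)$ and single out the last factor, writing $X = X' \times X_r$ with $X' := \prod_{i=1}^{r-1} X_i$. The assumption $h^1(X_r) = 0$ places us exactly in the setting of Lemma \ref{lem-psautpreserves}: applied to the SQM $\alpha: X \dashrightarrow X$, it yields small $\Q$-factorial modifications $\beta : X' \dashrightarrow Y'$ and $\alpha_r : X_r \dashrightarrow Y_r$ such that $\alpha = (\beta, \alpha_r)$ and $X = Y' \times Y_r$. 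In particular $Y_r$ is isomorphic in codimension $1$ to $X_r$, and $Y'$ is isomorphic in codimension $1$ to $X'$.

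Next I iterate the same lemma on $\beta : X' \dashrightarrow Y'$, grouping $X' = (X_1 \times \cdots \times X_{r-2}) \times X_{r-1}$ and using $h^1(X_{r-1}) = 0$, then peeling off $X_{r-2}$, and so on down to $X_2$. Since products of klt $\Q$-factorial varieties are themselves klt and $\Q$-factorial, the lemma applies at each stage. This produces a full factorization
\[
\alpha = (\alpha_1, \ldots, \alpha_r), \qquad \alpha_i : X_i \dashrightarrow Y_i \text{ an SQM}, \qquad X = Y_1 \times \cdots \times Y_r,
\]
with each $Y_i$ isomorphic in codimension $1$ to $X_i$.

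To close the argument, I compare the two product decompositions $X = X_1 \times \cdots \times X_r = Y_1 \times \cdots \times Y_r$ of the same variety $X$. Because each $Y_i$ lies in the same SQM-equivalence class as $X_i$, any reordering of factors needed to pass from one decomposition to the other can only exchange factors that are isomorphic in codimension $1$, which is precisely the type of permutation appearing in the corollary's statement. Combining this with the componentwise factorization of $\alpha$ above yields the claim.

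The main obstacle I anticipate is making the last comparison rigorous -- that is, producing a permutation $\sigma$ of $\{1,\ldots,r\}$ with $X_i \cong_{\textrm{SQM}} X_{\sigma(i)}$ such that the target product decomposition $Y_1 \times \cdots \times Y_r$ is identified, via a reindexing by $\sigma$, with $X_1 \times \cdots \times X_r$. This amounts to a uniqueness-up-to-permutation statement for the product decomposition of $X$ into factors from fixed SQM-equivalence classes, which I expect to extract from the direct-sum decomposition $\Mov(X) = \bigoplus_i p_i^* \Mov(X_i)$ of Lemma \ref{lem-mov-product} together with the way $\alpha^*$ transports the summands $q_i^* \Mov(Y_i)$ onto the $p_i^* \Mov(X_i)$ (as manifest in the proof of Lemma \ref{lem-psautpreserves}): a Krull--Schmidt-style matching of the two direct-sum decompositions of the fixed cone $\Mov(X)$ then supplies the required permutation.
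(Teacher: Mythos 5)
Your first two steps --- peeling off factors one at a time with Lemma \ref{lem-psautpreserves} (the hypothesis $h^1(X_i)=0$ for $i\ge 2$ is exactly what lets you iterate, via \cite[Exercise III.12.6]{HarBook} and Lemma \ref{lem-mov-product}) --- are correct and coincide with the paper's argument, which obtains the same factorization $\alpha=(\alpha_1,\dots,\alpha_r)$ with $\alpha_i:X_i\dashrightarrow Y_i$ and $X=Y_1\times\cdots\times Y_r$.

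The gap is exactly the step you flag as the ``main obstacle'', and your proposed fix does not close it. What is needed is a uniqueness statement for product decompositions of the \emph{variety} $X$: that the new decomposition $Y_1\times\cdots\times Y_r$ of $X$ differs from the given one $X_1\times\cdots\times X_r$ only by a permutation of factors. A Krull--Schmidt-style matching of the two direct-sum decompositions of $\Mov(X)$ cannot deliver this: first, the summands $p_i^*\Mov(X_i)$ need not be indecomposable nor pairwise distinguishable (think of several surface factors with simplicial movable cones, or of an abelian factor $X_1$ whose self-dual homogeneous cone decomposes further), so the matching of cone summands is itself ambiguous; second, even a canonical matching only identifies linear subspaces of $N^1(X)$, and an equality $q_j^*N^1(Y_j)=p_{\sigma(j)}^*N^1(X_{\sigma(j)})$ does not by itself produce an isomorphism in codimension one between $Y_j$ and $X_{\sigma(j)}$, nor the compatibility of projections that ``preserving the decomposition'' requires. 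The paper imports this uniqueness as a geometric input: it applies \cite[Lemma 4.6]{Druel} (credited to C.~Casagrande) to the identity map of $X$ to conclude that any product decomposition of $X$ differs from the given one by a mere permutation, and only then combines this with the factorization of $\alpha$. Replacing your sketched convex-geometry argument by that citation (or an equivalent uniqueness-of-decomposition result) is what is needed to complete the proof.
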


\begin{proof}
Note that applying \cite[Lemma 4.6]{Druel} to the identity from $X$ to itself, any other decomposition of $X$ as a product differs by a mere permutation.
Thus, by \cite[Exercise III.12.6]{HarBook} and by Lemma \ref{lem-psautpreserves}, for a pseudoautomorphism $\alpha$ of $X$, there is a permutation $\sigma$ of the integers between $1$ and $r$ and, for every index $i$, a small $\Q$-factorial modification $\alpha_i:X_i\dashrightarrow X_{\sigma(i)}$, such that
$$\alpha = \prod_{i=1}^r \alpha_i,$$
which indeed, preserves the decomposition of $X$ up to permutation of factors that are isomorphic in codimension $1$.
\end{proof}

We conclude with a lemma inspired by \cite{MY15}, \cite[Section 7, Page 31]{LMP24}, and \cite[Proof of Theorem 6.1 (iii) (3a), Page 26-27]{GLSW24}. Here, one may think of the $(V_w)_{w\in W}$ as wall hyperplanes corresponding to flops.

\begin{lemma}\label{lem-finitesqm}
Let $(X,\Delta)$ be a klt Calabi--Yau pair that satisfies the movable cone conjecture. Suppose that there exist hyperplanes $(V_w)_{w\in W}$ in $N^1(X)$ such that
$$\mathrm{Mov}^{\circ}(X) \setminus \bigcup_{w\in W} V_w = \bigsqcup_{\substack{\alpha: X\dashrightarrow X'\\ \mathrm{SQM}}}\alpha^*\mathrm{Amp}(X')$$
and such that for any rational polyhedral cone $\Pi\subset \Mov^+(X)$, only finitely many of the $V_w$ are intersecting $\Pi^{\circ}$.
Then there are finitely many isomorphism classes of pairs $(Y,\Delta_Y)$ obtained by small $\Q$-factorial modifications of $(X,\Delta)$, and the nef cone conjecture holds for each of them.
\end{lemma}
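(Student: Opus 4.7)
The plan is to reduce both conclusions to standard chamber bookkeeping on a fundamental domain $\Pi$ granted by the movable cone conjecture, essentially unfolding the implication $(3) \Rightarrow (4)$ of \cite[Theorem 1.5 (i)]{GLSW24} that was already invoked in Lemma \ref{lem-descentbis}.

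First I would fix a rational polyhedral fundamental domain $\Pi \subset \Mov^+(X)$ for the $\PsAut(X,\Delta)$-action. By the walls hypothesis, only finitely many $V_{w_1}, \ldots, V_{w_N}$ meet $\Pi^{\circ}$, so $\Pi^{\circ} \setminus \bigcup_i V_{w_i}$ is a disjoint union of finitely many open rational polyhedral sub-regions, each contained in a unique Mori chamber $\alpha_k^*\mathrm{Amp}(X'_k)^\circ$ for some SQM $\alpha_k: X \dashrightarrow X'_k$, $k = 1, \ldots, M$. Given any SQM $\alpha: X \dashrightarrow Y$, a sufficiently general rational point of $\alpha^*\mathrm{Amp}(Y)$ gets transported by some $g \in \PsAut(X,\Delta)$ into $\Pi^{\circ}$; disjointness of Mori chambers then forces the translate $g_*\alpha^*\mathrm{Amp}(Y)^{\circ}$ to coincide with one of the $\alpha_k^*\mathrm{Amp}(X'_k)^{\circ}$. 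Hence $Y \cong X'_k$ via $\alpha_k \circ g \circ \alpha^{-1}$ by \cite[Lemma 4.2]{GLSW24}, and this is an isomorphism of pairs $(Y, \Delta_Y) \cong (X'_k, \Delta_{X'_k})$ because $g$ is a crepant pseudoautomorphism preserving $\Delta$. This already yields at most $M$ isomorphism classes of SQM pairs.

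For the nef cone conjecture on each $(X'_k, \Delta_{X'_k})$, I would identify $\Aut(X'_k, \Delta_{X'_k})$ via conjugation by $\alpha_k$ with the stabilizer $S_k \leq \PsAut(X,\Delta)$ of the Mori chamber $\alpha_k^*\Nef(X'_k)$; then $\Nef^+(X'_k)$ is identified with $(\alpha_k^*\Nef(X'_k))^+$, and the candidate rational polyhedral fundamental domain for the $S_k$-action is the closure of the union of those finitely many open sub-regions of $\Pi^{\circ}$ contained in $\alpha_k^*\mathrm{Amp}(X'_k)$. Covering and interior-disjointness under $S_k$ both descend from the corresponding properties of $\Pi$ under $\PsAut(X,\Delta)$, combined with the disjointness of Mori chambers: an $S_k$-orbit point of the interior cannot leave its Mori chamber when transported into $\Pi^{\circ}$.

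The main obstacle is the final bookkeeping step, namely checking carefully that the candidate subset is a genuine rational polyhedral fundamental domain (interior-disjoint translates covering all of $\alpha_k^*\Nef(X'_k)^+$, not just a dense open subset). Since our hypotheses appear to match exactly condition (3) in \cite[Theorem 1.5 (i)]{GLSW24}, the cleanest route is to invoke that theorem as a black box and cite it, in the same spirit as Lemma \ref{lem-descentbis}.
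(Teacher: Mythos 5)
Your finiteness argument is essentially the paper's: cover the fundamental domain $\Pi$ by finitely many Mori chambers (possible by the wall hypothesis) and transport an arbitrary SQM chamber into $\Pi$ to identify it, via \cite[Lemma 4.2]{GLSW24}, with one of finitely many. That part is fine.

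The nef cone conjecture part has a genuine gap, and it is not only the one you flag. Your candidate fundamental domain for the stabilizer $S_k$ --- the closure of the pieces of $\Pi^{\circ}$ lying inside the single chamber $\alpha_k^*\mathrm{Amp}(X'_k)$ --- is in general too small to cover. Given $D\in\mathrm{Amp}(X'_k)$, the fundamental domain property of $\Pi$ only produces some $\beta\in\PsAut(X,\Delta)$ with $\beta^*\alpha_k^*D\in\Pi$, and this $\beta$ need not lie in $S_k$: the point lands in some \emph{translate} $\beta_j^*\alpha_k^*\mathrm{Amp}(X'_k)$ of the chamber that happens to meet $\Pi^{\circ}$, not in the chamber itself. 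Your assertion that ``an $S_k$-orbit point cannot leave its Mori chamber when transported into $\Pi^{\circ}$'' conflates the $S_k$-action with the full $\PsAut(X,\Delta)$-action. The paper's fix is to take $\Sigma_k$ to be the union of $(\beta_j^{-1})^*\Pi\cap\Nef(X'_k)$ over \emph{all} the finitely many translates $\beta_j^*\alpha_k^*\mathrm{Amp}(X'_k)$ meeting $\Pi^{\circ}$, i.e.\ to pull every relevant piece of $\Pi$ back into the chamber; then $h:=\alpha_k\beta\beta_j^{-1}\alpha_k^{-1}$ is biregular and $h^*D\in\Sigma_k$. Second, the ``main obstacle'' you identify (verifying interior-disjointness) is dissolved by a tool you are missing: \cite[Proposition 4.1]{Loo14} converts the mere \emph{covering} of $\mathrm{Amp}(X'_k)$ by $\Aut(X'_k,\Delta_{X'_k})$-translates of a rational polyhedral cone into the existence of a rational polyhedral fundamental domain, so no disjointness needs to be checked. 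Finally, the proposed fallback of citing \cite[Theorem 1.5 (i), (3) $\Rightarrow$ (4)]{GLSW24} as a black box is not clearly available: the hypotheses here (a wall system with local finiteness on rational polyhedral subcones) are only \emph{inspired by} that setting, and the paper deliberately gives a direct proof rather than matching condition (3) of that theorem; you would need to verify the equivalence of hypotheses before outsourcing the conclusion.
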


\begin{proof}
Let $\Pi$ be a rational polyhedral fundamental domain for $\PsAut^*(X,\Delta)$ acting on the cone $\Mov^+(X)$. By assumption, we can take finitely many small $\Q$-factorial modifications
$\alpha_i:X\dashrightarrow X_i$ with $1\le i\le r$ to cover
$$\Pi
\quad\subset\quad
\bigcup_{i=1}^r \alpha_i^*\Nef(X_i).$$
Since $\Pi$ is a fundamental domain for $\Mov^+(X)$, this already shows that any small $\Q$-factorial modification of $X$ identifies up to $\PsAut(X,\Delta)$ with one of the $\alpha_i$. Thus, a pair $(Y,\Delta_Y)$ obtained by a small $\Q$-factorial modification from $(X,\Delta)$ must be one of the $(X_i,{\alpha_i}_*\Delta)$. 

As a corollary, for a fixed index $i$, the chambers of the form $\beta^*\alpha_i^*\mathrm{Amp}(X_i)$ with $\beta\in\PsAut(X,\Delta)$ that intersect $\Pi^{\circ}$ are finitely many. We denote them by $\beta_j^*\alpha_i^*\mathrm{Amp}(X_i)$ with $1\le j\le s_i$.

Fix $i$. We now prove the nef cone conjecture for $(X_i,{\alpha_i}_*\Delta)$. 
Denote by $\Sigma_i$ the sum of the cones of the form ${\beta_j^{-1}}^*\Pi\cap \Nef(X_i)$ for $1\le j\le s_i$. Note that $\Sigma_i$ is itself rational polyhedral: Indeed, by assumption on the wall hyperplanes $(V_w)$, any intersection of the form $\alpha_k^*\Nef(X_k) \cap\Pi$ for $1\le k\le r$ is a rational polyhedral cone.
If the translates of $\Sigma_i$ by $\Aut(X_i,{\alpha_i}_*\Delta)$ cover the cone $\mathrm{Amp}(X_i)$, then \cite[Proposition 4.1]{Loo14} concludes. Let $D\in\mathrm{Amp}(X_i)$. There exists $\beta\in\PsAut(X,\Delta)$ such that $\beta^*\alpha_i^*D\in \Pi$, in particular it belongs to a chamber $\beta_j^*\alpha_i^*\mathrm{Amp}(X_i)$ for some $j$. By \cite[Lemma 4.2]{GLSW24}, this forces 
$$h:=\alpha_i\beta\beta_j^{-1}\alpha_i^{-1}$$ 
to be a biregular automorphism of $X_i$, and in fact $h\in \Aut(X_i,{\alpha_i}_*\Delta)$. It is now immediate that $h^*D$ belongs to $\Sigma_i$.
\end{proof}

\section{Well-clipped, neatly clipped, perfectly clipped cones}

\subsection{Definition and examples of well-clipped cones}

This definition is the focal point of the whole section.

\begin{notation}
When $V$ is a finite dimensional real vector space, $H$ a hyperplane in $V$, we denote by $H_-$ and $H_+$ the two connected components of the complement $V\setminus H$. We call them the \textit{negative}, respectively \textit{positive side} of $H$.
\end{notation}

\begin{definition}\label{def-nicelycutout}
Let $\mathcal{A}$ be a self-dual homogeneous cone in $V=V_{\Z}\otimes \R$. A full-dimensional convex cone $\mathcal{C}$ is {\it well-clipped} in $\mathcal{A}$ if there exists an at most countable set of hyperplanes $({H_i})_{i\in I}$ of $V$ such that
$$\overset{\circ}{\mathcal{C}}=\mathcal{A}\cap \bigcap_{i\in I} H_{i,+},$$
and the following three assumptions hold
\begin{enumerate}
\item[(i)] Decomposing $\mathcal{A} = \bigoplus_{1\le j\le r} \mathcal{A}_j$ into its finitely many $\R$-indecomposable summands, every hyperplane $H_i$ is of the form
$$H_i = 
H_i\cap {\rm Span}_{\R}\, \mathcal{A}_{j(i)}
\oplus
\bigoplus_{\substack{1\le k\le r \\k\neq j(i)}}{\rm Span}_{\R}\, \mathcal{A}_k,$$
where the remaining cone $\mathcal{A}_{j(i)}$ is of hyperbolic type and defined over $V_{\Q}$.
\end{enumerate}
Fixing an $\mathrm{Aut}(\mathcal{A},V_{\Z})$-invariant and $V_{\Z}$-integral quadratic form $q$ that is a direct sum of hyperbolic forms on the linear spans of the $(\mathcal{A}_{j(i)})_{i\in I}$ and of a positive definite quadratic form as in Lemma \ref{lem-quadratic} on the other summands' spans, 
\begin{enumerate}
\item[(ii)] For every $i\in I$, the $q$-orthogonal reflection
$\sigma_i$ fixing the hyperplane $H_i$ preserves the lattice $V_{\Z}$.
\item[(iii)] For every $i,k\in I$ such that $H_i\neq H_k$, any elements $e_i,e_k$ perpendicular to $H_i,H_k$ and on their negative sides satisfy $q(e_i,e_k) \ge 0$.
\end{enumerate}

We call a cone {\it well-clipped} if there exists a self-dual homogeneous cone $\mathcal{A}$ in $V=V_{\Z}\otimes \R$ in which it is well-clipped.
\end{definition}

\begin{remark}\label{rem-minimal}
Unless otherwise stated, when we describe a well-clipped cone $\mathcal{C}$ in a self-dual homogeneous cone $\mathcal{A}$, we pick a list of hyperplanes $(H_i)_{i\in I}$ that has no redundancies (that is, $i\neq i'\Rightarrow H_i\neq H_{i'}$), and such that each halfspace $H_{i,-}$ intersects $\mathcal{A}$ (that is, the halfspace $H_{i,+}$ cuts out a proper subset in $\mathcal{A}$).
\end{remark}

\begin{remark}\label{rem-root} We make a few remarks on the role of the rational structure $V_{\Q}$ in Definition \ref{def-nicelycutout}.
\begin{itemize}
\item To state Assumptions (ii) and (iii), we use an integral quadratic form $q$ on $V_{\Z}$ that restricts to a hyperbolic quadratic form on the span $V_{j(i)}$ of each summand $\mathcal{A}_{j(i)}$. Since the cone $\mathcal{A}_{j(i)}$ is defined over $V_{\Q}$ by Assumption (i), the intersection $V_{\Z}\cap V_{j(i)}$ is a lattice in $V_{j(i)}$: This allows to view the restriction $q|_{V_{j(i)}}$ as a hyperbolic and integral quadratic form on $V_{j(i)}$.

\item For any $i\in I$, note that, by Assumption (ii),
$$H_i^{\perp}=\ker(\sigma_i +\id).$$
The linear map $\sigma_i +\id$ preserves the lattice $V_{\Z}$, hence restricts to a linear endomorphism of $V_{\Q}$. Thus, its kernel is defined over $V_{\Q}$. Since $\sigma_i$ already fixes a hyperplane, this shows that $H_i^{\perp}$ is a line defined over $V_{\Q}$, and so we have an isomorphism
$${H_i}^{\perp}\cap V_{\Z} \simeq \Z.$$
\end{itemize}
\end{remark}

Remark \ref{rem-root} permits the following notation.

\begin{notation}\label{note} In the set-up of Definition \ref{def-nicelycutout}, for any $i\in I$, we denote by $e_i$ the generator of the monoid 
$${H_i}^{\perp}\cap H_{i,-}\cap V_{\Z}\simeq \N.$$
\end{notation}

\begin{lemma}\label{lem-roots}
In the set-up of Definition \ref{def-nicelycutout}, Remark \ref{rem-minimal}, and Notation \ref{note}, the following properties hold:
\begin{enumerate}
\item For all $a\in \mathcal{A}$, we have
$a\in\mathcal{C}^{\circ}$ if and only if $q(e_i,a)> 0$ for all $i\in I$.
\item For all $i\in I$, $e_i$ is in the linear span of the hyperbolic summand $\mathcal{A}_{j(i)}$ of $\mathcal{A}$, that is defined over $V_{\Q}$.
\item For all $i\in I$, $q(e_i,e_i)$ is negative. 
\item The $q$-orthogonal reflection given by
$$\sigma_i:v\in V\mapsto v -\frac{2q(e_i,v)}{q(e_i,e_i)} e_i$$
preserves the lattice $V_{\Z}$.
\item We have $q(e_i,e_k)\ge 0$ for all $i\neq k\in I$.
\end{enumerate}
Moreover, the data of hyperplanes $(H_i)_{i\in I}$ for Definition \ref{def-nicelycutout} is equivalent to the data of elements $(e_i)_{i\in I}$ in $V_{\Z}$ satisfying Properties (1)--(5).
\end{lemma}

\begin{proof}
Item (1) is immediate.
We prove (2). Fix $i\in I$. Denoting by $V_{j(i)}$ the linear span of $\mathcal{A}_{j(i)}$ and by $q_{j(i)}$ the restriction $q|_{V_{j(i)}}$, we have
$$e_i\in H_i^{{\perp}_q} = (H_{i}\cap V_{j(i)})^{{\perp}_{q_{j(i)}}} \subset V_{j(i)}.$$

We now prove (3). Fix $i\in I$. By (2), we have $q(e_i,e_i)=q_{j(i)}(e_i,e_i)$. By Remark \ref{rem-minimal}, the hyperplane $H_{i}\cap V_{j(i)}$ intersects the hyperbolic cone $\mathcal{A}_{j(i)}$ non-trivially. Therefore, the restriction of $q_{j(i)}$ to $H_{i}\cap V_{j(i)}$ has signature $(1,\dim V_{j(i)} - 2)$, and the fact that $e_i$ is orthogonal to that hyperplane yields
$q_{j(i)}(e_i,e_i) < 0,$ as wished.

Note that (4) and (5) are equivalent to Assumption (ii) and (iii) respectively.

For the ``Moreover'' part, let $(e_i)_{i\in I}$ be elements of $V_{\Z}$ satisfying Properties (1)-(5). We define 
$$H_i=(e_i)^{{\perp}_q}\mbox{ and }H_{i,+}=\{x\in V\mid q(x,e_i) > 0\}.$$
By Property (1), we have
$$\mathcal{C}^{\circ}=\mathcal{A}\cap\bigcap_{i\in I} H_{i,+}.$$
By Property (2), we have $e_i\in V_{j(i)}$ for some hyperbolic summand $\mathcal{A}_{j(i)}$ of $\mathcal{A}$. Since $q$ splits as a hyperbolic form on $V_{j(i)}$ and a residual quadratic form on the span $W$ of all other summands of $\mathcal{A}$, the space $H_i$ is the direct sum of the $q_{j(i)}$-orthogonal of $e_i$ in $V_{j(i)}$ and of $W$. Property (3) then concludes that $H_i$ is a hyperplane; it is clearly defined over $V_{\Q}$. This proves Assumption (i). As mentioned previously, Properties (4) and (5) ensure Assumptions (ii) and (iii). 
\end{proof}

\begin{remark}\label{rem-hypAss}
Let $\mathcal{A}$ be a self-dual homogeneous cone of hyperbolic type in $V=V_{\Z}\otimes \R$ and let $q$ denote the corresponding hyperbolic form. Consider a collection $(H_i)_{i\in I}$ of hyperplanes satisfying Remark \ref{rem-minimal}. It clearly satisfies Assumption (i) of Definition \ref{def-nicelycutout}. It satisfies Assumption (ii) if and only if, for all $i\in I$, the corresponding element $e_i\in V_{\Z}$ satisfies:
\begin{center}
$q(e_i,e_i)$ divides every element of $2 q(e_i,V_{\Z}) \subset \Z$.
\end{center}
In general, Assumption (ii) could be rephrased, in a lattice-theoretic context, by saying that the $(e_i)_{i\in I}$ are {\it roots} of the lattice $V_{\Z}$, see \cite[Section 0.1]{Borcherds}. Yet we prefer to avoid this terminology, as it is often expected of roots to have norm $-2$, whilst our $(e_i)_{i\in I}$ can have any negative square.
\end{remark}

We now prove that the data of the cones $\mathcal{C}$ and $\mathcal{A}$ and of the quadratic form $q$ determines the set of hyperplanes $(H_i)_{i\in I}$ used to cut out $\mathcal{C}$ (after removing superfluous ones in the sense of Remark \ref{rem-minimal}). 

\begin{proposition}\label{prop-roots}
Let $\mathcal{A}$ be a self-dual homogeneous cone in $V=V_{\Z}\otimes \R$. Let $\mathcal{C}$ be a well-clipped cone in $\mathcal{A}$ and let $q$ be a quadratic form on $V$. Up to permutation of the indices, there is at most one collection of hyperplanes $(H_i)_{i\in I}$ in $V$ that satisfies Remark \ref{rem-minimal} and cuts out $\mathcal{C}$ with respect to the quadratic form $q$ in the sense of Definition \ref{def-nicelycutout}.
\end{proposition}

\begin{proof}
Assume that a certain collection of hyperplanes $(H_i)_{i\in I}$ satisfies Remark \ref{rem-minimal} and cuts out $\mathcal{C}$ with respect to our fixed quadratic form $q$, in the sense of Definition \ref{def-nicelycutout}. To each $H_i$, we associate the element $e_i\in V$ given by Notation \ref{note}.

We will now characterize $\{e_i \mid i\in I\}$ as the set of primitive generators of extremal rays of the dual cone $\mathcal{C}^{\vee_q}$ that are not contained in the cone $\overline{\mathcal{A}}$. Since $(e_i)_{i\in I}$ determines $(H_i)_{i\in I}$ by Lemma \ref{lem-roots}, this characterization is enough to conclude.

Since $\mathcal{A}$ is self-dual with respect to the quadratic form $q$, the dual cone of $\mathcal{C}$ is
$$\mathcal{C}^{\vee_q} = \overline{\mathrm{Cone}\,(\mathcal{A},(e_i)_{i\in I})}.$$
Since $\mathcal{C}$ is full-dimensional, $\mathcal{C}^{\vee_q}$ is non-degenerate. Moreover, we claim that
\begin{equation}\label{eq-closed}
\mathcal{C}^{\vee_q} = \mathrm{Cone}\,(\overline{\mathcal{A}},(e_i)_{i\in I}).
\end{equation}

We prove this claim. Clearly, the inclusion
$$\mathrm{Cone}\,(\overline{\mathcal{A}},(e_i)_{i\in I}) \subset  \overline{\mathrm{Cone}\,(\mathcal{A},(e_i)_{i\in I})}$$
holds. We are left to prove the other inclusion, i.e., that the set $\mathrm{Cone}\,(\overline{\mathcal{A}},(e_i)_{i\in I})$ is closed. By sorting the $(e_i)_{i\in I}$ according to the index $j(i)$, we obtain a finite decomposition
$$\mathrm{Cone}\,(\overline{\mathcal{A}},(e_i)_{i\in I})  = \bigoplus_{k\in j(I)} \mathrm{Cone}\,(\overline{\mathcal{A}_k},(e_i)_{i\in j^{-1}(k)}) \oplus \bigoplus_{k\in J\setminus j(I)} \overline{\mathcal{A}_k}.$$
Since a finite sum of closed cones is closed, we have
reduced to the case when $\mathcal{A}$ is an irreducible hyperbolic cone. In that case, $q$ is the hyperbolic quadratic form defining $\mathcal{A}$; see Example \ref{ex-hyp}. 

If $I$ is finite, the set $\mathrm{Cone}\,(\overline{\mathcal{A}},(e_i)_{i\in I})$ is clearly closed, and we are done. Otherwise, we fix an identification $I\simeq \N$. It is enough to consider a converging sequence $(v_n)_{n\in\N}$ in $V$ whose elements write
$$v_n = \sum_{i=1}^n \lambda_i^n e_i$$
with coefficients $\lambda_i^n\ge 0$ for all $1\le i\le n$ and all $n\in\N$, and prove that its limit $v$ belongs to $\mathrm{Cone}\,(\overline{\mathcal{A}},(e_i)_{i\in I})$.
Pick an element $c\in\mathcal{C}^{\circ}\cap V_{\Z}$. Since $q(e_i,c)\ge 1$ for all $i\in\N$, the existence of the limit
$$q(v,c)=\lim_{n\to\infty} \sum_{i=1}^n \lambda_i^n q(e_i,c)$$
forces the limit $\lim_{n\to\infty} \sum_{i=1}^n \lambda_i^n$ to exist as well. Thus, every sequence of the form $(\lambda_i^n)_{n\in\N}$ is bounded from above, and for all but finitely many values of $i$, it even converges to zero. Extracting an appropriate subsequence of $(v_n)_{n\in\N}$, we can assume that, for every $i\in\N$, the sequence $(\lambda_i^n)_{n\in\N}$ converges to a limit $\lambda_i\in\R_{\ge 0}$, and set a threshold $r$ such that $\lambda_i = 0$ for all $i\ge r+1$. We write
$$v = \sum_{i=1}^r \lambda_i e_i + a,$$ 
for some $a\in V$. It is worth noting that
$$a = \lim_{n\to\infty} \sum_{i=r+1}^n \lambda_i^{n} e_i.$$
If we can prove that $a$ lies in the cone $\overline{\mathcal{A}}$, we are done. Recall that $\mathcal{A}$ is the hyperbolic cone associated to the form $q$. Since, for the element $c\in\mathcal{A}$ picked previously, it holds
$$q(a,c) = \lim_{n\to\infty} \sum_{i= r+1}^n \lambda_i^n q(e_i,c) \ge 0,$$
it suffices to check that $q(a)\ge 0$ to conclude. Since the coefficients $\lambda_i^n$ are non-negative, by continuity and linearity of $q$ in the second argument, it is enough to check that $q(a,e_i)\ge 0$ for all $i\ge r+1$. We fix an index $i\ge r+1$. Since
$$q(a,e_i) = \lim_{n\to\infty} \lambda_i^n q(e_i,e_i) + \left(\sum_{\substack{j=r+1\\ j\neq i}}^n \lambda_j^n q(e_j,e_i)\right),$$
since $\lambda_i^n q(e_i,e_i)$ tends to zero when $n$ tends to $\infty$, 
and since each remaining summand $\lambda_j^n q(e_j,e_i)$ is non-negative, we obtain $q(a,e_i)\ge 0$, as wished. This proves Identity \eqref{eq-closed}.

We can now prove our characterization of $\{e_i \mid i\in I\}$ as the set of primitive generators of extremal rays of the dual cone $\mathcal{C}^{\vee_q}$ that are not contained in the cone $\overline{\mathcal{A}}$. By Identity \eqref{eq-closed}, every extremal ray of $\mathcal{C}^{\vee_q}$ is either contained in $\overline{\mathcal{A}}$, or contains one of the elements $e_i$. Conversely, let us prove that the primitive element $e_i$ spans an extremal ray of the cone $\mathcal{C}^{\vee_q}$. If we write
$$e_i = a + \sum_{k\in K} \lambda_k e_k,$$
with $K\subset I$ finite such that $i\notin K$, $a\in \overline{\mathcal{A}}$, and $\lambda_k > 0$, applying the reflection $\sigma_i$ yields
\begin{equation}\label{eq-ei}
 - e_i = \sigma_i(a) + \sum_{k\in K} \lambda_k e_k - \lambda_k\frac{2q(e_i,e_k)}{q(e_i)}e_i.
 \end{equation}
By Assumption (iii), we have
$$-\lambda_k\frac{2q(e_i,e_k)}{q(e_i)} \ge 0,$$ and since $\sigma_i$ preserves the cone $\mathcal{A}$, the right-handside of Identity \eqref{eq-ei} is in the cone $\mathcal{C}^{\vee_q}$. This contradicts the non-degeneracy of $\mathcal{C}^{\vee_q}$. Thus, the set $(e_i)_{i\in I}$ is indeed determined by the cones $\mathcal{C}$ and $\mathcal{A}$ and the quadratic form $q$.
\end{proof}

The next lemma is a clear consequence of the definition.

\begin{lemma}\label{lem-directsum}
A direct sum of well-clipped cones is itself well-clipped.
\end{lemma}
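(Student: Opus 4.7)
The plan is to check the three conditions of Definition \ref{def-nicelycutout} directly on a direct sum. Let $\mathcal{C}_1$ be well clipped in $\mathcal{A}_1 \subset V_1=V_{1,\Z}\otimes \R$ via hyperplanes $(H^{(1)}_i)_{i\in I_1}$, and $\mathcal{C}_2$ be well clipped in $\mathcal{A}_2 \subset V_2=V_{2,\Z}\otimes\R$ via hyperplanes $(H^{(2)}_k)_{k\in I_2}$, with respective forms $q_1,q_2$. Consider $V:=V_1\oplus V_2$ equipped with the lattice $V_{\Z}:=V_{1,\Z}\oplus V_{2,\Z}$ and with the cone $\mathcal{A}:=\mathcal{A}_1\oplus \mathcal{A}_2$. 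The latter is self-dual homogeneous (with respect to $q_1\oplus q_2$), since the product of two such cones is obviously so; concretely, the $\R$-indecomposable decomposition of $\mathcal{A}$ is the concatenation of those of $\mathcal{A}_1$ and $\mathcal{A}_2$.

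The natural candidate set of hyperplanes for $\mathcal{C}_1\oplus \mathcal{C}_2$ in $\mathcal{A}$ consists of the $H_i:=H^{(1)}_i\oplus V_2$ for $i\in I_1$ together with the $H_k:=V_1\oplus H^{(2)}_k$ for $k\in I_2$, each equipped with the obvious choice of positive connected component. Since passing to interiors and intersecting with an open cone commute with direct sums, one gets
$$\overset{\circ}{\mathcal{C}_1\oplus \mathcal{C}_2}
=\overset{\circ}{\mathcal{C}_1}\oplus \overset{\circ}{\mathcal{C}_2}
=\mathcal{A}\cap \bigcap_{i\in I_1\sqcup I_2} H_{i,+}.$$
For condition (i), each $H_i$ with $i\in I_1$ only modifies one $\R$-indecomposable summand of $\mathcal{A}_1$ (hence of $\mathcal{A}$), which is of hyperbolic type and defined over $V_{1,\Q}\subset V_{\Q}$, by condition (i) for $\mathcal{C}_1$; symmetrically for $i\in I_2$.

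For condition (ii), take as quadratic form on $V$ the orthogonal sum $q:=q_1\oplus q_2$. Then $q$ is $V_{\Z}$-integral, positive definite on the non-hyperbolic summands, and hyperbolic on those among the $\mathcal{A}_{j(i)}$. The reflection $\sigma_i$ associated to $H_i=H^{(1)}_i\oplus V_2$ is nothing but $\sigma^{(1)}_i\oplus \id_{V_2}$, which stabilises $V_{\Z}$ by condition (ii) for $\mathcal{C}_1$; the symmetric argument works for $k\in I_2$.

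For condition (iii), if both indices lie in $I_1$ (resp.\ in $I_2$), the perpendicular vectors $e_i,e_k$ lie in $V_1$ (resp.\ $V_2$) and the inequality $q(e_i,e_k)\ge 0$ follows from condition (iii) for $\mathcal{C}_1$ (resp.\ $\mathcal{C}_2$); if the indices lie in distinct $I_j$, then $e_i$ and $e_k$ lie in the $q$-orthogonal factors $V_1,V_2$, so $q(e_i,e_k)=0\ge 0$. Every condition of Definition \ref{def-nicelycutout} is thus satisfied, and $\mathcal{C}_1\oplus \mathcal{C}_2$ is well clipped in $\mathcal{A}$. The only genuinely bookkeeping step is the compatibility of the $\R$-indecomposable decomposition of $\mathcal{A}$ with the one of its summands, but that is immediate from the uniqueness part of Theorem \ref{thm-classification}. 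An induction extends this to finite direct sums, concluding the proof.
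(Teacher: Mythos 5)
Your proof is correct and takes essentially the same approach as the paper's, which also forms the direct sum $\mathcal{A}_1 \oplus \mathcal{A}_2$, pulls back the cutting hyperplanes, and checks conditions (i)--(iii) with the split quadratic form $q_1 \oplus q_2$. The paper's version is terser (the verification of (i)--(iii) is declared ``immediate''), while you spell out the three conditions explicitly; in particular you correctly observe that the cross-factor case of condition (iii) is handled by the $q$-orthogonality of $V_1$ and $V_2$, which makes $q(e_i,e_k)=0$.
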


\begin{proof}
Consider a cone $\mathcal{C}=\mathcal{C}_1\oplus\mathcal{C}_2$, where $\mathcal{C}_u$ is well-clipped in a self-dual homogeneous cone $\mathcal{A}_u$ in $V_u=V_{u,\Q}\otimes \R$ for $u=1,2$. Note that $\mathcal{A}:=\mathcal{A}_1\oplus\mathcal{A}_2$ is a self-dual homogeneous cone in $V:=V_1\oplus V_2$ with respect to the rational structure $V_{\Q}=V_{1,\Q}\oplus V_{2,\Q}$. Clearly, the pullbacks of the hyperplanes cutting out $\mathcal{C}_1$ in $\mathcal{A}_1$ and $\mathcal{C}_2$ in $\mathcal{A}_2$ still cut out $\mathcal{C}$ in $\mathcal{A}$.
Checking the assumptions of Definition \ref{def-nicelycutout} is immediate as long as we take the split quadratic form $q=q_1\oplus q_2$ for Assumptions (ii) and (iii).
\end{proof}

We provide examples of well-clipped cones, with an algebro-geometric reader in mind.

\begin{example}\label{ex-list} This list of examples is by far not exhaustive.
\begin{enumerate}
\item A simplicial cone in $\R^n$ is always well-clipped. Indeed, it decomposes as a direct sum of half-lines over $\R$. In particular, any non-degenerate convex cone in $\R^2$ is well-clipped.
\item The nef cone of a smooth del Pezzo surface $\Sigma$ is rational polyhedral, and well-clipped. For $\rho(\Sigma)=1$ or $2$, it follows from Item (1). Otherwise, we can take the quadratic form to be the intersection form on $N^1(\Sigma)$, and the $(e_i)_{i\in I}$ in Notation \ref{note} and Lemma \ref{lem-roots} to be the finitely many classes of $(-1)$-curves.
\item The nef cone of a smooth projective K3 surface is well-clipped by \cite{Sterk85}.
\item The nef cone of a smooth projective surface underlying a klt Calabi--Yau pair can be described as the intersection of a well-clipped cone, namely the dual cone
$$(\overline{\mathrm{NE}}(S)_{K_S\le 0})^{\vee}$$
with a finite collection of rational halfspaces that is invariant under the action of $\Aut(S)$, see Lemma \ref{lem-surface-nicely} below. The nef cone is not necessarily well-clipped itself, see Example \ref{ex-rationalsurface}.
\item The movable cone of a smooth projective hyperkähler manifold is well-clipped by \cite[Lemma 6.22]{Markman11} and \cite[Theorem 1.1]{Markman13}.
\item The movable cone of a projective primitive symplectic variety with terminal $\Q$-factorial singularities is well-clipped \cite[Theorem 3.10, Lemma 4.6]{LMP24}; \cite[Definition 3.1, Theorem 3.11]{KMPP19}.
\item The movable cone of an abelian variety is self-dual homogeneous, thus well-clipped \cite[Theorem 4.3 and the two paragraphs thereafter]{PS12a}.
\item The nef cone of $\P^2$ blown up at $9$ general points and the movable cone of $\P^3$ blown up at 8 very general points are both well-clipped, by \cite{Nagata60} and \cite{SX23} respectively.
\item The movable cone of a product of normal projective varieties
$$X=\prod_{1\le i\le k} X_i,$$
with $h^1(\mathcal{O}_{X_i}) = 0$ for $2\le i\le k$
is well-clipped if and only if the movable cone of each factor is well-clipped. This follows from \cite[Exercise III.12.6]{HarBook}, Lemmas \ref{lem-mov-product} and \ref{lem-directsum}.
\end{enumerate}
\end{example}

We prove a result backing up Item (4) in Example \ref{ex-list} above.

\begin{lemma}\label{lem-surface-nicely}
Let $S$ be a smooth projective surface underlying a klt Calabi--Yau pair $(S,\Delta)$. The cone $\mathcal{C}_S := (\overline{\mathrm{NE}}(S)_{K_S\le 0})^{\vee}$ is well-clipped, and the cone $\Nef(S)$ can be described as the intersection of $\mathcal{C}_S$ with a finite collection of rational halfspaces that is determined by the isomorphism class of the surface $S$, and in particular invariant under the action of $\Aut(S)$.
\end{lemma}

\begin{proof}
We use the intersection form $q_S$ of $S$ to define the hyperbolic cone $\mathcal{H}(q_S)$ (that contains all ample divisor classes) in the Néron--Severi space $N^1(S)$. By duality, to show that the cone $\mathcal{C}_S$ is well-clipped in $\mathcal{H}(q_S)$, it suffices to prove that any reduced irreducible curve $E$ on $S$ with $-K_S\cdot E \ge 0$ and $E^2 < 0$ induces an integral orthogonal reflection. Showing that $E^2 \in \{-1,-2\}$ would conclude by Remark \ref{rem-hypAss}. Since $S$ is smooth, $E^2$ is an integer. By the genus formula \cite[I.15]{Beauv84}, we have 
$$E^2\ge E^2 + K_S\cdot E = 2h^1(E,\mathcal{O}_E) - 2 \ge -2,$$
as wished.  %%Since $E$ spans an extremal ray of the Mori cone of $S$, the cone theorem gives an elementary Mori contraction $\eps: S\to S'$ with exceptional divisor $E$. Computing discrepancies, one checks that $E$ is contracted to a canonical singularity in $S'$. If it is terminal, since $S'$ is a surface, it is a smooth point, and by Castelnuovo's contraction theorem, $E$ has square $-1$ in $S$, contradiction. Otherwise, it is an $A_1$-singularity, so $E$ has square $-2$ in $S$, which is again a contradiction. 
This shows that $\mathcal{C}_S$ is a well-clipped cone.

Since $-K_S\equiv \Delta$ is pseudoeffective, we can consider its Zariski decomposition:
$-K_S = P+N,$
where $P$ and $N$ are $\Q$-divisors, with $P$ nef and $N$ effective, and such that, denoting by $(N_i)_{1\le i \le r}$ the irreducible components of $N$, the intersection matrix $(N_i\cdot N_j)_{1\le i,j\le r}$ is negative definite. The Mori cone of $S$ is spanned by its $K_S$-non-positive part and by $\{N_i\}_{1\le i \le r}$. Dualizing, we obtain
$$\Nef(S) = \mathcal{C}_S\cap \bigcap_{i=1}^r \{D\in N^1(X)\mid D\cdot N_i\ge 0\}.$$  By uniqueness of the Zariski decomposition of $-K_S$, the collection of halfspaces derived from the $\{N_i\}_{1\le i\le r}$ is preserved by isomorphisms of surfaces, and in particular invariant under the action of $\Aut(S)$.
\end{proof}

\subsection{Non-examples and questions on well-clipped cones}

In this subsection, we describe various cones that cannot be well-clipped. Although we personally find these examples instructive, they are independent of the main results of this paper, and can be skipped in a first read.

\begin{example}\label{ex-ratpolwellclipped} We construct a rational polyhedral cone that is not well-clipped. 

Consider the circle $x^2+y^2=1$ in $\R^2$, and pick thirteen distinct points $v_1,\ldots ,v_{13}$ on it in that order. We take their convex hull and projectivize, to obtain a polyhedral cone $\mathcal{C}$ with $13$ extremal rays in $\R^3$. 

Arguing by contradiction, assume that $\mathcal{C}$ is well-clipped in some self-dual homogeneous cone $\mathcal{A}$. Since $\mathcal{C}$ is polyhedral in $\R^3$ and not simplicial, $\mathcal{A}$ must be of hyperbolic type. Let $q$ be the hyperbolic quadratic form defining $\mathcal{A}$. Pick an affine hyperplane $H$ in which $q$ restricts to the equation of a circle in $\R^2$. Denote by $\ell_1,\ldots,\ell_{13}$ the lines in $H$ that define the sides of the $13$-gon $\mathcal{C}\cap H$. Note that in $H$, the oriented angle measures between these lines satisfy
$$\cos^2(\ell_i,\ell_j)=\frac{q(e_i,e_j)^2}{q(e_i)q(e_j)},$$
since $\ell_i$ is the polar line to the point $e_i$ with respect to the circle defined by $q$ in $H$.
By Lemma \ref{lem-coxeter} below, this implies that
$$(\ell_i,\ell_j)\in \left\lbrace 0,\frac{\pi}{6},\frac{\pi}{4},\frac{\pi}{3},\frac{\pi}{2},\frac{2\pi}{3},\frac{3\pi}{4},\frac{5\pi}{6}\right\rbrace.$$
Summing the angles of the $13$-gon $\mathcal{C}\cap H$ and writing $\ell_{14}:=\ell_1$, this yields
$$11\pi = \sum_{i=1}^{13}(\ell_i,\ell_{i+1}) \le 13\cdot \frac{5\pi}{6},$$
a contradiction.
\end{example}

\begin{example}\label{ex-schoen} We describe the movable cone of a smooth Calabi--Yau threefold that is not well-clipped, but nonetheless satisfies the Morrison-Kawamata cone conjecture.
Let $X=S_1\times_{\P^1}S_2$ be a Schoen threefold, as in \cite{Sc88,Na91,GM93}. It satisfies the nef cone conjecture by \cite{GM93}, as well as the movable cone conjecture by \cite{GLSW24}. We claim that neither the nef cone, nor the movable cone of $X$ are well-clipped.

Recall that $N^1(X) = p_1^*N^1(S_1)+p_2^*N^1(S_2)$, where $p_i:X\to S_i$ denote the two projections. Recall that there also is a fibration $f:X\to \P^1$ that factors through both $p_i$, and satisfies
$$p_1^*N^1(S_1)\cap p_2^*N^1(S_2) = f^*N^1(\P^1).$$ 
Here, we have $\rho(X)=19$ and $\rho(S_1)=\rho(S_2)=10$. Recall also that
%% $\Aut(X)={\rm Bir}(X)$ has a normal abelian subgroup of index $4$ that is isomorphic to $\Z^{16}$. Finally, recall that the classes in $N^1(X)$ that have infinite ${\rm Aut}(X)$-stabilizer are precisely those in $p_1^*N^1(S_1) \cup p_2^*N^1(S_2)$. In particular, since $X$ satisfies the nef and the movable cone conjectures, 
the cones $\Nef(X)$ and $\Mov(X)$ are locally rational polyhedral at a non-zero boundary point $v$ if and only if $v\notin p_1^*N^1(S_1) \cup p_2^*N^1(S_2)$.
Let $\mathcal{C}$ denote the nef or the movable cone of $X$ and denote by $\mathfrak{S}$ the set of boundary points where $\mathcal{C}$ is not locally rational polyhedral, i.e.,
$$\mathfrak{S}=\partial\mathcal{C}\cap (p_1^*N^1(S_1) \cup p_2^*N^1(S_2))\setminus \{0\}.$$

Arguing by contradiction, we assume that $\mathcal{C}$ is well-clipped in a self-dual homogeneous cone $\mathcal{A}$, which decomposes as
$$\mathcal{A} = \bigoplus_{j=1}^r \mathcal{A}_j.$$
If one of the $\mathcal{A}_j$ has dimension $3$ or higher, then Theorem \ref{thm-classification} shows that $\partial \mathcal{A}_j$ is nowhere locally rational polyhedral. If moreover $\mathcal{A}_j$ is not of hyperbolic type, this provides an open set of the boundary $\partial\mathcal{C}$ that is contained in $\mathfrak{S}$, a contradiction.
Hence each $\mathcal{A}_j$ is a halfline or a hyperbolic cone.

Let $v\in\mathfrak{S}$. We decompose $v=\sum_{j=1}^r v_j$ with $v_j\in \overline{\mathcal{A}_j}\cap \mathcal{C}$ and see that there exists $k\in J$ for which $\mathcal{A}_k$ is of hyperbolic type and $q_k(v_k)=0$. We then note that near any point of
$$\R_{>0}\, v_k \oplus \bigoplus_{\substack{1\le j \le r\\ j\neq k}}\overline{\mathcal{A}_j}\cap \mathcal{C},$$ 
the cone $\mathcal{C}$ is not locally rational polyhedral. In particular, this provides a subspace of $\mathfrak{S}$ containing $v_k$ of dimension $19 - \dim \mathcal{A}_k + 1$, thus $\dim \mathcal{A}_k\ge 10$.

We apply this inequality for two points $v$ as above, one in each of the linear subspaces $p_i^*\Nef(S_i)\setminus  f^*\Nef(\P^1)$ contained in the boundary of $\mathcal{C}$. This provides two indices $k_1\neq k_2$ such that $\dim \mathcal{A}_{k_i} \ge 10$. But $10+10 > 19$, a contradiction.
\end{example}

An important fact to keep in mind, mentioned by Totaro in \cite[Section 3]{To10}, is how Assumption (ii) of Definition \ref{def-nicelycutout} tends to fail in presence of canonical surface singularities. We present an example inspired by \cite[Section 3]{To10}.

\begin{example}\label{ex-kltK3}
Let $E,F$ be elliptic curves, let $\Sigma = E\times F /\langle -1\rangle$. Let $S$ be the partial minimal resolution of $\Sigma$ obtained by resolving all $A_1$-singularities except for one, say the image of $0\in E\times F$. The surface $S$ is a K3 surface with canonical singularities in the sense of \cite{GGK}. 

Let $C$ denote the strict transform in $S$ of the image of $E\times \{0\}$ in $\Sigma$. It is is a smooth rational curve passing through a single $A_1$-singular point of $S$, thus $C^2 = -\frac{3}{2}$. Meanwhile, the exceptional divisor $D_p$ over the image of a $2$-torsion point $(p,0)$ satisfies $C\cdot D_p = 1$. The class of $C$ spans an extremal ray of the Mori cone $\overline{\rm NE}(S)$, and therefore defines a necessary hyperplane of the nef cone $\Nef(S)$ (see Remark \ref{rem-minimal}). However, the unique orthogonal reflection with respect to that hyperplane sends the Cartier divisor $D_p$ to $D_p + \frac{4}{3}C$, which is not integral.
\end{example}

We thank the referee for suggesting the following example.

\begin{example}\label{ex-rationalsurface}
Let $j$ denote the first third root of unity and $E_j$ the elliptic curve with complex multiplication by $j$. Consider the surface 
$$S_0=E_j\times E_j/\langle\mathrm{diag}(j,j)\rangle.$$ 
Let $\varepsilon:S\to S_0$ denote its minimal resolution. Since $S_0$ has nine isolated singularities, and since each of them is of type $\frac{1}{3}(1,1)$, the resolution $\varepsilon$ has nine exceptional divisors $C_1,\ldots, C_9$, each of them of square $-3$ in $S$.
It also holds $$K_S+\frac{1}{3}(C_1+\ldots + C_9) = \varepsilon^*K_{S_0} \equiv 0,$$
so the surface $S$ underlies a klt Calabi--Yau pair.

Since $C_1$ spans an extremal ray of $\overline{\mathrm{NE}}(S)$ and has square $-3$, we already see that the cone $\Nef(S)$ is not well-clipped in the hyperbolic cone induced by the intersection form on $S$. We now prove, more generally, that $\Nef(S)$ is not well-clipped in any self-dual homogeneous cone.

By contradiction, assume that $\Nef(S)$ is well-clipped in a self-dual homogeneous cone $\mathcal{A}$. Note that 
\begin{itemize}
\item $\rho(S_0) = 4$ and $\rho(S) = 13$;
\item $\Nef(S_0)\simeq \Nef(E_j\times E_j)$ is an irreducible self-dual homogeneous cone that is not hyperbolic;
\item $\varepsilon^*\Nef(S_0)$ appears as a face of dimension 4 of the cone $\Nef(S)$;
\item for each $1\le i\le 9$, the hyperplane $H_i = \{D\in N^1(S)\mid D\cdot C_i = 0\}$ supports the cone $\Nef(S)$ along a face of codimension $1$.
\end{itemize}
From these facts, an argument along the lines of Example \ref{ex-schoen} forces $\mathcal{A}$ to decompose into the following irreducible self-dual homogeneous summands:
$$\mathcal{A}=\varepsilon^*\Nef(S_0)\oplus \mathcal{H},$$
where $\mathcal{H}$ is a hyperbolic cone lying in a $9$-dimensional subspace of $N^1(S)$, which we denote by $V$. The hyperplanes $H_i$ all intersect the interior of $\mathcal{H}$ in $V$.%, while 
%$$V\cap H_1\cap\ldots\cap H_9 = V\cap \varepsilon^*N^1(S_0) = \{0\}.$$

Since $\Nef(S)$ is well-clipped in $\mathcal{A}$, by Assumption (i) of Definition \ref{def-nicelycutout}, we must be able to clip it with hyperplanes of the form $\varepsilon^*N^1(S_0)\oplus W$, where $W\subset V$ is itself a hyperplane. 
Set $B$ to be the image of the elliptic curve $E_j\times\{0\}$ in the quotient surface $S_0$ and let $B'$ denote its strict transform in $S$. Since $B'$ is a $(-1)$-curve, it spans an extremal ray of $\overline{\mathrm{NE}}(S)$, and so the hyperplane
$$\{D\in N^1(S)\mid D\cdot B' = 0\}$$
supports the cone $\Nef(S)$ along a face of codimension $1$. However, since $\varepsilon_*B'= B$ is not numerically trivial on $S_0$, this hyperplane does not contain the subspace $\varepsilon^*N^1(S_0)$, a contradiction.
\end{example}

The next question, if it had an affirmative answer, would be a great motivation to try and understand cones of positive divisors arising from algebraic geometry in terms of well-clipped cones. However, there seems to me to be no reason to expect a positive answer to it, even for movable cones in dimension $3$.

\begin{question}
Is the movable cone of a smooth projective Fano variety well-clipped?
\end{question}

\subsection{Fundamental domains and well-clipped cones}

We start with a simple lemma.

\begin{lemma}\label{lem-coxeter}
Let $\mathcal{C}$ be a cone in $V=V_{\Z}\otimes \R$ that is well-clipped in some self-dual homogeneous cone $\mathcal{A}$ with quadratic form $q$ by hyperplanes $(H_i)_{i\in I}$. Then, for every $i,j\in I$ such that $H_i\neq H_j$, we have
$$\frac{q(e_i,e_j)^2}{q(e_i,e_i)q(e_j,e_j)} \in \left\lbrace \cos^2\left(\frac{\pi}{n_{ij}}\right) \mid n=2,3,4,6,\infty \right\rbrace \cup (1,\infty).$$
\end{lemma}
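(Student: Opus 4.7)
The plan is to reduce everything to the restriction of $q$ to the real 2-plane $P := {\rm Span}_{\R}(e_i, e_j) \subset V$, which is genuinely 2-dimensional because $H_i \neq H_j$ forces $e_i, e_j$ to be linearly independent. Since both $q(e_i,e_i)$ and $q(e_j,e_j)$ are negative, the sign of the Gram determinant $q(e_i,e_i)q(e_j,e_j) - q(e_i,e_j)^2$ dictates the signature of $q|_P$ and decides directly where the ratio $\frac{q(e_i,e_j)^2}{q(e_i,e_i)q(e_j,e_j)}$ sits: if it is negative, then $q|_P$ has signature $(1,1)$ and the ratio is $>1$, landing in $(1,\infty)$; if it is zero, the ratio is exactly $1 = \cos^2(\pi/\infty)$; if it is positive, $q|_P$ is negative definite and the ratio lies in $[0,1)$ by the Cauchy--Schwarz inequality applied to the positive definite form $-q|_P$.

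Only the negative definite case is interesting, and there I would use Assumption (ii): both $\sigma_i$ and $\sigma_j$, hence $\sigma_i\sigma_j$, preserve the lattice $V_{\Z}$, so the trace of $\sigma_i\sigma_j$ on $V$ is an integer. Non-degeneracy of $q|_P$ gives the orthogonal decomposition $V = P \oplus P^{\perp}$, with $P^{\perp}$ pointwise fixed by both reflections, so $\sigma_i\sigma_j$ acts as the identity on $P^{\perp}$. On $(P, -q|_P)$, which is now a genuine Euclidean plane, $\sigma_i|_P$ and $\sigma_j|_P$ are honest orthogonal reflections across the lines $e_i^{\perp}\cap P$ and $e_j^{\perp}\cap P$, and their product is a planar rotation whose angle is $2\phi$, where $\phi\in[0,\pi]$ is the angle between $e_i$ and $e_j$ in $-q|_P$. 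In particular the ratio we want is precisely $\cos^2\phi$.

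Combining the two contributions gives
$${\rm tr}(\sigma_i\sigma_j) = (\dim V - 2) + 2\cos(2\phi) \in \Z,$$
hence $2\cos(2\phi) \in \Z \cap [-2,2]$, so $\cos^2\phi = \frac{1+\cos(2\phi)}{2} \in \{0, \tfrac{1}{4}, \tfrac{1}{2}, \tfrac{3}{4}, 1\}$. The outlier $\cos^2\phi = 1$ is ruled out: Assumption (iii) gives $q(e_i,e_j)\ge 0$ and hence $\cos\phi \le 0$, so $\phi\in[\pi/2, \pi]$, and $\phi=\pi$ would make $e_i$ and $e_j$ antiparallel, contradicting $H_i \neq H_j$. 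The remaining four values are exactly $\cos^2(\pi/n)$ for $n\in\{2,3,4,6\}$, finishing the proof. The only step with any content is the integrality of $2\cos(2\phi)$---a crystallographic restriction in disguise---which I do not expect to cause difficulty since it follows immediately from Assumption (ii); the rest is a careful bookkeeping of signatures and angles.
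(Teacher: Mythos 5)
Your proof is correct, but it takes a genuinely different (and somewhat longer) route than the paper's. The paper extracts the integrality directly from Assumption~(ii): since $\sigma_i$ preserves $V_{\Z}$ and $e_j\in V_{\Z}$, the quantity $m:=-2q(e_i,e_j)/q(e_i,e_i)$ is a non-negative integer, and by symmetry so is $n:=-2q(e_i,e_j)/q(e_j,e_j)$ (non-negativity comes from (iii) together with $q(e_i,e_i),q(e_j,e_j)<0$). Then
$$\frac{q(e_i,e_j)^2}{q(e_i,e_i)\,q(e_j,e_j)}=\frac{mn}{4}\in\tfrac14\Z_{\ge 0},$$
and recognizing $0,\tfrac14,\tfrac12,\tfrac34,1$ as $\cos^2(\pi/n)$ for $n=2,3,4,6,\infty$ finishes it; there is no need to split on the signature of $q|_P$ because values $>1$ are swallowed by $(1,\infty)$ automatically. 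Your argument instead packages the same integrality into $\mathrm{tr}(\sigma_i\sigma_j)\in\Z$ and then decodes it as a crystallographic restriction on the rotation angle in the Euclidean plane $(P,-q|_P)$. This is conceptually appealing, but it forces you to first establish $V=P\oplus P^\perp$ (hence to restrict to the case $q|_P$ non-degenerate), to separate the indefinite, degenerate, and definite cases, and to argue separately that $\cos^2\phi<1$ in the definite case. Note also that this last exclusion is already automatic once you know $\det(q|_P)>0$: the Cauchy--Schwarz inequality for $-q|_P$ on linearly independent vectors is strict, so your appeal to Assumption~(iii) and $H_i\neq H_j$ at the very end is redundant. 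In short, what Assumption~(ii) really gives you is that the two ``Cartan-like'' integers $m,n$ are \emph{individually} non-negative integers, which is a sharper and more immediately usable fact than the integrality of $mn-2=\mathrm{tr}(\sigma_i\sigma_j|_P)$ alone; the paper's proof is the direct exploitation of that.
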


\begin{proof}
By Assumptions (ii) and (iii) of Definition \ref{def-nicelycutout} and Lemma \ref{lem-roots} (3) and (4), we see that
$$\frac{q(e_i,e_j)^2}{q(e_i,e_i)q(e_j,e_j)} \in \left\lbrace 0, \frac{1}{4}, \frac{1}{2}, \frac{3}{4}\right\rbrace \cup (1,\infty).$$
Checking the squares of the appropriate values of cosine concludes the proof.
%Now, recall that
%$$\cos^2\left(\frac{\pi}{2}\right) = 0,\;
%\cos^2\left(\frac{\pi}{3}\right) = \frac{1}{4},\; 
%\cos^2\left(\frac{\pi}{4}\right) = \frac{1}{2},\; 
%\cos^2\left(\frac{\pi}{6}\right) = \frac{3}{4},
%\mbox{ and }\cos^2(0)=1,$$
%and this concludes the proof.
\end{proof}

\begin{corollary}\label{cor-Cfunddom}
Let $\mathcal{C}$ be a well-clipped cone in a self-dual homogeneous cone $\mathcal{A}$ in $V=V_{\Z}\otimes \R$. Then the cone $\overline{\mathcal{C}}$ is a fundamental domain for the action of the group generated by the orthogonal reflections $(\sigma_i)_{i\in I}$ with respect to the side hyperplanes of $\mathcal{C}$ on the cone $\mathcal{A}^+$.
% This action satisfies the disjoint interior property:
%$$\forall\, w\in \langle \sigma_i\mid i\in I\rangle
%\setminus \{1\},
%\quad
%\mathcal{C}^{\circ}\cap w(\mathcal{C}^{\circ}) = \emptyset,$$
\end{corollary}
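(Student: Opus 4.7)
The plan is to derive the statement from Vinberg's classical theorem on discrete linear reflection groups in hyperbolic cones, after first reducing to the $\R$-indecomposable hyperbolic summands of $\mathcal{A}$ using Assumption (i) of Definition \ref{def-nicelycutout}.

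As a first step, I would decompose along $\R$-indecomposable summands. Writing $\mathcal{A}=\bigoplus_{j\in J}\mathcal{A}_j$ and $V=\bigoplus_j V_j$ with $V_j:=\mathrm{Span}_{\R}\,\mathcal{A}_j$, Assumption (i) says that each hyperplane $H_i$ has the form $H_i=(H_i\cap V_{j(i)})\oplus\bigoplus_{k\neq j(i)}V_k$, so the $q$-orthogonal reflection $\sigma_i$ acts as a hyperbolic reflection on $V_{j(i)}$ and trivially on every other summand. Partitioning $I=\bigsqcup_j I_j$ according to the map $i\mapsto j(i)$ (only indices $j$ with $\mathcal{A}_j$ of hyperbolic type contribute a nonempty $I_j$) and setting $\mathcal{C}_j:=\overline{\mathcal{A}_j\cap\bigcap_{i\in I_j}H_{i,+}}\subset V_j$ (with $\mathcal{C}_j:=\overline{\mathcal{A}_j}$ when $I_j=\emptyset$), one has $\overline{\mathcal{C}}=\bigoplus_j\mathcal{C}_j$, $\mathcal{A}^+=\bigoplus_j\mathcal{A}_j^+$, and $W:=\langle\sigma_i\mid i\in I\rangle=\prod_j W_j$ with $W_j$ acting only on $V_j$. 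The statement thus reduces to proving, for each hyperbolic summand with $I_j\neq\emptyset$, that $\mathcal{C}_j$ is a fundamental domain for $W_j$ on $\mathcal{A}_j^+$.

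Second, I would apply Vinberg's theorem in each hyperbolic piece. Assumption (ii) guarantees that each $\sigma_i$ preserves $V_{\Z}$, so $W_j$ is a discrete subgroup of $\mathrm{GL}(V_j)$. By Lemma \ref{lem-coxeter}, any two distinct walls of $\mathcal{C}_j$ either meet at a dihedral angle $\pi/n_{ik}$ with $n_{ik}\in\{2,3,4,6\}$, or else are parallel or ultra-parallel (the ratio $\ge 1$ case). These are precisely the hypotheses of Vinberg's theorem on discrete linear groups generated by reflections in a hyperbolic cone, whose conclusion is that $W_j$ is a Coxeter group and $\mathcal{C}_j$ is a fundamental polyhedron for $W_j$ acting on the open hyperbolic cone $\mathcal{A}_j$.

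Finally, I would reassemble and extend to the boundary. The direct sum decomposition then immediately yields that $\overline{\mathcal{C}}$ is a fundamental domain for $W$ on the open cone $\mathcal{A}$. The remaining step is to upgrade this from $\mathcal{A}$ to $\mathcal{A}^+$, which on each hyperbolic summand amounts to adjoining certain rational boundary rays, most notably the rational isotropic rays (i.e., the rational cusps of $\mathcal{C}_j$). I expect this upgrade to be the main obstacle of the proof: one must check that every rational boundary point is sent into $\mathcal{C}_j$ by some element of $W_j$, and that the resulting representative is unique up to the standard boundary identifications along walls of $\mathcal{C}_j$. I would handle this by combining continuity of the $W_j$-action, density of rational interior points, and the standard description of cusp stabilizers in Vinberg fundamental polyhedra; all the other steps are formal consequences of the decomposition and a direct appeal to Vinberg's theorem.
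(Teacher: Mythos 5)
Your Steps 1 and 2 are fine and roughly parallel the paper's argument: the paper also relies on the dihedral-angle constraints of Lemma \ref{lem-coxeter} and on Vinberg's theorem \cite[Theorem 1, Proposition 6]{Vin71}, although it applies Vinberg only \emph{locally} (on a well-chosen rational polyhedral subcone of $\overline{\mathcal{C}}$ containing a point of $\mathcal{C}^{\circ}\cap w(\mathcal{C}^{\circ})$) to establish the disjoint-interior property, rather than globally on each hyperbolic summand. The decomposition into hyperbolic pieces you describe is correct and is implicit in the paper.

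The genuine gap is exactly where you flag it: Step 3, the passage from $\mathcal{A}$ to $\mathcal{A}^+$. Listing ``continuity of the $W_j$-action, density of rational interior points, and the standard description of cusp stabilizers'' is not an argument but a promissory note; in particular, ``density of rational interior points'' does not help, since the closure of a fundamental domain for an action on an open cone need not be a fundamental domain for the action on its rational closure, and the cusp-stabilizer route would require a separate, nontrivial analysis of the action near each rational isotropic ray. The paper avoids this difficulty entirely: its covering argument is a wall-crossing minimization applied \emph{directly} to a point $a\in\mathcal{A}^+$ (using a fixed interior basepoint $c\in\mathcal{C}^{\circ}$, so that the segment $(a,c]$ lies in $\mathcal{A}$ and crosses only finitely many mirror hyperplanes), after which reflecting across a crossed wall strictly decreases the crossing count, yielding a representative in $\overline{\mathcal{C}}$. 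If you want to salvage your approach, you should replace the vague ``upgrade'' step with an argument of this kind, or with an explicit proof that the $W_j$-translates of $\mathcal{C}_j$ reach every rational boundary point of $\mathcal{A}_j^+$; as written, the proposal does not close the loop.
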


\begin{proof}
We first check the disjoint interior property. Let $w\in\langle \sigma_i\mid i\in I\rangle$ be such that 
$\mathcal{C}^{\circ}\cap w(\mathcal{C}^{\circ}) \neq \emptyset$.
Note that $w$ can be written as the product of finitely many reflections within our set of generators, say $w\in\langle \sigma_1,\ldots \sigma_m\rangle$. We pick a rational polyhedral cone $\Pi\subset \overline{\mathcal{C}}$ such that 
\begin{itemize}
\item The hyperplanes fixed by $\sigma_1,\ldots,\sigma_m$ support the cone $\Pi$ along faces of codimension $1$.
\item We have $\Pi^{\circ}\cap w(\Pi^{\circ})\neq \emptyset$.
\end{itemize}
By Lemma \ref{lem-coxeter}, we can apply \cite[Theorem 1, Proposition 6]{Vin71} to the cone $\Pi$. We obtain that $w=1$, which concludes the proof of the disjoint interior property.

\medskip

We now check the covering property. We fix a point $c\in\mathcal{C}^{\circ}$. For $a\in\mathcal{A}^+$, the line segment joining $a$ and $c$ is compact. Since the reflection group $W=\langle \sigma_i \mid i\in I\rangle$ is discrete, that line segment only crosses finitely many of the hyperplanes of the form $P_j := \ker(r_j-\mathrm{id}_V)$, where $\{r_j\}_{j\in J}$ denotes the at most countable set of all orthogonal hyperplane reflections in $W$. For every $j\in J$, we denote by $u_j$ an element of 
$P_j^{\perp}\cap V_{\Z}.$
We denote by $N_a$ the number of hyperplanes crossed (if $a$ is contained in some of the hyperplanes, we do not count them).
We now fix $a\in\mathcal{A}^+$, and choose an element $m$ in the orbit $W\cdot a$ that minimizes $N_{m}$. 

We claim that $N_{m}=0$. Assume by contradiction that $N_{m}\ge 1$. Then, we can take a crossing hyperplane $P_j$ for the segment $(m,c)$. Note that $q(c,u_j) > 0$ and $q(m,u_j)< 0$. Drawing in the plane containing $c,m,r_j(m)$, we have a triangle in $\R^2$. The crossing hyperplanes restrict to lines in $\R^2$. By Pasch's axiom, each line must satisfy exactly one of the following three propositions:
\begin{itemize}
\item[(i)] it is disjoint from the triangle;
\item[(ii)] it contains a vertex of the triangle;
\item[(iii)] it intersects exactly two sides of the triangle in their relative interiors.
\end{itemize}
We deduce two facts:
\begin{itemize}
\item The hyperplane $P_j$ does not cross the segment $(r_j(m),c)$. 
\item The involution $r_j$ induces a bijection
\begin{align*}
\{P_k\mid k\neq j, \,P_k\mbox{ intersects } 
& (m,c)\mbox{ but not }(r_j(m),c)\} \\
&{\scriptstyle r_j}{\updownarrow}\\
\{P_k\mid k\neq j, \,P_k\mbox{ intersects } 
& (r_j(m),c)\mbox{ but not }(m,c)\}.
\end{align*}
Indeed, let $P_k$ be a hyperplane that does not cross $(m,c)$ but crosses $(r_j(m),c)$. Then $P_k$ crosses $(r_j(m),m)$. By convexity, $P_k$ also crosses any segment of the form $(r_j(m),t)$, for $t$ on the segment $[c,m]$. We apply this remark to the intersection point of the two segments $(c,m)$ and $(r_j(c),r_j(m))$, that is
$$t = q(c,u_j) m - q(m,u_j) c,$$
%\begin{align*}
%t &:= q(c,u_j) r_j(m) - q(m,u_j) r_j(c) \in (r_j(c), r_j(m)) \\
%& = q(c,u_j) \left(m - \frac{2q(m,u_j)}{q(u_j,u_j)} u_j\right)
%- q(m,u_j) \left(c - \frac{2q(c,u_j)}{q(u_j,u_j)} u_j\right)\\
%& = q(c,u_j) m - q(m,u_j) c \in (c, m).
%\end{align*}
and obtain that $P_k$ crosses the segment $(r_j(m),t) \subset (r_j(m),r_j(c))$. Hence, the hyperplane $r_j(P_k)$ crosses the segment $(c,m)$. It still crosses the segment $(m,r_j(m))$, thus does not cross the segment $(c,r_j(m))$. So the reflection $r_j$ induces a bijection such as described above.
\end{itemize}
We derive $N_{r_j(m)} = N_m - 1$, a contradiction.
Since $N_{m}=0$, the point $m$ belongs to $\overline{\mathcal{C}}$. This shows the covering property.
\end{proof}

We introduce the notions of a {\it neatly clipped} and a {\it perfectly clipped} cone.

\begin{definition}\label{def-neatlyclipped}
Let $V=V_{\Z}\otimes \R$ be a finite dimensional vector space and let $\mathcal{A}$ be a self-dual homogeneous cone in $V$. We say that a full-dimensional non-degenerate convex cone $\mathcal{C}$ is 
\begin{itemize}
\item {\it neatly clipped in} $\mathcal{A}$ if $\mathcal{C}$ is well-clipped in $\mathcal{A}$, and if there is a subgroup $W$ in $\mathrm{Aut}(\mathcal{A},V_{\Z})$ such that 
\begin{enumerate}
\item[(i)] for any $w\in W\setminus\{1\}$, 
$w(\mathcal{C}^{\circ})\cap \mathcal{C}^{\circ} = \emptyset$;
\item[(ii)] the group $W$ is preserved by conjugation under a subgroup 
$$\Gamma_{\mathcal{C}} < \Aut(\mathcal{C}^{\circ},\mathcal{A},V_{\Z});$$
\item[(iii)] the subgroup $\Gamma = W \rtimes  \Gamma_{\mathcal{C}}$
is an arithmetic subgroup of $\mathrm{Aut}(\mathcal{A})$;
\item[(iv)] the subgroup $\Gamma$ in (iii) contains the reflection group 
$\langle \sigma_i\mid i\in I\rangle$.
\end{enumerate}
\item {\it perfectly clipped in} $\mathcal{A}$ if $\mathcal{C}$ is well-clipped in $\mathcal{A}$,
and the orthogonal reflections induced by the hyperplanes delimiting $\mathcal{C}$ are such that
$$\langle \sigma_i \mid i\in I\rangle \rtimes \Aut(\mathcal{C}^{\circ},\mathcal{A},V_{\Z})$$
is an arithmetic subgroup of $\Aut(\mathcal{A})$.
\end{itemize}
\end{definition}

\begin{remark}\label{rem-perfneat}
Note that a perfectly clipped cone is always neatly clipped with $W= \langle\sigma_i\mid i\in I\rangle$ and $\Gamma_{\mathcal{C}} = \Aut(\mathcal{C}^{\circ},\mathcal{A},V_{\Z})$. Indeed, the disjoint interior property follows from Corollary \ref{cor-Cfunddom}, and the semidirect product structure in the definition of perfectly clipped cone is naturally induced by the fact that $\Aut(\mathcal{C}^{\circ},\mathcal{A},V_{\Z})$ preserves the set of hyperplanes $(H_i)_{i\in I}$.
\end{remark}

Let us take the list of Example \ref{ex-list}, and note that all of those well-clipped cones are in fact perfectly clipped.

\begin{example}\label{ex-listperfclipped} There are many instances of perfectly clipped cones.
\begin{enumerate}
\item[(1+7+8)] A self-dual homogeneous cone is always perfectly clipped. In particular, any simplicial cone is perfectly clipped, and any non-degenerate convex cone in $\R^2$ is perfectly clipped. For the same reason, the movable cones of abelian varieties are perfectly clipped.
\item[(2+3)] The nef cone of a smooth del Pezzo surface is perfectly clipped by Proposition \ref{prop-characterize-RPFD} below and by the cone theorem. The nef cone of a smooth projective K3 is also perfectly clipped: It follows from Proposition \ref{prop-characterize-RPFD} and the Torelli theorem \cite[Paragraph 6, Theorem 1]{PSSTorelli}.
\item[(4)] Let $S$ be a surface underlying a klt Calabi--Yau pair. The cone $(\overline{\mathrm{NE}}(S)_{K_S\le 0})^{\vee}$ (appearing in Example \ref{ex-list} and Lemma \ref{lem-surface-nicely}) is perfectly clipped, see Lemma \ref{lem-surfperfectly} below. This is a consequence of the cone conjecture in dimension $2$ proven by Totaro \cite{To10}.
\item[(5)] The movable cone of a smooth projective hyperkähler manifold is perfectly clipped by the work of Markman \cite[Theorem 6.18 (4+5), Lemma 6.23]{Markman11}, notably thanks to the Hodge theoretic Torelli theorem \cite[Theorem 1.3]{Markman11}. Please note the words of caution in \cite[Paragraph following Definition 1.1, Caution 6.19]{Markman11} and the counterexamples \cite{Deb,Na02,Markman10}, that convinced us to work on descending the movable (rather than the nef) cone conjecture and to take arithmetic subgroups in Definition \ref{def-neatlyclipped} rather than the whole group $\Aut(\mathcal{A},V_{\Z})$ (compare with \cite{OS01}).
\item[(6)] The movable cone of a projective primitive symplectic variety $X$ with terminal $\Q$-factorial singularities is perfectly clipped. It is clear for $b_2(X) \le 4$ by Item (1) above, and follows for $b_2(X) \ge 5$ from the work of Bakker-Lehn \cite[Theorem 8.2]{BL18} and Lehn-Mongardi-Pacienza \cite[Theorem 5.12]{LMP24}, using notably a Torelli theorem \cite[Theorem 4.9]{LMP24}.
\item[(9)] The nef cone of $\P^2$ blown up at $9$ general points and the movable cone of $\P^3$ blown up at 8 very general points are both perfectly clipped, by Proposition \ref{prop-characterize-RPFD} below and by \cite{Nagata60} and \cite{SX23} respectively. 
%%The explanation is the same in both cases: There is a hyperbolic quadratic form $q$ on the Néron-Severi space that dualizes the movable cone and the effective cone of divisors. There also is a Weyl group $W$ that is generated by orthogonal reflections with respect to $q$, and preserves both the movable and the effective cone. Now, picking an exceptional divisor $E$, we can look at the semi-direct product
%%$\Gamma = \langle \sigma_{w\cdot E} \mid w\in W\rangle \rtimes W,$
%%where $\sigma_{w\cdot E}$ is the orthogonal reflection fixing the hyperplane $(w\cdot E)^{\perp}$. The work of \cite{Nagata60} and \cite[Theorem 1.4]{SX23} show that this semi-direct product is of the right form to appear in Definition \ref{def-neatlyclipped}. What we now claim is that $\Gamma$ is an arithmetic subgroup of the orientation-preserving isometry group $O^+(q)$. One way to see it is to use \cite[Proposition 4.6]{Loo14} and check that there is a rational polyhedral fundamental domain for the action of $\Gamma$ on the positive component of the hyperbolic cone $\mathrm{Pos}(q)$, which can certainly be done using \cite{Nagata60} and \cite[Theorem 1.3]{SX23}, but should also be doable using root systems and Weyl groups only.
\item[(10)] A direct sum of perfectly clipped cones remains perfectly clipped. The proof is essentially the same as that of Lemma \ref{lem-directsum}. %%Therefore, the movable cone of a product of normal projective varieties
%%$$X=\prod_{1\le i\le k} X_i,$$
%%with $h^1(\mathcal{O}_{X_i}) = 0$ for $2\le i\le k$
%%is perfectly clipped if and only if the movable cone of each factors is, see \cite[Exercise III.12.6]{HarBook} and Lemma \ref{lem-mov-product}.
\end{enumerate}
\end{example}

Our motivation to introduce these notions of neatly and perfectly clipped cones is the following proposition, which is inspired by the work of Ash \cite[Chapter II]{AMRT}, Sterk \cite{Sterk85}, Markman \cite{Markman11}, Looijenga \cite[Example 4.8]{Loo14}, and Lehn--Mongardi--Pacienza \cite{LMP24}. One can view it as a characterization of those well-clipped cones which admit rational polyhedral fundamental domains under some group action.

\begin{proposition}\label{prop-characterize-RPFD}
Let $\mathcal{C}$ be a well-clipped cone in a self-dual homogeneous cone $\mathcal{A}$ in $V=V_{\Z}\otimes \R$. Then the following statements are equivalent
\begin{enumerate}
\item[(i)] $\mathcal{C}$ is perfectly clipped in $\mathcal{A}$;
\item[(ii)] $\mathcal{C}$ is neatly clipped in $\mathcal{A}$;
\item[(iii)] there is a rational polyhedral fundamental domain for the action of $\Aut(\mathcal{C}^{\circ},\mathcal{A},V_{\Z})$ on the cone $\mathcal{C}^+$.
\end{enumerate}
\end{proposition}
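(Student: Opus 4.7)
The plan is to prove the chain $(i) \Rightarrow (ii) \Rightarrow (iii) \Rightarrow (i)$. The implication $(i) \Rightarrow (ii)$ is immediate from Remark \ref{rem-perfneat}. The two remaining implications rest on two pillars: Corollary \ref{cor-Cfunddom}, which says that $\overline{\mathcal{C}}$ is a fundamental domain for the reflection group $R := \langle \sigma_i \mid i \in I \rangle$ acting on $\mathcal{A}^+$, and the classical equivalence — for any subgroup $\Lambda < \Aut(\mathcal{A}, V_{\Z})$ — between being arithmetic in $\Aut(\mathcal{A})$ and admitting a rational polyhedral fundamental domain on $\mathcal{A}^+$. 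The first direction of this equivalence follows from the existence of rational polyhedral reduction domains for $\Aut(\mathcal{A}, V_{\Z})$ in \cite{AMRT}, while the converse is in the spirit of \cite[Proposition 4.1]{Loo14}.

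For $(iii) \Rightarrow (i)$, let $\Pi \subset \overline{\mathcal{C}}$ be a rational polyhedral fundamental domain for $\Aut(\mathcal{C}^{\circ}, \mathcal{A}, V_{\Z})$ on $\mathcal{C}^+$. Since this group permutes the boundary hyperplanes $(H_i)_{i \in I}$ of $\mathcal{C}$ (see Remark \ref{rem-minimal}), it normalizes $R$, so $\tilde{\Gamma} := R \rtimes \Aut(\mathcal{C}^{\circ}, \mathcal{A}, V_{\Z})$ is a well-defined subgroup of $\Aut(\mathcal{A}, V_{\Z})$. The covering property $\tilde{\Gamma} \Pi \supset \mathcal{A}^+$ follows by combining $R \cdot \overline{\mathcal{C}} = \mathcal{A}^+$ with $\Aut(\mathcal{C}^{\circ}, \mathcal{A}, V_{\Z}) \cdot \Pi \supset \mathcal{C}^+$. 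For disjoint interiors, I would write any element $\tilde{\gamma} \in \tilde{\Gamma}$ as $\sigma \gamma$ with $\sigma \in R$, $\gamma \in \Aut(\mathcal{C}^{\circ}, \mathcal{A}, V_{\Z})$: if $\tilde{\gamma} \Pi^\circ \cap \Pi^\circ \neq \emptyset$, then $\sigma(\mathcal{C}^\circ) \cap \mathcal{C}^\circ \neq \emptyset$, so $\sigma = 1$ by Corollary \ref{cor-Cfunddom}, and then $\gamma = 1$ by the fundamental-domain property of $\Pi$. Thus $\Pi$ is a rational polyhedral fundamental domain for $\tilde{\Gamma}$ on $\mathcal{A}^+$, and the classical equivalence forces $\tilde{\Gamma}$ to be arithmetic, exactly as (i) demands.

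For $(ii) \Rightarrow (iii)$, start with $\Gamma = W \rtimes \Gamma_{\mathcal{C}}$ arithmetic, as in Definition \ref{def-neatlyclipped}. The classical equivalence supplies a rational polyhedral fundamental domain $\Pi_0$ for $\Gamma$ on $\mathcal{A}^+$. I next verify that $\overline{\mathcal{C}}$ is a fundamental domain for $W$ on $\mathcal{A}^+$: disjoint interiors are built into Definition \ref{def-neatlyclipped}(i), and the covering property follows by writing each $\sigma \in R \subset \Gamma$ as $\sigma = w\gamma$ with $w \in W$ and $\gamma \in \Gamma_{\mathcal{C}}$, which gives $w \overline{\mathcal{C}} = \sigma \overline{\mathcal{C}}$ and hence $W \overline{\mathcal{C}} \supset R \overline{\mathcal{C}} = \mathcal{A}^+$ by Corollary \ref{cor-Cfunddom}. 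Arranging $\Pi_0 \subset \overline{\mathcal{C}}$, I obtain a rational polyhedral fundamental domain $\Pi_{\mathcal{C}}$ for $\Gamma_{\mathcal{C}}$ on $\mathcal{C}^+$. Since the $\Aut(\mathcal{C}^{\circ}, \mathcal{A}, V_{\Z})$-translates of $\Pi_{\mathcal{C}}$ still cover $\mathcal{C}^+$, \cite[Proposition 4.1]{Loo14} upgrades $\Pi_{\mathcal{C}}$ to a rational polyhedral fundamental domain for the full group $\Aut(\mathcal{C}^{\circ}, \mathcal{A}, V_{\Z})$ on $\mathcal{C}^+$, which is (iii).

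The main technical hurdle lies in correctly combining the two layers of fundamental domains (one for the reflection action on $\mathcal{A}^+$, one for the residual $\Gamma_{\mathcal{C}}$- or $\Aut(\mathcal{C}^{\circ}, \mathcal{A}, V_{\Z})$-action on $\mathcal{C}^+$) while preserving rational polyhedrality. The chamber decomposition of $\mathcal{A}^+$ provided by Corollary \ref{cor-Cfunddom} and Looijenga's polyhedralization result make this bookkeeping mostly routine, so the main non-trivial ingredient to invoke rigorously is the classical equivalence between arithmeticity and existence of a rational polyhedral fundamental domain.
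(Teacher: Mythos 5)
Your proposal is correct and follows essentially the same route as the paper: (i)$\Rightarrow$(ii) via Remark \ref{rem-perfneat}, (iii)$\Rightarrow$(i) by showing the given domain $\Pi$ is also a rational polyhedral fundamental domain for $\langle \sigma_i\mid i\in I\rangle \rtimes \Aut(\mathcal{C}^{\circ},\mathcal{A},V_{\Z})$ on $\mathcal{A}^+$ via Corollary \ref{cor-Cfunddom} and then invoking the arithmeticity/fundamental-domain equivalence from \cite{AMRT} and \cite[Propositions 4.2, 4.6]{Loo14}. The only divergence is in (ii)$\Rightarrow$(iii), where you re-derive rather than cite Lemma \ref{lem-AMRTstyle}; the step you dismiss as bookkeeping --- ``arranging $\Pi_0\subset\overline{\mathcal{C}}$'' and checking that it then serves as a fundamental domain for $\Gamma_{\mathcal{C}}$ on $\mathcal{C}^+$ --- is precisely the nontrivial content of that lemma (the Dirichlet--Voronoi construction centered at a suitably generic point of $\mathcal{C}^{\circ}\cap V_{\Q}$ and the resulting dichotomy between translates contained in $\mathcal{C}^+$ and translates missing $\mathcal{C}^{\circ}$), so you should either cite it or supply that construction.
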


A key result involved in the proof of Proposition \ref{prop-characterize-RPFD} is the following theorem, originally attributed to Vinberg.

\begin{theorem}\label{thm-amrt}
Let $\mathcal{A}$ be a self-dual homogeneous cone in $V=V_{\Z}\otimes \R$. For any arithmetic subgroup $\Gamma$ of $\Aut(\mathcal{A})$, the action of $\Gamma$ on $\mathcal{A}^+$ admits a rational polyhedral fundamental domain.
\end{theorem}

\begin{proof}
This follows from the work of Vinberg on reduction theory and Siegel sets; see \cite[Chapter II, Theorem 4.1]{AMRT}, \cite[Propositions 4.2, 4.6]{Loo14}. Note that the compatibility of $\mathcal{A}$ with the rational structure $V_{\Q}$ (\textit{c.f.} Definition \ref{def-compatiblerat}) is used here to apply \cite[Proposition 4.2]{Loo14}.
\end{proof}

In order to prove Proposition \ref{prop-characterize-RPFD}, we also need a technical lemma.

\begin{lemma}\label{lem-AMRTstyle}
Let $\mathcal{C}$ be a well-clipped cone in a self-dual homogeneous cone $\mathcal{A}$ in $V=V_{\Z}\otimes \R$. Then for any arithmetic group $\Gamma<\Aut(\mathcal{A},V_{\Z})$ that contains the group spanned by the orthogonal reflections $\langle \sigma_i\mid i\in I\rangle$, there exists a rational polyhedral cone $\Pi\subset\mathcal{C}^+$ such that
\begin{enumerate}
\item[(1)] We have $$\mathcal{C}^+ =
\bigcup_{\gamma\in \mathfrak{S}}
\gamma(\Pi),$$
where $\mathfrak{S}$ denotes the set $\{\gamma\in \Gamma\mid \gamma(\mathcal{C}^{\circ})\cap \mathcal{C}^{\circ}\neq\emptyset\}$.
\item[(2)] For $\gamma\neq \gamma'\in\mathfrak{S}$, it holds that
$\gamma(\Pi^{\circ})\cap \gamma'(\Pi^{\circ})=\emptyset.$
\end{enumerate}
If moreover $\mathcal{C}$ is neatly clipped in $\mathcal{A}$, then
\begin{enumerate}
\item[(3)] There is a rational polyhedral fundamental domain for the action of the group $\Aut(\mathcal{C}^{\circ},\mathcal{A},V_{\Z})$ on $\mathcal{C}^+$. Additionally, any group $\Gamma_{\mathcal{C}}$ that may appear in Definition \ref{def-neatlyclipped} for the cone $\mathcal{C}$ is of finite index in $\Aut(\mathcal{C}^{\circ},\mathcal{A},V_{\Z})$.
\end{enumerate}
\end{lemma}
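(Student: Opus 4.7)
The plan is to apply the classical theorem of Ash on rational polyhedral fundamental domains for arithmetic group actions on self-dual homogeneous cones (see \cite[Chapter~II]{AMRT}) to obtain a rational polyhedral fundamental domain $\Pi_0 \subset \mathcal{A}^+$ for the action of $\Gamma$, and then to show that $\Pi_0$ may be chosen to sit inside $\overline{\mathcal{C}}$ by a reflection-confinement argument. This $\Pi_0$ will then serve as the desired $\Pi$.

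The key confinement step uses that $W_0 := \langle \sigma_i \mid i\in I\rangle \subset \Gamma$ by hypothesis. For each $i \in I$, the disjoint interior property of the fundamental domain forces $\sigma_i(\Pi_0^{\circ}) \cap \Pi_0^{\circ} = \emptyset$, which in turn forces $H_i \cap \Pi_0^{\circ} = \emptyset$: indeed, any $p \in H_i \cap \Pi_0^{\circ}$ would admit a small open neighborhood $U \subset \Pi_0^{\circ}$, and $U \cap \sigma_i(U)$ would contain the fixed point $p$, a contradiction. Hence $\Pi_0^{\circ}$ is contained in a single $W_0$-chamber on $\mathcal{A}^+$, which by Corollary \ref{cor-Cfunddom} is of the form $w(\mathcal{C}^{\circ})$ for some $w \in W_0$; after replacing $\Pi_0$ by $w^{-1}(\Pi_0)$, we may assume $\Pi := \Pi_0 \subset \overline{\mathcal{C}}$.

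Properties (1) and (2) then come out essentially for free. Property (2) is a restriction of the fundamental domain property of $\Pi_0$ for $\Gamma$ on $\mathcal{A}^+$ to the subset $\mathfrak{S} \subset \Gamma$. For property (1), any $x \in \mathcal{C}^{\circ}$ writes $x = \gamma(p)$ with $p \in \Pi_0^{\circ} \subset \overline{\mathcal{C}}^{\circ} = \mathcal{C}^{\circ}$, whence $x \in \gamma(\mathcal{C}^{\circ}) \cap \mathcal{C}^{\circ}$ and $\gamma \in \mathfrak{S}$. Boundary points $x \in \mathcal{C}^+ \setminus \mathcal{C}^{\circ}$ are then handled by approximation $x_n \to x$ with $x_n \in \mathcal{C}^{\circ}$, combined with local finiteness of the tiling $\bigcup_{\gamma \in \Gamma} \gamma(\Pi)$ on $\mathcal{A}^+$, allowing extraction of a constant $\gamma \in \mathfrak{S}$ such that $x = \gamma(p)$ with $p \in \Pi$.

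For property (3) in the neatly clipped case, I apply the first part to the group $\Gamma := W \rtimes \Gamma_{\mathcal{C}}$, which is arithmetic by Definition \ref{def-neatlyclipped}(iii) and contains $W_0$ by (iv). Writing $\gamma = w \cdot \gamma_{\mathcal{C}}$ with $w \in W$, the fact that $\gamma_{\mathcal{C}}$ preserves $\mathcal{C}^{\circ}$ yields $\gamma(\mathcal{C}^{\circ}) = w(\mathcal{C}^{\circ})$, and Definition \ref{def-neatlyclipped}(i) gives $\mathfrak{S} = \Gamma_{\mathcal{C}}$. Thus $\Pi$ is already a rational polyhedral fundamental domain for $\Gamma_{\mathcal{C}}$ on $\mathcal{C}^+$. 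Since $\Gamma_{\mathcal{C}} < \Aut(\mathcal{C}^{\circ}, \mathcal{A}, V_{\Z})$, I then invoke \cite[Proposition~4.1]{Loo14} to upgrade this into a rational polyhedral fundamental domain for the larger group on $\mathcal{C}^+$ and to deduce that $\Gamma_{\mathcal{C}}$ has finite index in $\Aut(\mathcal{C}^{\circ}, \mathcal{A}, V_{\Z})$. The main technical obstacle I anticipate is formulating the AMRT input cleanly in our generality of a self-dual homogeneous cone together with an arithmetic supergroup of the reflection group $W_0$; the rest is convex-geometric bookkeeping.
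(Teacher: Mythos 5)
Your overall strategy -- take an AMRT rational polyhedral fundamental domain for $\Gamma$ on $\mathcal{A}^+$ and use the hypothesis $\langle\sigma_i\rangle<\Gamma$ to force it into $\overline{\mathcal{C}}$ -- is the same as the paper's, and your confinement argument (a reflection in $\Gamma$ cannot fix an interior point of a fundamental domain, so $\Pi^{\circ}$ avoids the reflection hyperplanes and sits in a single $W_0$-chamber, which by Corollary \ref{cor-Cfunddom} is a $W_0$-translate of $\mathcal{C}^{\circ}$) is a legitimate substitute for the paper's mechanism. The paper instead builds $\Pi$ explicitly as a Dirichlet--Voronoi domain centered at a rational point $a\in\mathcal{C}^{\circ}$ with trivial $\Gamma$-stabilizer whose orbit avoids the walls, and gets $\Pi\subset\overline{\mathcal{C}}$ from the one-line computation $q(x,\sigma_i(a))-q(x,a)=-2q(a,e_i)q(x,e_i)/q(e_i,e_i)\ge 0$. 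Two small repairs to your confinement step: you must rule out $\Pi_0^{\circ}$ meeting \emph{all} reflection hyperplanes of $W_0$ (the conjugates $w(H_i)$), not only the walls $H_i$ of $\mathcal{C}$ -- the complement of $\bigcup_{i\in I}H_i$ alone has components that are unions of several chambers -- and in the covering step a point $x\in\mathcal{C}^{\circ}$ only gives $p\in\Pi$, not $p\in\Pi^{\circ}$, though $\gamma\in\mathfrak{S}$ still follows since $\mathcal{C}^{\circ}$ is open and $\overline{\mathcal{C}}=\overline{\mathcal{C}^{\circ}}$.

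The genuine gap is in property (1): you prove only the inclusion $\mathcal{C}^+\subseteq\bigcup_{\gamma\in\mathfrak{S}}\gamma(\Pi)$ and never address the reverse inclusion, i.e.\ that $\gamma(\Pi)\subset\mathcal{C}^+$ for every $\gamma\in\mathfrak{S}$. This is not automatic: \emph{a priori} a translate $\gamma(\Pi)$ could straddle the boundary of $\mathcal{C}$, or meet $\mathcal{C}^{\circ}$ while sticking out of it, and then the right-hand side of (1) would not be contained in $\mathcal{C}^+$. This is exactly what the paper's ``dichotomic behavior'' establishes: for every $\gamma\in\Gamma$, either $\gamma(\Pi)\subset\mathcal{C}^+$ or $\gamma(\Pi)\cap\mathcal{C}^{\circ}=\emptyset$, obtained there from the position of $\gamma(a)$ relative to the walls. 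Your chamber argument can in fact be upgraded to give this dichotomy (each $\gamma(\Pi)$ is itself a fundamental domain for $\Gamma\supset W_0$, so $\gamma(\Pi^{\circ})$ lies in a single $W_0$-chamber $w(\mathcal{C}^{\circ})$, and $w=1$ exactly when $\gamma(\Pi)$ meets $\mathcal{C}^{\circ}$), but as written the step is missing, and with it the equality in (1). Note that for Point (3) the omission is harmless, since there $\mathfrak{S}=\Gamma_{\mathcal{C}}$ preserves $\mathcal{C}^{\circ}$ and hence $\mathcal{C}^+$; but (1) and (2) are used elsewhere (Proposition \ref{prop-descent}) in the merely well-clipped setting, so the dichotomy must be supplied.
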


\begin{proof}[Proof of Lemma \ref{lem-AMRTstyle}]
We start with the self-dual homogeneous cone $\mathcal{A}$. Let $\Gamma$ be an arithmetic subgroup of $\Aut(\mathcal{A})$ that contains all the orthogonal reflections $(\sigma_i)_{i\in I}$ with respect to the hyperplanes $(H_i)_{i\in I}$ cutting out $\mathcal{C}$. By Theorem \ref{thm-amrt}, there is a rational polyhedral fundamental domain for the action of $\Gamma$ on the cone $\mathcal{A}^+$. By \cite[Application 4.14]{Loo14}, we can perform the following construction {\it à la} Dirichlet--Voronoi: If we choose a point $a\in \mathcal{A}\cap V_{\Q}$ with trivial stabilizer in $\Gamma$, the cone
$$\Pi := \{x \in\mathcal{A}^+\mid \forall \gamma\in\Gamma,\; q(x,\gamma(a)) \ge q(x,a)\}$$
is a rational polyhedral fundamental domain for the action of $\Gamma$ on $\mathcal{A}^+$. From here on, we fix $a$ to be a point in $\mathcal{C}^{\circ}\cap V_{\Q}$ that has trivial stabilizer in $\Gamma$, and whose $\Gamma$-orbit avoids the hyperplanes $(H_i)_{i\in I}$ used to cut out $\mathcal{C}$. (Its existence essentially follows from \cite[Theorem 3.8]{Loo14}, see \cite[Page 8, Proof of Proposition 2.3]{GLW22} and \cite[Page 13, Proof of Lemma 3.5, Last Paragraph]{LiZhao} for more details.)

\medskip

We claim that $\Pi\subset \mathcal{C}^+$. 
Fix $x\in\Pi$. We have
$$q(x,\sigma_i(a)) - q(x,a) = \frac{-2q(a,e_i)q(x,e_i)}{q(e_i,e_i)} \ge 0,$$
which implies, since $a\in\mathcal{C}^{\circ}$, that $q(x,e_i) \ge 0$. Therefore, we have that $x\in \overline{\mathcal{C}}$. This shows that $\Pi\subset \overline{\mathcal{C}}$, and since $\Pi$ is rational polyhedral, it proves our claim.

The same argument shows, more generally, that for any $\gamma\in \Gamma$ satisfying $\gamma(a)\in\mathcal{C}^{\circ}$, we have 
$\gamma(\Pi)\subset \mathcal{C}^+.$ We also note that, for any $\gamma\in \Gamma$ such that $\gamma(a)\notin\mathcal{C}^{\circ}$, by our assumptions on the point $a$, there exists $i\in I$ such that
$q(\gamma(a),e_i) < 0$, and thus $\gamma(\Pi)$ is contained in the closed halfspace $\overline{H_{i,-}}$. In particular, the intersection $\gamma(\Pi)\cap \mathcal{C}^{\circ}$ is empty. This is an interesting dichotomic behavior between translates of $\Pi$ contained in $\mathcal{C}^+$, and translates of $\Pi$ disjoint from $\mathcal{C}^{\circ}$.

To conclude this proof, we need to understand whether/how translates of $\Pi$ cover the cone $\mathcal{C}^+$.
Let $x\in \mathcal{C}^{\circ}$. Since $\Pi$ is a fundamental domain for the action of $\Gamma$ on $\mathcal{A}^+$, we can find an element $\gamma\in\Gamma$ such that $\gamma(x)\in \Pi$. This yields that
$\gamma^{-1}(\Pi)\cap \mathcal{C}^{\circ}\neq \emptyset$, hence by the dichotomic behavior explained above, we must have $\gamma^{-1}(\Pi)\subset \mathcal{C}^+$, and $\gamma^{-1}(a)\in\mathcal{C}^{\circ}$. This shows that
$$\mathcal{C}^+ =
\bigcup_{\gamma\in \mathfrak{S}}
\gamma(\Pi),$$
where $\mathfrak{S}$ denotes the set $\{\gamma\in\Gamma\mid \gamma(a)\in\mathcal{C}^{\circ}\}$. This shows Point 1. Point 2 is clear by definition of $\Pi$, since $a$ has trivial $\Gamma$-stabilizer.

\medskip

We finally prove Point 3. Assume that $\mathcal{C}$ is neatly clipped, and take $\Gamma$ to be an arithmetic subgroup that also writes as in Definition \ref{def-neatlyclipped}
$$\Gamma = W \rtimes \Gamma_{\mathcal{C}},$$
for some subgroup $\Gamma_{\mathcal{C}} < \Aut(\mathcal{C}^{\circ},\mathcal{A},V_{\Z})$, and for a group $W$ such that non-trivial $W$-translates of $\mathcal{C}^{\circ}$ avoid $\mathcal{C}^{\circ}$.
Note that for $\gamma \in\Gamma$, we have 
$\gamma(\mathcal{C}^{\circ})\cap \mathcal{C}^{\circ}\neq\emptyset$ if and only if $\gamma \in \Gamma_{\mathcal{C}}.$
So $\mathfrak{S} = \Gamma_{\mathcal{C}}$ is a subgroup of $\Aut(\mathcal{A},V_{\Z})$, and $\Pi$ transparently is a rational polyhedral fundamental domain for its action. Since $\Gamma_{\mathcal{C}}$ is contained in $\Aut(\mathcal{C}^{\circ},\mathcal{A},V_{\Z})$, there also is a rational polyhedral fundamental domain for the action of the larger group $\Aut(\mathcal{C}^{\circ},\mathcal{A},V_{\Z})$ on $\mathcal{C}^+$, see for instance \cite[Proposition 4.1, Application 4.14]{Loo14}. The fact that $\Gamma_{\mathcal{C}}$ has finite index in $\Aut(\mathcal{C}^{\circ},\mathcal{A},V_{\Z})$ is then a consequence of the general fact \cite[Proposition 4.6]{Loo14}.
\end{proof}

We can now prove Proposition \ref{prop-characterize-RPFD}.

\begin{proof}[Proof of Proposition \ref{prop-characterize-RPFD}]
By Remark \ref{rem-perfneat}, Item (i) implies Item (ii). By Lemma \ref{lem-AMRTstyle}, Item (ii) implies Item (iii). 

We now assume Item (iii) and prove Item (i).
By Item (iii), there is a rational polyhedral fundamental domain $\Pi$ for the action of $\Aut(\mathcal{C}^{\circ},\mathcal{A},V_{\Z})$ on $\mathcal{C}^+$, thus by Corollary \ref{cor-Cfunddom}, $\Pi$ also is a rational polyhedral fundamental domain for the action of the group
$$\Gamma := \langle \sigma_i \mid i\in I\rangle \rtimes \Aut(\mathcal{C}^{\circ},\mathcal{A},V_{\Z})$$
on the cone $\mathcal{A}^+$. By Theorem \ref{thm-amrt}, there exists a rational polyhedral fundamental domain for the action of the group $\Aut(\mathcal{A},V_{\Z})$ on the cone $\mathcal{A}^+$ as well. By \cite[Proposition 4.6]{Loo14}, this shows that $\Gamma$ is commensurable to $\Aut(\mathcal{A},V_{\Z})$, i.e., an arithmetic subgroup of $\Aut(\mathcal{A})$.
\end{proof}

We conclude this section with the following result, promised in Example \ref{ex-listperfclipped} and furthering Lemma \ref{lem-surface-nicely}.

\begin{lemma}\label{lem-surfperfectly}
Let $S$ be a smooth projective surface underlying a klt Calabi--Yau pair $(S,\Delta)$ and denote by $q_S$ its intersection form. The cone $\mathcal{C}_S := (\overline{\mathrm{NE}}(S)_{K_S\le 0})^{\vee}$ is perfectly clipped in the hyperbolic cone $\mathcal{H}(q_S)$ associated to $q_S$. Moreover, the group $\Aut^*(S,\Delta)$ is of finite index in the group of linear automorphisms of $N^1(S)$ preserving $\mathcal{C}_S$, $\mathcal{H}(q_S)$, and the lattice of Cartier divisors on $S$.
\end{lemma}

\begin{proof} We consider the contragradient representation of the group $\Aut^*(S,\Delta)$ on the dual space $N^1(S)^{{\vee}}\simeq N_1(S)$.
It follows from the cone conjecture for the pair $(S,\Delta)$ (\textit{c.f.} \cite[Theorem 4.1, Lemma 4.2]{To10}) and from \cite[Proposition 4.1 (i) $\Leftrightarrow$ (i*)]{Loo14} that the action of the group $\Aut^*(S,\Delta)$ on the cone $\overline{\mathrm{NE}}(S)^+$ admits a rational polyhedral fundamental domain $\Pi$. 

The cone
$\Pi':= \Pi_{K_S\le 0}$ is also rational polyhedral, and forms a fundamental domain for the action of $\Aut^*(S,\Delta)$ on the subcone $(\overline{\mathrm{NE}}(S)_{K_S\le 0})^+$. By \cite[Proposition 4.1 (i) $\Leftrightarrow$ (i*)]{Loo14} again, we may dualize and obtain that the action of $\Aut^*(S,\Delta)$ on $\mathcal{C}_S^+$ admits a rational polyhedral fundamental domain. \textit{A fortiori}, the action of the larger group $\Aut(\mathcal{C}_S^{\circ},\mathcal{H}(q_S),N^1(S)_{\Z})$ on that cone admits a rational polyhedral fundamental domain too; see for instance \cite[Proposition 4.1, Application 4.14]{Loo14}. 
The implication $(iii)\Rightarrow(i)$ of Proposition \ref{prop-characterize-RPFD} shows that the cone $\mathcal{C}_S$ is perfectly clipped. 
The fact that $\Aut^*(S,\Delta)$ has finite index in $\Aut(\mathcal{C}_S^{\circ},\mathcal{H}(q_S),N^1(S)_{\Z})$ follows from \cite[Proposition 4.6]{Loo14}.
\end{proof}

Motivated by this geometric situation, we prove the following result.

\begin{lemma}\label{lem-convexfurtherslice}
Let $V = V_{\Z}\otimes \R$ be a finite dimensional vector space. Let $\mathcal{C}$ be a cone in $V$ and assume that a subgroup $\Gamma$ of $\Aut(\mathcal{C}^{\circ},V_{\Z})$ acts on the cone $\mathcal{C}^+$ with a rational polyhedral fundamental domain. Then, for any finite $\Gamma$-invariant collection of rational halfspaces $(W_i)_{1\le i\le r}$ in $V$, the action of $\Gamma$ on the cone
$$\left(\mathcal{C}\cap \bigcap_{i=1}^r W_i\right)^+$$
admits a rational polyhedral fundamental domain.
\end{lemma}

\begin{proof}
Since $\Gamma$ preserves the finite collection of the $(W_i)_{1\le i\le r}$, it also preserves the (possibly degenerate) convex cone
$$W= \bigcap_{i=1}^r W_i.$$
Let $\Pi$ be a rational polyhedral fundamental domain for the action of $\Gamma$ on $\mathcal{C^+}$. We claim that the cone
$\Pi_W:= \Pi\cap W$ is a rational polyhedral fundamental domain for the action of $\Gamma$ on ${\mathcal{C}_W}^+$, where $\mathcal{C}_W:=\mathcal{C}\cap W$.
The following two facts are immediate: $\Pi_W$ is contained in $\mathcal{C}_W$, and any two distinct $\Gamma$-translates of $\Pi_W$ have disjoint interiors.
Since $\Pi_W$ is the intersection of a rational polyhedral cone with a finite collection of rational halfspaces, it is a rational polyhedral cone. Finally, let $x\in{\mathcal{C}_W}^+$. Since $x\in\mathcal{C}$, there is an element $\gamma\in\Gamma$ such that $\gamma(x)\in \Pi$. Since $x\in W$ and $\gamma(W)=W$, we get that $\gamma(x)\in\Pi_W$. Hence, the rational polyhedral cone $\Pi_W$ is indeed a fundamental domain for the action of $\Gamma$ on ${\mathcal{C}_W}^+$.
\end{proof}

\section{Descent under finite quotients}

\subsection{Descent of self-dual homogeneous cones}

This result is inspired by the work of Monti--Quedo \cite{MQ}. It is surprisingly hard to write down, for a self-dual homogeneous cone $\mathcal{A}$ and a finite subgroup $G$ of $\Aut(\mathcal{A})$, an explicit group that acts transitively on the cone $\mathcal{A}^G$; That is without the powerful machinery of formally real Jordan algebras and of Theorem \ref{thm-koechervinberg}. Our proofs in this subsection highlight this fact. We make the choice to completely forego the use of the classification presented in Theorem \ref{thm-classification}.

\begin{lemma}\label{lem-quotient-selfdualhom}
Let $\mathcal{A}$ be a self-dual homogeneous cone in $V$. Let $G$ be a subgroup of $\Aut(\mathcal{A})$ acting with finite orbits. 
The following results hold:
\begin{enumerate}
\item The invariant cone $\mathcal{A}^G \subset V^{G}$ is self-dual homogeneous.
\item If $\mathcal{A}$ is compatible with a rational structure $V_{\Q}$ on $V$, then $\mathcal{A}^G$ is compatible with the rational structure $V_{\Q}^G$.
\item If $\mathcal{A}$ is of hyperbolic type, then $\mathcal{A}^G$ is a halfline, the direct sum of two halflines, or a cone of hyperbolic type.
\item Suppose that $\mathcal{A} = \mathcal{H}^{\oplus n}$, for some $n\ge 1$ and $\mathcal{H}$ a hyperbolic cone. Then $G$ acts by permutation on the set of summands. Suppose further that this action on the set of summands is transitive and denote by $G_0 \triangleleft G$ the subgroup preserving each summand. Then, the cone $\mathcal{A}^G$ viewed inside $\mathcal{A}$ is isomorphic to the cone $\mathcal{H}^{G_0}$ embedded diagonally into $\mathcal{H}^{\oplus n}$.
\end{enumerate}
\end{lemma}

\begin{proof}
We first prove (1). Since $\mathcal{A}$ is invariant under sum and preserved by $G$, it contains the finite sum of all elements of a given $G$-orbit. Thus, the invariant cone $\mathcal{A}^G$ is clearly non-empty. By Lemma \ref{lem-quadratic}, we take an $\Aut(\mathcal{A},V_{\Z})$-invariant quadratic form $\mathrm{tr}$ on $V$ with respect to which $\mathcal{A}$ is self-dual. Since it is $G$-invariant, its restriction to $V^G$ remains positive definite and the cone $\mathcal{A}^G$ remains self-dual with respect to it.

Let us take $e\in\mathcal{A}^G$. By Theorem \ref{thm-koechervinberg}, there is a formally real Jordan algebra $(V,e,\circ)$ associated to the pointed cone $(\mathcal{A},e)$. At the level of arrows, the equivalence of categories given by Theorem \ref{thm-koechervinberg} identifies the automorphism group of $(V,e,\circ)$ (as a Jordan algebra) with the automorphism group $\Aut(\mathcal{A},e)$ of our pointed cone. Since $(\mathcal{A},e)$ is $G$-invariant, so is the Jordan algebra $(V,e,\circ)$, that is:
$$g(x\circ y) = g(x)\circ g(y)\quad\forall\, g\in G,\,x,y\in V.$$
Restricting the operation, we obtain the invariant formally real Jordan algebra $(V^G,e,\circ)$ contained in the initial one. By Theorem \ref{thm-koechervinberg} and Remark \ref{rem-koechervin}, we get the self-dual homogeneous cone 
$$\mathcal{B} := \{b\circ b\mid b\in V^G,\, b\mbox{ invertible}\} \subset V^G\cap \mathcal{A} = \mathcal{A}^G.$$
Both $\mathcal{B}$ and $\mathcal{A}^G$ are self-dual with respect to the same quadratic form $\mathrm{tr}|_{V^G}$, thus the equality $\mathcal{A}^G = \mathcal{B}$ holds, proving that $\mathcal{A}^G$ is self-dual and homogeneous.

Note that $\mathcal{A}$ is compatible with the rational structure $V_{\Q}$ if and only if the corresponding Jordan operation $\circ$ (for a choice of $e\in\mathcal{A}\cap V_{\Q}$) restricts to an operation on $V_{\Q}$. This proves Item (2).

Now we prove Item (3). Suppose that $\mathcal{A}$ is of hyperbolic type. There is a hyperbolic quadratic form $q$ on $V$ such that 
$\partial\mathcal{A} \subset \{v\in V\mid q(v) = 0\}$. The boundary of $\mathcal{A}^G$ is also contained in the zero locus of the restriction $q|_{V^G}$. 
If $\dim V^G \le 2$, the cone $\mathcal{A}^G$ is a direct sum of one or two halflines, as wished. 

Assume now that $\dim V^G \ge 3$. Since $e\in V^G$ and $q(e) > 0$, the restricted quadratic form $q|_{V^G}$ is of index $(1,a,b)$ for some integers $a,b\ge 0$ with $a+b= \dim V^G -1$. If $b=0$, then $q|_{V^G}$ is hyperbolic and we are done. Otherwise $b\ge 1$, i.e., there exists $v\in V^G$ be such that $q(v,V^G) = \{0\}$. Since the form $q$ is $G$-invariant, for any $w\in V$, we have 
$$|G\cdot w|\cdot q(v,w)= q\left(v,\sum_{g\in G/{\rm Stab}(w)} g(w)\right) \in q(v,V^G) = \{0\},$$
yielding $q(v,V)=\{0\}$. But the form $q$ is hyperbolic, thus non-degenerate on $V$, a contradiction.

We finally prove Item (4). Since the decomposition of $\mathcal{A}$ into irreducible summands is unique, the group $G$ indeed acts by permutation on the set of the summands. Let $W$ denote the span of the first summand: We have $V = W^{\oplus n}$. Since $G$ acts transitively on the set of summands, if an element of $W^{\oplus n}$ is $G$-invariant, then it belongs to the diagonal $\Delta_W\simeq W$, and its first coordinate in $W$ is further invariant under $G_0$. The converse is easy to check, allowing us to identify
$V^G \subset V$ with the linear subspace $W^{G_0}\hookrightarrow W^{\oplus n},$ embedded diagonally. Intersecting with $\mathcal{A}$ concludes the proof.
\end{proof}

\begin{remark}
The proof of Lemma \ref{lem-descent} involves fixing a $G$-invariant element $e$ in $\mathcal{A}$. This is  essentially the same choice as the one made by Monti--Quedo in \cite[Page 13, Paragraph below Lemma 6.6]{MQ} of a $G$-invariant polarization on the abelian variety with a $G$-action.
\end{remark}

\begin{lemma}\label{lem-transitivecentralizer}
Let $\mathcal{A}$ be a self-dual homogeneous cone in $V$. Let $G$ be a subgroup of $\Aut(\mathcal{A})$ acting with finite orbits. Then, the centralizer $C_{\Aut(\mathcal{A})}(G)$ acts transitively on the cone $\mathcal{A}^G$.
\end{lemma}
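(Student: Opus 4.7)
The plan is to invoke the Koecher--Vinberg equivalence (Theorem \ref{thm-koechervinberg}) together with the quadratic representation of a Jordan algebra to produce elements of $C_{\Aut(\mathcal{A})}(G)$ explicitly, in the same spirit as the previous lemma.

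First I would pick a $G$-invariant base point $e \in \mathcal{A}^G$; such an $e$ exists by averaging any element of $\mathcal{A}$ over $G$ and using convexity of $\mathcal{A}$. Applying Theorem \ref{thm-koechervinberg} to the pointed cone $(\mathcal{A}, e)$ equips $V$ with a formally real Jordan algebra structure $(V, e, \circ)$ whose associated cone is $\mathcal{A}$. Since every $g \in G$ fixes $e$ and preserves $\mathcal{A}$, the equivalence of categories forces $g$ to act by Jordan algebra automorphisms, i.e., $g(x \circ y) = g(x) \circ g(y)$ for all $x, y \in V$.

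Next, I would exploit the quadratic representation $P(x) := 2 L(x)^2 - L(x \circ x)$, where $L(x)(y) := x \circ y$. The standard Jordan-algebraic facts I would use (see \cite{FK94}) are: (a) $P(x) \in \Aut(\mathcal{A})$ for every $x \in \mathcal{A}$; (b) every $b \in \mathcal{A}$ admits a unique square root $\sqrt{b} \in \mathcal{A}$ in $(V, e, \circ)$; and (c) a direct computation yields $P(x)(e) = x \circ x$, so in particular $P(\sqrt{b})(e) = b$. Moreover, the defining formula for $P$ gives at once the conjugation identity $g \circ P(x) \circ g^{-1} = P(g(x))$ for every Jordan algebra automorphism $g$. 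If $b$ lies in $\mathcal{A}^G$, then uniqueness of the square root and $G$-equivariance of $\circ$ force $\sqrt{b}$ to be $G$-invariant, hence $P(\sqrt{b})$ commutes with every $g \in G$, i.e., $P(\sqrt{b}) \in C_{\Aut(\mathcal{A})}(G)$.

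To conclude, for any $a, b \in \mathcal{A}^G$ the element $P(\sqrt{b}) \circ P(\sqrt{a})^{-1}$ lies in $C_{\Aut(\mathcal{A})}(G)$ and sends $a$ to $b$, which gives the desired transitivity. The main subtle point — and the reason the Jordan-algebraic detour is genuinely necessary here, as flagged in the introduction — is the existence and $G$-equivariance of the square root: there seems to be no transparent convex-geometric way to write down an element of $\Aut(\mathcal{A})$ that both commutes with $G$ and carries $e$ to a prescribed point of $\mathcal{A}^G$.
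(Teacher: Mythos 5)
Your proposal is correct and follows essentially the same route as the paper: both pass through the Koecher--Vinberg equivalence, observe that $G$ acts by Jordan algebra automorphisms for the structure pointed at a $G$-invariant $e$, and use the quadratic representation $Q(b)=2L(b)^2-L(b^2)$ applied to a square root of the target point to produce the required element of $C_{\Aut(\mathcal{A})}(G)$. The only cosmetic difference is that the paper extracts the square root inside the invariant subalgebra $(V^G,e,\circ)$ while you obtain its $G$-invariance from uniqueness of square roots in $\mathcal{A}$; both are valid.
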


\begin{proof}
Let $e,a\in\mathcal{A}^G$. Consider the Jordan algebra $(V,e,\circ)$ associated to the cone $\mathcal{A}$ pointed at $e$ by Theorem \ref{thm-koechervinberg}. By Remark \ref{rem-koechervin} and by the proof of Lemma \ref{lem-quotient-selfdualhom}, we have
$$\mathcal{A}^G = \{b\circ b \mid b\in V^G \mbox{ invertible}\}.$$
Let $b\in V^G$ be an invertible element such that $a=b\circ b$. Then the quadratic operator $Q(b)$ defined in Remark \ref{rem-koechervin} belongs to $\Aut(\mathcal{A})$ and sends $e$ to $a$. The fact that $Q(b)$ commutes with any $g\in G$ follows from the fact that both $b$ and $\circ$ are perserved by the $G$-action.
\end{proof}

\subsection{Descent of well-clipped cones}

The next proposition is inspired by the work of Oguiso--Sakurai \cite{OS01}. Before stating it, we need a definition.

\begin{definition}\label{def-rhoG}
Let $V$ be a finite dimensional $\R$-vector space, and let $G <\mathrm{GL}(V)$ be a subgroup.
The invariant subspace $V^G$ is preserved by the action of the normalizer subgroup $N_{\mathrm{GL}(V)}(G)$. We denote by
\begin{align*}
\rho^G: N_{\mathrm{GL}(V)}(G) 
&\to \mathrm{GL}(V^G) \\
\phi & \mapsto  \phi|_{V^G}
\end{align*}
the representation induced by restriction of automorphisms.
\end{definition}

%Let us justify the preservation of the invariant subspace in this definition.

%\begin{proof}
%Let $h\in N_{\mathrm{GL}(V)}(G)$ and $v\in V^G$. Let $g\in G$. 
%We have $gh(v) = h(h^{-1}gh)(v) = h(v)$, using the fact that $h^{-1}gh\in G$. This shows that $h(v)\in V^G$, as wished.
%\end{proof}

\begin{proposition}\label{prop-descentwellclipped}
Let $\mathcal{A}$ be a self-dual homogeneous cone in $V=V_{\Z}\otimes \R$. 
Let $\mathcal{C}$ be a well-clipped cone in $\mathcal{A}$, cut out by hyperplanes $(H_i)_{i\in I}$ with corresponding orthogonal reflections $(\sigma_i)_{i\in I}$. Let $G$ be a subgroup of $\Aut(\mathcal{C}^{\circ},\mathcal{A},V_{\Z})$ acting with finite orbits.
Then there exist a subset $I^*\subset I$ and a self-dual homogeneous cone $\mathcal{B}\subset V^{G}=V^G_{\Z}\otimes \R$ with the same round part as $\mathcal{A}^G$, whose simplicial part is ruled by the simplical part of $\mathcal{A}^G$,
such that
\begin{enumerate}
\item The invariant cone $\mathcal{C}^G$ is well-clipped in $\mathcal{B}$, and is cut out by the hyperplanes $h_i = H_i\cap V^G$ indexed by $I^*$.
\item The orthogonal reflection group $\langle \tau_i\mid i\in I^*\rangle$ for the hyperplanes $(h_i)_{i\in I^*}$ is contained in $\rho^G(C_{\langle \sigma_i\mid i\in I\rangle}(G))$.
\end{enumerate}
\end{proposition}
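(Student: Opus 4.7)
Since $G$ preserves $\mathcal{C}$, it permutes the supporting hyperplanes $(H_i)_{i\in I}$. Because every $g\in G$ fixes $V^G$ pointwise, the restriction $h_i := H_i\cap V^G$ depends only on the $G$-orbit of $i$. Averaging a point of $\mathcal{C}^\circ$ over $G$ exhibits $\mathcal{C}^\circ \cap V^G \neq \emptyset$, so $V^G$ meets every open half-space $H_{i,+}$, ensuring $h_i$ is a proper hyperplane of $V^G$ with $q$-normal equal to the projection $e_i^G := |G|^{-1}\sum_{g\in G} g(e_i)$. Because $G$ permutes the indecomposable summands $(\mathcal{A}_j)$ of $\mathcal{A}$ and $e_i\in\mathrm{Span}\,\mathcal{A}_{j(i)}$, the vector $e_i^G$ lies in the invariant subspace of the $G$-orbit sum $\bigoplus_{j\in G\cdot j(i)}\mathrm{Span}\,\mathcal{A}_j$; by Lemma~\ref{lem-quotient-selfdualhom}, the corresponding invariant cone is a single summand of $\mathcal{A}^G$ of one of three types: hyperbolic (Case~A), sum of two half-lines (Case~B), or a single half-line (Case~C).

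\textbf{Construction of $\mathcal{B}$ and $I^*$.} In Case~C, one checks directly that $\mathcal{A}^G\subset\overline{h_{i,+}}$, so $h_i$ is vacuous. In Case~B, the line $h_i$ cuts the $2$-dimensional wedge $\R_+ f\oplus \R_+ f'$ inside $\mathcal{A}^G$ along a rational half-line $\R_+ g$, and $\mathcal{A}^G\cap\overline{h_{i,+}}$ is again a sum of two half-lines spanned by either $\{f,g\}$ or $\{f',g\}$. I absorb this cut into the simplicial structure by performing this half-line exchange for each Case~B orbit representative, and I define $\mathcal{B}:=\mathrm{rd}(\mathcal{A}^G)\oplus\mathcal{S}$, where $\mathcal{S}$ is the resulting simplicial cone; by construction $\mathcal{S}$ is ruled by $\mathrm{simp}(\mathcal{A}^G)$ in the sense of Definition~\ref{def-simplicialpart-preorder}. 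Let $I^*\subset I$ be a set of representatives of the $G$-orbits of indices in Case~A. Then $\mathcal{C}^{G,\circ}=\mathcal{B}^\circ\cap\bigcap_{i\in I^*} h_{i,+}$, Assumption~(i) of Definition~\ref{def-nicelycutout} holds by design (each $e_i^G$ for $i\in I^*$ lies in one hyperbolic summand of $\mathcal{B}$), and Assumption~(iii) follows from
\[
q(e_i^G,e_j^G) \;=\; \frac{1}{|G\cdot i|\,|G\cdot j|}\sum_{k\in G\cdot i,\,l\in G\cdot j} q(e_k,e_l) \;\geq\; 0,
\]
each term being non-negative by the original (iii) (distinct representatives in $I^*$ give distinct hyperplanes).

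\textbf{Reflections, centralizer statement, and main obstacle.} For statement~(2) and Assumption~(ii), I will exhibit, for each $i\in I^*$, an element $\eta_i\in C_{\langle \sigma_j\mid j\in I\rangle}(G)$ with $\rho^G(\eta_i)=\tau_i$. My candidate is the longest element $w_0$ of the Coxeter subgroup $W_i := \langle \sigma_j \mid j\in G\cdot i\rangle$: as $G$ permutes the generators of $W_i$ and conjugation preserves Coxeter length, the uniqueness of $w_0$ forces $g w_0 g^{-1}=w_0$ for every $g\in G$, so $w_0\in C(G)$. Moreover, every $\sigma_j$ with $j\in G\cdot i$ fixes $H_j$ pointwise, so $w_0$ fixes $\bigcap_{j\in G\cdot i} H_j \supset h_i$ pointwise. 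The standard property that $w_0$ sends each simple root to the negative of another simple root, namely $w_0(e_j)=-e_{\omega(j)}$ for some permutation $\omega$ of $G\cdot i$, gives
\[
w_0(e_i^G) \;=\; \tfrac{1}{|G\cdot i|}\sum_j w_0(e_j) \;=\; -\tfrac{1}{|G\cdot i|}\sum_j e_{\omega(j)} \;=\; -e_i^G.
\]
Since $V^G = h_i \oplus \R\,e_i^G$ as a $q|_{V^G}$-orthogonal decomposition, this forces $\rho^G(w_0)=\tau_i$, and $w_0\in\langle \sigma_j\rangle\subset \Aut(\mathcal{A},V_\Z)$ then implies that $\tau_i$ preserves $V^G_\Z$, giving Assumption~(ii). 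The main obstacle is to show that $W_i$ is a \emph{finite} Coxeter group, i.e.\ that the Gram matrix of $(e_j)_{j\in G\cdot i}$ is negative definite: this I expect to follow from the non-degeneracy of $\mathcal{C}$ (which rules out affine or indefinite Coxeter types, since such types would force the dual wedge $\bigcap_{j\in G\cdot i} H_{j,+}$ to collapse) combined with Assumption~(iii) and the angle restrictions of Lemma~\ref{lem-coxeter}.
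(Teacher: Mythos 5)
Your construction of $\mathcal{B}$ and $I^*$, and your verification of Assumptions (i) and (iii), agree in substance with the paper, and your use of the longest element $w_0$ of the Coxeter group $W_i=\langle\sigma_j\mid j\in G\cdot i\rangle$ is an attractive alternative to the paper's explicit computation: if $W_i$ is finite, $w_0\in C_{\langle\sigma_j\rangle}(G)$ by uniqueness of the longest element, and the identity $w_0(e_i^G)=-e_i^G$ immediately yields $\rho^G(w_0)=\tau_i$, giving both Item~(2) and Assumption~(ii) at once.

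The genuine gap is the finiteness of $W_i$, which you identify but do not resolve, and your proposed route to it will not work. Non-degeneracy of $\mathcal{C}$ together with the angle restrictions of Lemma~\ref{lem-coxeter} and Assumption~(iii) does \emph{not} force the Gram matrix of $(e_j)_{j\in G\cdot i}$ to be negative definite: a finitely generated Coxeter group on a hyperbolic slab can perfectly well be affine or Lorentzian while its chamber $\bigcap_{j} H_{j,+}$ remains a full-dimensional non-degenerate cone (this is exactly what happens for, say, the Weyl group of a generic K3 surface). The missing ingredient is the numerical condition $q(\varepsilon_i)<0$, where $\varepsilon_i=\sum_{g\in G}g(e_i)$, which your definition of $I^*$ (``Case~A representatives'') omits; without it, some $h_i$ are vacuous ($\mathcal{A}_{j(i)}^G\subset \overline{h_{i,+}}$ whenever $q(\varepsilon_i)\ge 0$) and, more to the point, $W_i$ can be infinite. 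Once $q(\varepsilon_i)<0$ is imposed, the paper combines it with the quantization $q(e_i,g_u(e_i))\in\frac{s_i}{2}\Z_{\ge0}$ forced by Assumptions~(ii) and~(iii): writing $\eta_i=e_i+\sum_{u\ge2}g_u(e_i)$, the inequality $q(e_i,\eta_i)<0$ means $\sum_{u\ge2}q(e_i,g_u(e_i))<s_i$, so at most one summand is nonzero and then equals $s_i/2$; by $G$-transitivity the orbit $G\cdot e_i$ decomposes into mutually $q$-orthogonal blocks of uniform size $1$ or $2$, each contributing an $A_1$ or $A_2$ factor, so $W_i$ is finite. With this established your $w_0$-argument goes through and is arguably cleaner than the paper's explicit assembly of $b=\prod_v b_v$ from reflections and triple products $\sigma_e\sigma_{e'}\sigma_e$.
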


We recall from Section 2 that $C_{\langle \sigma_i\mid i\in I\rangle}(G)$ denotes the centralizer of $G$ in ${\langle \sigma_i\mid i\in I\rangle}$. We also refer to Definitions \ref{def-rdpart} and \ref{def-simplicialpart-preorder} on round and simplicial parts.
In a first read, the reader might want to focus on the conclusion presented as Item (1); It says that being a well-clipped cone descends under finite quotients. The conclusion reached in Item (2) is equally important, but mostly for technical reasons.

The key of the proof is to extract orthogonal reflections on the invariant space $V^G$ out of a large Weyl group of orthogonal reflections on $V$, on which $G$ acts by permuting the (infinitely many) generators. Merely keeping the orthogonal reflections that are fixed by this action by permutation of $G$ is not enough: We need to build more. This difficulty is already mentioned in \cite[1.4]{OS01}, and the way we overcome it is quite similar to \cite[1.6]{OS01}. However, we have to work in the framework of convex cones, which is not as nice as working in the Néron-Severi space of a K3 surface (the set-up of \cite{OS01}) and bans us from using geometric tools like the Riemann--Roch and Noether formulas.

\begin{proof}
By Definition \ref{def-nicelycutout} and Remark \ref{rem-minimal}, we can write
\begin{equation}\label{eq-pfdescent}
\overset{\circ}{\mathcal{C}}=\mathcal{A}\cap \bigcap_{i\in I} {H_{i,+}},
\end{equation}
where $({H_i}^{+})_{i\in I}$ is a minimal set of open halfspaces delimited by rational hyperplanes $H_i$ satisfying Assumptions (i), (ii), (iii). We choose the index set $I$ to be minimal as in Remark \ref{rem-minimal}.
For $i\in I$, we set $e_i$ to be the generator of $H_i^{\perp}\cap H_{i,-}\cap V_{\Z}$, and we define $h_i := H_i\cap V^G$.
Each $h_i$ is a hyperplane in $V^G$. Indeed, picking an element $c_0\in\mathcal{C}^{\circ}$, we construct a $G$-invariant element of $\mathcal{C}^{\circ}$ as
\begin{equation}\label{eq-celement}
c := \sum_{g\in G/\mathrm{Stab}(c_0)} g(c_0).
\end{equation}
Since $q(e_i,c) > 0$, the orthogonal complement $H_i = e_i^{\perp}$ does not contain $V^G$, and thus $h_i$ is indeed a hyperplane in $V^G$.

Note that the finite sum
\begin{equation}\label{eq-eps}
\eps_i := \sum_{g\in G/{\rm Stab}(e_i)} g(e_i)
\end{equation}
is an element of $h_i^{\perp}\subset V^G$, and of the invariant lattice $V_{\Z}^G$.
We decompose $\mathcal{A}$ into its irreducible summands and regroup them into the hyperbolic cones $(\mathcal{H}_j)_{j\in J}$ indexed by a finite set $J$ on one hand, and the sum of all remaining summands $\mathcal{A}_{\mathrm{nh}}$ on the other hand. Since the decomposition into irreducible summands is unique, the action by $G$ preserves the cone $\mathcal{A}_{\mathrm{nh}}$ and permutes the hyperbolic summands. We consider the induced action on the index set $J$. We denote by $J/G$ the set of the $G$-orbits in $J$. For each $\omega\in J/G$, we choose a representative $j_{\omega}\in J$. We then denote by $n_{\omega}$ the cardinality of the orbit $G\cdot j_{\omega}$, and by $G_{\omega}$ the stabilizer of $j_{\omega}$ in $G$: Neither depends on the choice of the representative $j_{\omega}$. With this notation, we have
\begin{equation}\label{eqAG}
\mathcal{A}^G = (\mathcal{A}_{\mathrm{nh}})^G\oplus\bigoplus_{\omega\in J/G}({\mathcal{H}_{j_{\omega}}}^{\oplus n_{\omega}})^G = (\mathcal{A}_{\mathrm{nh}})^G\oplus\bigoplus_{\omega\in J/G}{\mathcal{H}_{j_{\omega}}}^{G_{\omega}},
\end{equation}
where the second equality follows from Lemma \ref{lem-quotient-selfdualhom} (4).

With the notations of Definition \ref{def-nicelycutout} (i), we set
$$J_d := \{j\in J\mid \dim {\mathcal{H}_j}^{G_j} = d\},$$
for $d\in\N$. Note that it is a union of $G$-orbits in $J$. We now define the smaller index set
$$I^* := \{i\in I \mid j(i)\notin J_1\cup J_2\mbox{ and } q(\eps_i) < 0\},$$
and the smaller cone
\begin{equation}\label{eqB}
\mathcal{B} := (\mathcal{A}_{{\rm nh}})^G \oplus \bigoplus_{\omega\in (J\setminus J_{2})/G} {\mathcal{H}_{j_{\omega}}}^{G_{\omega}} \oplus \bigoplus_{\psi\in J_2/G} \mathcal{B}_{\psi},
\end{equation}
where for $\psi\in J_2/G$, we set
$$\mathcal{B}_{\psi}:={\mathcal{H}_{j_{\psi}}}^{G_{\psi}} \cap \bigcap_{\substack{i\in I\\ [j(i)] = \psi\\ q(\eps_i) < 0}} h_{i,+}.$$

For $\psi\in J_2/G$, the cones ${\mathcal{H}_{j_{\psi}}}^{G_{\psi}}$ and $\mathcal{B}_{\psi}$ are open non-degenerate two-dimensional cones, hence self-dual homogeneous cones. Thus and by Lemma \ref{lem-quotient-selfdualhom} (1), $\mathcal{B}$ is a self-dual homogeneous cone in $V^G$. It is contained in $\mathcal{A}^G$.

For $\omega\in (J\setminus J_1\cup J_2)/G$, Lemma \ref{lem-quotient-selfdualhom} (3) show that the invariant cone ${\mathcal{H}_{j_{\omega}}}^{G_{\omega}}$ is hyperbolic, and thus appears as an irreducible factor in the round parts of $\mathcal{B}$ and $\mathcal{A}^G$. In fact, Identities \eqref{eqAG} and \eqref{eqB} imply that the two round parts coincide:
$$\rd\mathcal{B} = \rd (\mathcal{A}_{{\rm nh}})^G \oplus \bigoplus_{\omega\in (J\setminus J_1\cup J_{2})/G} {\mathcal{H}_{j_{\omega}}}^{G_{\omega}} = \rd\mathcal{A}^G.$$
For the simplicial parts, we will later prove that, for each $\psi\in J_2/G$, the simplicial cone ${\mathcal{H}_{j_{\psi}}}^{G_{\psi}}$ rules the simplicial cone $\mathcal{B}_{\psi}$. Then, taking the direct sum over all $\psi\in J_2/G$, adding the simplicial part of $(\mathcal{A}_{{\rm nh}})^G$ and the half-lines of the form ${\mathcal{H}_{j_{\omega}}}^{G_{\omega}}$ for $\omega\in J_1/G$ on both sides, it will follow from Remark \ref{rem-rules} that the simplicial part of $\mathcal{A}^G$ rules the simplicial part of $\mathcal{B}$.

\medskip

We now show that the $(h_i)_{i\in I^*}$ satisfy Assumption (i) of Definition \ref{def-nicelycutout}. Fix $i\in I^*$ and denote by $G_{j(i)}$ the stabilizer of the index $j(i)$ in $G$. By Lemma \ref{lem-quotient-selfdualhom} (3), the cone ${\mathcal{H}_{j(i)}}^{G_{j(i)}}$, which has dimension at least three, is of hyperbolic type. Recall from Equation \eqref{eq-eps} that $h_i$ is the orthogonal hyperplane to $\varepsilon_i\in V^G$. Since $i\in I^*$, we have $q(\varepsilon_i)<0$, so checking that $\varepsilon_i$ belongs to the $\R$-linear span of the cone ${\mathcal{H}_{j(i)}}^{G_{j(i)}}$ is enough to prove Assumption (i) for $h_i$. But, since $\varepsilon_i$ is the sum of the elements in the $G$-orbit of $e_i\in{\rm Span}_{\R}\mathcal{H}_{j(i)}$, it indeed holds
$$\varepsilon_i \in V^G\cap \bigoplus_{j\in G\cdot j(i)}{\rm Span}_{\R}\mathcal{H}_{j}
= {\rm Span}_{\R}{\mathcal{H}_{j(i)}}^{G_{j(i)}}.$$

\medskip

We then show that the $(h_i)_{i\in I^*}$ satisfy Assumption (iii) of Definition \ref{def-nicelycutout}. Fix $i, k\in I^*$ such that $h_i\neq h_k$. Then, by Lemma \ref{lem-roots} (5), it suffices to check that $q(\eps_i,\eps_k)\ge 0$. Proposition \ref{prop-roots} shows that the action of $G$ on $\mathcal{C}$ permutes the $(e_i)_{i\in I}$. Since $h_i\neq h_k$, the two orbits $G\cdot e_i$ and $G\cdot e_k$ are distinct, thus disjoint, and thus 
$$q(\eps_i,\eps_k) = \sum_{\substack{g\in G/{\rm Stab}(e_i)\\ g'\in G/{\rm Stab}(e_k)}} q(g(e_i),g'(e_k)) \ge 0,$$
using the fact that the $(H_i)_{i\in I}$ satisfy Assumption (iii).

\medskip

We now claim that $\mathcal{C}^G$ is cut out by the (possibly non-minimal) set of hyperplanes $(h_i)_{i\in I^*}$ in the cone $\mathcal{B}$. To see this, take Identity \eqref{eq-pfdescent}, intersect with $V^G$, and express $\mathcal{A}^G$ in terms of $\mathcal{B}$ using Identities \eqref{eqAG} and \eqref{eqB} to see that
\begin{equation}\label{eq-pfdescent-Gint}
\overset{\circ}{\mathcal{C}^G}=\mathcal{B}
\cap \bigcap_{\substack{i\in I\\ j(i)\in J_1}} {h_{i,+}}
\cap \bigcap_{\substack{i\in I\\ j(i)\in J_2\\ q(\eps_i)\ge 0}} {h_{i,+}}
\cap \bigcap_{\substack{d\ge 3 \\ i\in I\\ j(i)\in J_{d}}} h_{i,+} .
\end{equation}
Let $i\in I$ such that $j(i)\in J_1$ or $q(\eps_i) \ge 0$. Let $V_{j(i)}$ denote the linear span of the cone ${\mathcal{H}_{j(i)}}^{G_{j(i)}}$. As shown in Lemma \ref{lem-quotient-selfdualhom} (3), the restricted quadratic form $q|_{{V_{j(i)}}}$ is hyperbolic, and takes positive values on the cone ${\mathcal{H}_{j(i)}}^{G_{j(i)}}$. Note that in the case when $j(i)\in J_1$, the form is in fact positive definite, and so $q(\eps_i)\ge 0$. 
We are now in the more general case where $i\in I$ and $q(\eps_i)\ge 0$.
The non-negativity of $q(\eps_i)$ proves that the element $\eps_i$ or its opposite $-\eps_i$ belongs to the closure of the hyperbolic cone $\mathcal{H}_{j(i)}^{G_{j(i)}}$. From \cite[Proposition 6.4]{Iversen}, we derive that
\begin{itemize}
\item[a)] either $q(\eps_i,v)\ge 0$ for every $v\in {\mathcal{H}_{j(i)}}^{G_{j(i)}}$,
\item[b)] or $q(\eps_i,v)\le 0$ for every $v\in {\mathcal{H}_{j(i)}}^{G_{j(i)}}$.
\end{itemize}
By \eqref{eq-pfdescent-Gint}, the linear form $q(\eps_i,\cdot)$ takes positive values on ${\mathcal{C}^{\circ}}^G$, a fortiori on its non-empty subset ${\mathcal{C}^{\circ}}^G\cap V_{j(i)}$ contained in ${\mathcal{H}_{j(i)}}^{G_{j(i)}}$. Thus, Alternative a) holds, that is ${\mathcal{H}_{j(i)}}^{G_{j(i)}}$ is contained in $h_{i,+}$.
This discussion allows to rewrite Identity \eqref{eq-pfdescent-Gint} as 
\begin{equation}\label{eq-pfdescent-G}
\overset{\circ}{\mathcal{C}^G}=\mathcal{B}\cap \bigcap_{i\in I^*} h_{i,+},
\end{equation} 
and indeed $\mathcal{C}^G$ is cut out by the hyperplanes $(h_i)_{i\in I^*}$ in the cone $\mathcal{B}$.

\medskip

We now prove that, for every $i\in I$ such that $q(\eps_i)< 0$, the orthogonal reflection with respect to $h_i$, given by the formula
\begin{equation}\label{eq-taui}
\tau_i:x\in V^G \mapsto  x  -\frac{2q(e_i,x)}{q(e_i,\eps_i)}\eps_i \in V^G,
\end{equation}
is integral with respect to the invariant lattice $V_{\Z}^G$. Using the $G$-invariance of $q$, we can also rewrite
$$\tau_i(x) = x - \frac{2q(\eps_i,x)}{q(\eps_i,\eps_i)}\eps_i.$$

Fix $i\in I$ such that $q(\eps_i)< 0$. Let $s_i:=-q(e_i)\in\Z_{>0}$ and let $d_i$ denote the cardinality of the orbit $G\cdot e_i$. Let $\{g_1,\ldots g_{d_i}\}$ be a set of representatives of the left cosets $G/{\rm Stab}(e_i)$, with $g_1=1$. We take note that 
\begin{equation}\label{eq-negativesum}
0 > q(\eps_i) = d_i\, q(e_i,\eps_i) = d_i \left(-s_i + \sum_{u=2}^{d_i} q(e_i,g_u(e_i))\right).
\end{equation}
For $u\ge 2$, since $g_u(e_i)$ and $e_i$ are distinct, Lemma \ref{lem-roots} (5) shows that
$$q(e_i,g_u(e_i))\in \frac{s_i}{2}\, \Z_{\ge 0}.$$
In particular, there is at most one $u\ge 2$ such that $q(e_i,g_u(e_i))$ is non-zero, and then 
it equals $\frac{s_i}{2}$. Furthermore, $q(e_i,\eps_i)$ takes one of the two values $-s_i,\frac{-s_i}{2}$. This shows that $s_i\Z\subset q(e_i,\eps_i)\Z$. Since moreover $2q(e_i,V_{\Z}^G) \subset 2q(e_i,V_{\Z}) \subset s_i \Z$, we conclude by Formula \eqref{eq-taui} that $\tau_i$ is integral with respect to the invariant lattice $V_{\Z}^G$. 

\medskip

In particular, this shows that the hyperplanes $(h_i)_{i\in I^*}$ satisfy Assumption (ii) of Definition \ref{def-nicelycutout}.

\medskip

Let us now prove that, for every $i\in I$ such that $q(\eps_i)<0$, the reflection $\tau_i$ is an element of the group $\rho^G(C_{\langle \sigma_i\mid i\in I\rangle}(G))$.
To sum up, we already showed using inequality \eqref{eq-negativesum} that for any element $e$ in the orbit $G\cdot e_i$, there is at most one other element $e'$ in that orbit such that $q(e,e') > 0$, and in that case 
\begin{equation}\label{eq-lazy}
s_i = -q(e) = -q(e') = -q(e+e') = 2q(e,e').
\end{equation}
This allows to decompose the orbit $G\cdot e_i$ into finitely many disjoint subsets $(B_v)$ of size $1$ or $2$
$$G\cdot e_i = \bigsqcup_{v=1}^r B_v,$$
with $e,e'\in B_v$ for some $v$ if and only if $q(e,e')\neq 0$.
Since $q$ is $G$-invariant, this decomposition is preserved by the $G$-action on $G\cdot e_i$, which is transitive. In particular, all $B_v$ are of the same size, either 1 or 2. 
To each index $v$, we associate an element $b_v\in \langle \sigma_i\mid i\in I\rangle$ as follows:
\begin{itemize}
\item If $B_v=\{e\}$, we take $b_v = \sigma_e$.
\item If $B_v= \{e,e'\}$, we set $b_v = \sigma_e\sigma_{e'}\sigma_e = \sigma_{e'}\sigma_{e}\sigma_{e'}$, which is given by
$$b_v:x\in V\mapsto x - \frac{2q(e+e',x)}{q(e)}(e+e') \in V.$$
This can be checked by an explicit computation involving Identity \eqref{eq-lazy} or by a Coxeter group argument in the spirit of Lemma \ref{lem-coxeter}. Note that this element $b_v$ is fixed by the transposition exchanging $e$ and $e'$.
\end{itemize}
 Since the $B_v$ are mutually $q$-orthogonal, it is clear that all of the $b_v$ commute with one another. This ensures that the element $b:=\prod_{v=1}^r b_v$ is well-defined independently of the order in which the product is taken. Recall that each $g\in G$ induces a permutation $s_g$ on $\{1,\ldots r\}$ determined by $g(B_v) = B_{s_g(v)}$. It is easy to check from our formula that $gb_vg^{-1} = b_{s_g(v)}$. Taking the product, we see that $b$ belongs to the centralizer 
$C_{\langle \sigma_i\mid i\in I\rangle}(G).$
 
 What we claim is that $\tau_i = \rho^G(b)$. Indeed, it is easy to compute the composition of commuting reflections: 
\begin{itemize}
\item If all $B_v$ are of size $1$, we have $q(e_i,\eps_i)=q(e_i)$, and
 $$b|_{V^G}:x\in V^G \mapsto  x - \frac{2q(e_i,x)}{q(e_i)}\eps_i,$$
where we recall that $\eps_i$ is the sum of the orbit elements in $G\cdot e_i$ (taken once each), and use the fact that $q(e,x) = q(e_i,x)$ for $x\in V^G$ and for any $e\in G\cdot e_i$.
\item If all $B_v$ are of size $2$, we have $q(e_i,\eps_i)=\frac{1}{2}q(e_i)$, and
 $$b|_{V^G}:x\in V^G \mapsto  x - \frac{4q(e_i,x)}{q(e_i)}\eps_i 
 \; = x - \frac{2q(e_i,x)}{q(e_i,\eps_i)}\eps_i.$$
 \end{itemize}
 In either case, we have $\tau_i = \rho^G(b)$ as wished.
 
 \medskip

We finally return to the simplicial parts. We fix $k\in J_2$ and prove, as promised, that the cone $\mathcal{H}_k^{G_k}$ rules the cone $\mathcal{B}_k$. Let $V_k$ denote the linear span of the cone $\mathcal{H}_k^{G_k}$. Since $k\in J_2$, it is of dimension two. We just showed that the orthogonal reflection group
$$W_k := \langle \tau_i\mid i\in I,\, [j(i)] = [k],\, q(\eps_i) < 0\rangle$$
preserves the lattice $V_{\Z}^G$. It also preserves the cone $\mathcal{H}_k^{G_k}$ and the linear space $V_k$ in $V$. Restricting gives rise to a faithful action
$$W_k\hookrightarrow \Aut(\mathcal{H}_k^{G_k},q|_{V_k},V_k\cap V_{\Z}^G) \subset {\rm O}^+(q|_{V_k}).$$
Since the irreducible summands of $\mathcal{A}$ are compatible with the rational structure on $V$, the intersection of $V_{\Z}^G$ with $V_k$ remains a lattice in $V_k$. This shows that $W_k$ is a discrete subgroup of ${\rm O}^+(q|_{V_k})$. Choosing the two extremal rays of the cone $\mathcal{H}_k^{G_k}$ as a (real, not necessarily rational) basis for $V_k$ further identifies a subgroup of index at most two of $W_k$ with a discrete subgroup of the diagonal matrices 
$$\left\lbrace \mathrm{diag}\left(t,\frac{1}{t}\right)\mid t\in\R_{> 0}\right\rbrace.$$
Via the isomorphism of topological groups
$$\theta\in (\R,+)\mapsto \mathrm{diag}\left(e^{\theta},e^{-\theta}\right),$$
we realize our subgroup of index at most two of $W_k$ as a discrete subgroup of $(\R,+)$, that is a trivial or infinite cyclic group. Thus, $W_k$ is isomorphic to a subgroup of $\Z\rtimes\Z/2\Z$.
\begin{itemize}
\item If $W_k$ is trivial, then $\mathcal{B}_k = \mathcal{H}_k^{G_k}$.
\item The case $W_k\simeq \Z$ cannot occur, since $W_k$ is generated by involutions.
\item If $W_k\simeq \Z/2\Z$, then $\mathcal{B}_k$ is cut out by one rational hyperplane inside $\mathcal{H}_k^{G_k}$, thus ruled by $\mathcal{H}_k^{G_k}$ by the last item of Remark \ref{rem-rules}.
\item Finally, if $W_k\simeq \Z\rtimes\Z/2\Z$, let $r$ denote the generator of the cyclic subgroup of index $2$. Fix $i_0\in I$ such that $[j(i_0)]=[k]$ and $q(\eps_{i_0})<0$. We have the following description:
$$\{h_i\mid i\in I, [j(i)]=[k], q(\eps_i)<0\} = \{r^n(h_{i_0})\mid n\in\Z\}.$$
The cone $\mathcal{B}_k$ is cut out by the halfspaces corresponding to these hyperplanes in $\mathcal{H}_k^{G_k}$. Thus, it is a connected component of 
$$\mathcal{H}_k^{G_k}\setminus \bigcup_{n\in\Z} r^n(h_{i_0}).$$
So $\mathcal{B}_k$ is spanned by two rays of the form $r^n(h_{i_0})$ and $r^{n+1}(h_{i_0})$ for some $n\in\Z$, that are both rational. Thus, by the third item of Remark \ref{rem-rules}, $\mathcal{B}_k$ is ruled by $\mathcal{H}_k^{G_k}$.
\end{itemize}

\end{proof}

\subsection{Centralizer and symmetries of the invariant cone}

\begin{lemma}\label{lem-normalize-commens}
Let $\Gamma_1$ and $\Gamma_2$ be two commensurable subgroups of a group $\Gamma$, and let $G$ be a subgroup of $\Gamma$. Then the centralizers $C_{\Gamma_1}(G)$ and $C_{\Gamma_2}(G)$ are commensurable.
\end{lemma}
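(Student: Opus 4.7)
The plan is to reduce the question to the elementary fact that finite index is inherited by intersection with any subgroup. Concretely, set $\Lambda := \Gamma_1\cap\Gamma_2$; by commensurability of $\Gamma_1,\Gamma_2$, $\Lambda$ has finite index in both $\Gamma_1$ and $\Gamma_2$. It suffices to show that $C_\Lambda(G)$ has finite index in each of $C_{\Gamma_1}(G)$ and $C_{\Gamma_2}(G)$, since then those two centralizers share a common finite-index subgroup, proving the commensurability.

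The main observation is that for any subgroup $H<\Gamma$,
\[ C_H(G) = H \cap C_\Gamma(G). \]
Applying this with $H = \Lambda, \Gamma_1, \Gamma_2$ and setting $L := C_\Gamma(G)$, we get
\[ C_\Lambda(G) = \Lambda \cap L = \Gamma_1 \cap L \cap \Gamma_2 = C_{\Gamma_1}(G) \cap \Gamma_2 = C_{\Gamma_2}(G) \cap \Gamma_1. \]
Then I would invoke the standard group-theoretic fact: if $A < B < \Gamma$ with $[B:A]<\infty$ and $L$ is any subgroup of $\Gamma$, then $[L\cap B : L\cap A] \le [B:A]$, the proof being that the natural map $(L\cap B)/(L\cap A) \hookrightarrow B/A$ given by $g(L\cap A)\mapsto gA$ is well-defined and injective.

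Applying this fact with $B = \Gamma_1$, $A = \Lambda$, $L = C_\Gamma(G)$ yields
\[ [C_{\Gamma_1}(G) : C_\Lambda(G)] = [L\cap \Gamma_1 : L\cap \Lambda] \le [\Gamma_1 : \Lambda] < \infty, \]
and similarly with $\Gamma_1$ replaced by $\Gamma_2$. Therefore $C_\Lambda(G)$ has finite index in both $C_{\Gamma_1}(G)$ and $C_{\Gamma_2}(G)$, which establishes their commensurability.

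There is no real obstacle in this argument; the finiteness of $G$ is not even used. The only subtlety worth double-checking is that $C_\Lambda(G) = C_{\Gamma_1}(G) \cap C_{\Gamma_2}(G)$, which is immediate since an element of $\Lambda$ commutes with all of $G$ iff it lies in both $C_{\Gamma_1}(G)$ and $C_{\Gamma_2}(G)$.
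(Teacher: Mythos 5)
Your proof is correct and follows essentially the same route as the paper: reduce to the finite-index subgroup $\Gamma_1\cap\Gamma_2$, use the identity $C_H(G)=H\cap C_\Gamma(G)$, and invoke the standard fact that intersecting with a finite-index subgroup preserves finite index (which the paper simply cites from Suzuki, while you prove it via the injection of coset spaces). Your remark that finiteness of $G$ is not needed is also accurate.
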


\begin{proof}
It suffices to check that for any finite index subgroup $\Gamma_0<\Gamma_i$, the inclusion of centralizers $C_{\Gamma_0}(G) < C_{\Gamma_i}(G)$ is of finite index too. This follows from the identity
$$C_{\Gamma_0}(G) = \Gamma_0\cap C_{\Gamma_i}(G),$$
by \cite[(3.13)(i)]{SuzukiGroup}.
\end{proof}

\begin{lemma}\label{lem-normalizesemidirectprod}
Let $W$ and $Q$ be two groups. Let $\Gamma$ denote the semi-direct product $W\rtimes Q$ and let $G$ be a subgroup of $Q$. Then the centralizers satisfy 
$$C_{\Gamma}(G) = C_W(G)\rtimes C_Q(G).$$
\end{lemma}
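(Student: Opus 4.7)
The plan is entirely formal: everything follows from the uniqueness of decomposition in a semidirect product. I would first fix conventions. In $\Gamma = W\rtimes Q$, every element admits a unique decomposition $\gamma = wq$ with $w\in W$ and $q\in Q$, and $Q$ acts on $W$ by conjugation inside $\Gamma$ (so $qwq^{-1}\in W$ for all $q\in Q$, $w\in W$). The centralizer $C_W(G) = W\cap C_\Gamma(G)$ consists of those $w\in W$ commuting with every $g\in G$; this is well-defined even though $G\not\subset W$, since the commutation relation takes place inside $\Gamma$. Similarly $C_Q(G)$ is the genuine centralizer of $G$ in $Q$.

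For the inclusion $C_W(G)\cdot C_Q(G)\subset C_\Gamma(G)$, if $w\in C_W(G)$ and $q\in C_Q(G)$, then for every $g\in G$ both $w$ and $q$ commute with $g$ in $\Gamma$, hence so does their product $wq$.

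For the reverse inclusion, the key move is to take $\gamma = wq\in C_\Gamma(G)$ and, given $g\in G\subset Q$, to rewrite both $\gamma g$ and $g\gamma$ in canonical $WQ$-form. On the one hand, $\gamma g = w\cdot (qg)$, which is already in canonical form since $w\in W$ and $qg\in Q$. On the other hand,
$$g\gamma \;=\; g\cdot wq \;=\; (gwg^{-1})\cdot (gq),$$
which is in canonical form because $gwg^{-1}\in W$ and $gq\in Q$. Uniqueness of the decomposition in $W\rtimes Q$ then forces $w = gwg^{-1}$ and $qg = gq$. Quantifying over $g\in G$ yields $w\in C_W(G)$ and $q\in C_Q(G)$.

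A small leftover check, which I expect to present no obstacle, is that $C_Q(G)$ stabilizes $C_W(G)$ under conjugation, so that the notation $C_W(G)\rtimes C_Q(G)$ does describe a subgroup of $\Gamma$ with the induced semidirect product structure: for $q\in C_Q(G)$ and $w\in C_W(G)$, the element $qwq^{-1}\in W$ commutes with every $g\in G$ because $q$ does, so it lies in $C_W(G)$. No serious difficulty arises; the entire argument is a two-line manipulation of the semidirect product normal form.
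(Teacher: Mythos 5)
Your proof is correct and takes essentially the same route as the paper: write $\gamma=wq$, put $\gamma g$ and $g\gamma$ into canonical $WQ$-form, and invoke uniqueness of the decomposition to split the commutation condition into $w=gwg^{-1}$ and $qg=gq$. The extra check that $C_Q(G)$ normalizes $C_W(G)$ is a sensible addition that the paper leaves implicit.
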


\begin{proof}
Let $\gamma\in \Gamma$. We write $\gamma = (w,q)$ with $w\in W$ and $q\in Q$. Take $g\in G$, and view it as $(1,g)\in \Gamma$.
We compute
$$ \gamma g = (w,q)(1,g) = (w,qg)$$
and
$$g \gamma = (1,g)(w,q) = (g\cdot w, gq) = (gwg^{-1}, gq).$$
Hence, $\gamma$ belongs to $C_{\Gamma}(G)$ if and only if $q\in C_Q(G)$ and $w\in C_W(G)$, as wished.
\end{proof}

The next result is the main point of this section. It compares the centralizer subgroup of a group $G$ acting with finite orbits on a homogeneous cone $\mathcal{A}$ with the symmetries of the invariant cone $\mathcal{A}^G$. Recall that the representation $\rho^G$ was defined in Definition \ref{def-rhoG}.

\begin{proposition}\label{prop-centralizer-vs-autinv}
Let $\mathcal{A}$ be a self-dual homogeneous cone in $V=V_{\Z}\otimes \R$. Let $G$ be a subgroup of $\Aut(\mathcal{A},V_{\Z})$ acting with finite orbits.
Then the inclusion of groups
$$
\mathrm{Im}(\rho^G: C_{\Aut(\mathcal{A},V_{\Z})}(G)\to \mathrm{GL}(V^G) )
\quad <\quad
\Aut(\mathcal{A}^G,{V_{\Z}}^G)
$$
is of finite index.
\end{proposition}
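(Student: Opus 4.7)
The strategy is to realize both groups as arithmetic lattices inside the same connected algebraic $\Q$-group, namely $\Aut^{\circ}(\mathcal{A}^G)$, and then conclude by Borel--Harish-Chandra.

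First I would set up the algebraic groups. Let $\mathbf{L}:=\Aut^{\circ}(\mathcal{A})$, which is a connected linear algebraic group defined over $\Q$ by the proposition stated after Theorem \ref{thm-classification}. Since $\mathbf{L}$ is the identity component of $\Aut(\mathcal{A})$, it is normal there, so the finite group $G\subset \Aut(\mathcal{A},V_{\Z})$ acts on $\mathbf{L}$ by $\Q$-algebraic conjugation, and the fixed-point subgroup $\mathbf{L}^G=C_{\mathbf{L}}(G)$ is a closed $\Q$-algebraic subgroup. The restriction-to-$V^G$ map $\rho^G:\mathbf{L}^G\to \mathrm{GL}(V^G)$ is a $\Q$-morphism of $\Q$-algebraic groups; its image $\mathbf{H}:=\rho^G(\mathbf{L}^G)$ is a connected $\Q$-algebraic subgroup of $\mathrm{GL}(V^G)$, and clearly preserves $\mathcal{A}^G$, so $\mathbf{H}\subset \Aut^{\circ}(\mathcal{A}^G)$.

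The central step is to prove the equality $\mathbf{H}=\Aut^{\circ}(\mathcal{A}^G)$. I would pick $e\in \mathcal{A}^G\cap V_{\Q}^G$ (which exists as $\mathcal{A}^G$ is a nonempty open cone in $V^G$). By Theorem \ref{thm-koechervinberg}, $(\mathcal{A},e)$ corresponds to a formally real Jordan algebra $(V,e,\circ)$; the $G$-invariant subalgebra $(V^G,e,\circ)$ corresponds, via the same theorem, to $(\mathcal{A}^G,e)$. For each invertible $b\in V^G$, the quadratic representation $Q(b)=2L(b)^2-L(b^2)$ preserves $\mathcal{A}$ by \cite[Proposition III.2.2]{FK94} and lies in $\mathbf{L}$ by a connectedness argument (the polynomial map $b\mapsto Q(b)$ satisfies $Q(e)=\mathrm{id}$). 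Since $b$ and $\circ$ are $G$-invariant, $Q(b)$ commutes with $G$, hence $Q(b)\in \mathbf{L}^G$, and its restriction $\rho^G(Q(b))$ is the quadratic representation $Q^{V^G}(b)$ of the Jordan subalgebra $V^G$. Differentiating $Q(e+tx)$ at $t=0$ produces $2L(x)$, so the Lie algebra of $\mathbf{H}$ contains the full space of left multiplications $L(V^G)$. The Cartan-type decomposition $\mathfrak{str}(V^G)=\mathfrak{der}(V^G)\oplus L(V^G)$ together with the identity $[L(V^G),L(V^G)]\subset\mathfrak{der}(V^G)$ generating $\mathfrak{der}(V^G)$ in the semisimple summands (and triviality of $\mathfrak{der}$ on halfline summands) then shows that $L(V^G)$ Lie-generates $\mathfrak{str}(V^G)$, the Lie algebra of $\Aut^{\circ}(\mathcal{A}^G)$. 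Therefore $\mathbf{H}\supset\Aut^{\circ}(\mathcal{A}^G)$, so equality holds.

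Finally, I would conclude via reduction theory. Since $\rho^G:\mathbf{L}^G\twoheadrightarrow \Aut^{\circ}(\mathcal{A}^G)$ is a surjective $\Q$-morphism of reductive $\Q$-algebraic groups, the Borel--Harish-Chandra theorem ensures that $\rho^G(\mathbf{L}^G(\Z))$, where $\mathbf{L}^G(\Z):=\mathbf{L}^G\cap \mathrm{GL}(V_{\Z})$, is commensurable with the arithmetic lattice $\Aut^{\circ}(\mathcal{A}^G)\cap \mathrm{GL}(V_{\Z}^G)$. Because $\Aut(\mathcal{A})$ has finitely many connected components, $\mathbf{L}^G(\Z)$ has finite index in $C_{\Aut(\mathcal{A},V_{\Z})}(G)$; likewise $\Aut^{\circ}(\mathcal{A}^G)\cap \mathrm{GL}(V_{\Z}^G)$ has finite index in $\Aut(\mathcal{A}^G,V_{\Z}^G)$. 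Combining, $\rho^G(C_{\Aut(\mathcal{A},V_{\Z})}(G))$ is commensurable with $\Aut(\mathcal{A}^G,V_{\Z}^G)$; being contained in the latter, it must have finite index in it.

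The main obstacle is the middle step, identifying $\mathbf{H}$ with the full group $\Aut^{\circ}(\mathcal{A}^G)$. One might hope that the transitivity provided by Lemma \ref{lem-transitivecentralizer} would suffice, but connected transitive subgroups of $\Aut^{\circ}(\mathcal{A}^G)$ need not equal it (think of $AN\subset \mathrm{SL}_2(\R)$ acting transitively on the upper half-plane), so one really needs Jordan-algebraic input: the reductivity of $\mathbf{H}$ together with the fact that the quadratic representations $Q^{V^G}(b)$ alone generate all of $\Aut^{\circ}(\mathcal{A}^G)$ via the Cartan decomposition of its Lie algebra.
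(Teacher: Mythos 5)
Your proof is correct, and it follows a genuinely different path from the paper's, though both ultimately rest on the Koecher--Vinberg correspondence. The paper does not prove the surjectivity of $\rho^G$ onto $\Aut^{\circ}(\mathcal{A}^G)$ as a morphism of algebraic groups (which is your central step); instead it sidesteps this by a topological argument. Specifically, the paper uses Lemma \ref{lem-transitivecentralizer} to get a continuous surjection from the compact stabilizer $\Stab_{\Aut(\mathcal{A}^G)}(e)$ onto the coset space $Q_G:=\Aut(\mathcal{A}^G)/\rho^G(C_{\Aut(\mathcal{A})}(G))$, concluding that $Q_G$ is compact, then invokes an \emph{ad hoc} discreteness lemma (Lemma \ref{lem-imagediscrete}, proved in the paper via a Cauchy-sequence argument and Borel--Harish-Chandra cocompactness) to show that the image of the arithmetic group in $Q_G$ is finite; finally, it reconciles the discrepancy between $\rho^G(C_{\Aut(\mathcal{A},V_{\Z})}(G))$ and $\rho^G(C_{\Aut(\mathcal{A})}(G))\cap\GL(V_{\Z}^G)$ via a Maschke-plus-block-matrix computation and Lemma \ref{lem-normalize-commens}.

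Your approach trades the paper's compactness/discreteness argument for the stronger structural fact that $\rho^G(\mathbf{L}^G)=\Aut^{\circ}(\mathcal{A}^G)$: differentiating the quadratic representation $Q(e+tx)$ to get $L(x)$ in the Lie algebra of the image, then invoking the Cartan decomposition $\mathfrak{g}(\mathcal{A}^G)=\mathfrak{der}(V^G)\oplus L(V^G)$ and innerness of derivations for simple Euclidean Jordan algebras to see that $L(V^G)$ Lie-generates. You are right that mere transitivity (the content of Lemma \ref{lem-transitivecentralizer}) would not suffice to conclude surjectivity, and your $AN\subset\SL_2(\R)$ example pinpoints why. What you gain from the extra work is a cleaner endgame: once $\rho^G$ is known to be a surjective $\Q$-morphism, the theorem on arithmeticity of images (Borel, \emph{Introduction aux groupes arithm\'etiques}, Th.~8.9/8.11) gives the commensurability in one step, subsuming both the paper's Lemma \ref{lem-imagediscrete} and the block-matrix lattice verification. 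What you lose is self-containment: the paper prefers to hand-prove a weak discreteness lemma rather than cite the full strength of Borel's theorem. One small caution: your intermediate claim that $\mathbf{H}=\rho^G(\mathbf{L}^G)$ is the set of $\R$-points of a $\Q$-algebraic subgroup is not automatic from Chevalley (the real points of the image variety can strictly contain the image of the real points); the Lie-algebra equality and connectedness of $\Aut^{\circ}(\mathcal{A}^G)$ are what make the identification work, and you should say so explicitly.
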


%%Before proving this result, we establish two preparatory lemmas. The first one is a spin-off on Maschke's theorem.

\begin{comment}
\begin{lemma}\label{lem-maschke}
Let $V$ be a finite dimensional $\R$-vector space and $G<{\rm GL}(V)$ be a subgroup acting with finite orbits. Then $V^G$ admits a $G$-invariant complement $S\subset V$.
\end{lemma}

\begin{proof}
Let $p:V\twoheadrightarrow V^G$ be any linear projection. We define
$$\overline{p} : v\in V\mapsto \frac{1}{|G\cdot v|}\sum_{w \in G\cdot v} p(w)\in V^G.$$
Clearly, $\overline{p}$ restricts to the identity on $V^G$. Moreover, for all $g\in G$, we have $\overline{p}(g(v))=\overline{p}(v)$. Finally, we check that $\overline{p}$ is linear: Since $G$ acts with finite orbits, we have a subgroup inclusion of finite index:
$$H := {\rm Stab}(v_1)\cap {\rm Stab}(v_2) \quad < \quad {\rm Stab}(v_1+v_2)$$
for any $v_1,v_2\in V$. This implies:
\begin{align*}
\overline{p}(v_1+v_2) 
&= \frac{1}{|G/H|}\sum_{g \in G/H} p(g(v_1+v_2)) \\
&= \frac{1}{|G/H|}\sum_{g \in G/H} p(g(v_1)) \; +\;  \frac{1}{|G/H|}\sum_{g \in G/H} p(g(v_2))\\
&= \frac{|H/{\rm Stab}(v_1)|}{|G/H|}\sum_{u \in G\cdot v_1} p(u) 
\; +\; \frac{|H/{\rm Stab}(v_2)|}{|G/H|}\sum_{w \in G\cdot v_2} p(w)\\
&= \overline{p}(v_1) + \overline{p}(v_2),
\end{align*} 
as wished. Therefore, $\overline{p}$ is a $G$-equivariant projection from $V$ to $V^G$, and its kernel is a $G$-invariant complement to $V^G$.
\end{proof}
\end{comment}

%We can now prove Proposition \ref{prop-centralizer-vs-autinv}.

\begin{proof}%%[Proof of Proposition \ref{prop-centralizer-vs-autinv}]
We consider the linear algebraic group $C_{\Aut(\mathcal{A})}(G)$ with its natural embedding in $\GL(V)$. Let $P$ denote the intersection of the identity component of $C_{\Aut(\mathcal{A})}(G)$ with the preimage of the special linear subgroup $\SL(V^G)$ by the representation $\rho^G$: It is a connected linear algebraic group. Let $P_{\Z}$ denote the intersection of $P$ with the lattice $\GL(V_{\Z})$. Clearly, we have an inclusion
\begin{equation}\label{eq-anotherinclusion}
\rho^G(P_{\Z}) \quad < \quad \Aut(\mathcal{A}^G,V_{\Z}^G).
\end{equation}
We will show that it has finite index.

Let us fix a quadratic form $\mathrm{tr}$ on $V$ that is positive definite, $V_{\Z}$-integral, $\Aut(\mathcal{A},V_{\Z})$-invariant, and with respect to which the cone $\mathcal{A}$ is self-dual. It exists by Lemma \ref{lem-quadratic}. We consider the characteristic function of the cone $\mathcal{A}^G$: For $a\in\mathcal{A}^G$, it is given by
 $$\phi(a):= \int_{\mathcal{A}} e^{-{\rm tr}(a,x)}{\rm d}x \quad \in \R_{ >0},$$
 where ${\rm d}x$ denotes the Euclidean volume form on $V^G$; see \cite[Page 10]{FK94}. By \cite[Prop. I.3.1]{FK94}, the action of the group $\Aut(\mathcal{A}^G)\cap \SL(V^G)$ preserves each level set of the function $\phi$. We record the following consequence for later use.
 
 \medskip

 \noindent {\bf Fact.} The subgroup $\rho^G(P)$ acts on the level set $\phi^{-1}(t)$ for each $t\in\R_{> 0}$.
 
 \medskip
 
 Consider the set $S$ of extremal points of the convex hull of $\mathcal{A}^G\cap V_{\Z}^G$ in $V^G$. It is preserved by both $\rho^G(P_{\Z})$ and $\Aut(\mathcal{A}^G,V_{\Z}^G)$. By Theorem \ref{thm-amrt} and \cite[Proposition 4.1]{Loo14}, it consists of finitely many orbits of $\Aut(\mathcal{A}^G,V_{\Z}^G)$. Hence, the function $\phi$ takes finitely many values $t_1,\ldots,t_m$ on $S$. If we prove that each intersection of the form $S\cap \phi^{-1}(t_i)$ consists of finitely many orbits of the group $\rho^G(P_{\Z})$, then $S$ itself consists of finitely many $\rho^G(P_{\Z})$-orbits, and thus Inclusion \eqref{eq-anotherinclusion} is indeed of finite index, as wished.
 
Let $t = t_i > 0$. We claim that the larger intersection $V_{\Z}^G\cap \phi^{-1}(t)$ consists of finitely many $\rho^G(P_{\Z})$-orbits. To prove that, we want to apply \cite[Theorem 6.9]{BCH} to the representation $\rho^G:P\to \GL(V^G)$. We need to check the following things:
\begin{itemize}
\item that $P$ is a reductive subgroup of $\GL(V)$;
\item that the closed subset $\phi^{-1}(t)$ is an orbit of $\rho^G(P)$.
\end{itemize}

We check the reductivity first. We already know that $P$ is connected. Transposition with respect to the form $\mathrm{tr}$ defines an involution $\theta$ on $\GL(V)$. To prove that $P$ is reductive, it suffices to show that $\theta$ preserves $P$. Note that $\theta$ preserves the group $\Aut(\mathcal{A})$ (since $\mathcal{A}$ is self-dual with respect to $\mathrm{tr}$) and the group $G$ (since $\mathrm{tr}$ is $G$-invariant). Hence, $\theta$ preserves the centralizer subgroup $C_{\Aut(\mathcal{A})}(G)$. Since $\theta$ fixes the identity matrix, the identity component of $C_{\Aut(\mathcal{A})}(G)$ is preserved as well. An elementary computation shows that the representation $\rho^G: C_{\Aut(\mathcal{A})}(G)\to \GL(V^G)$ is $\theta$-equivariant, the induced involution on $\GL(V^G)$ being given by transposition with respect to $\mathrm{tr}|_{V^G}$. Since transposition preserves the determinant, it preserves the special linear group $\SL(V^G)$ thus the group $P$ is preserved by $\theta$.

Next, we prove that the afore-mentioned action of $\rho^G(P)$ on the closed level set $\phi^{-1}(t)$ is transitive. By Lemma \ref{lem-transitivecentralizer}, the centralizer group $C_{\Aut(\mathcal{A})}(G)$ acts transitively via $\rho^G$ on the open cone $\mathcal{A}^G$. So does its identity component. For any two elements $e,a\in \phi^{-1}(t)$, we can choose $h$ in the identity component of $C_{\Aut(\mathcal{A})}(G)$ such that
$\rho^G(h)(e) = a$. By \cite[Prop. I.3.1]{FK94}, we then have
$$t = \phi(a)  = |\det \rho^G(h)|\phi(e) = |\det \rho^G(h)| t,$$
so $\det \rho^G(h) = 1$, i.e., $h\in P$. So $\phi^{-1}(t)$ is indeed a closed orbit of the group $\rho^G(P)$.
So \cite[Theorem 6.9]{BCH} applies and proves our claim.
\end{proof}

\subsection{Descent of perfectly clipped cones and more}

Before proving a descent result for perfectly clipped cones, we have to address the apparition of an auxiliary self-dual homogeneous cone, {\it a priori} smaller than the invariant cone, in the descent result for well-clipped cones given by Proposition \ref{prop-descentwellclipped}. This is what the discussion of round and simplicial parts prepared us to, in particular the following lemma.

\begin{lemma}\label{lem-rdpartfunddom}
Let $\mathcal{C}$ be a well-clipped cone in a self-dual homogeneous cone $\mathcal{B}$ in $V=V_{\Z}\otimes \R$. Let $\mathcal{A}$ be a self-dual homogeneous cone in $V=V_{\Z}\otimes \R$ with the same round part as $\mathcal{B}$, and whose simplicial part rules the simplicial part of $\mathcal{B}$.
Then
$\Aut(\mathcal{C}^{\circ},\mathcal{A})$ is contained in
$\Aut(\mathcal{C}^{\circ},\mathcal{B})$. 
Moreover, $\Aut(\mathcal{C}^{\circ},\mathcal{A},V_{\Z})$ is of finite index in $\Aut(\mathcal{C}^{\circ},\mathcal{B},V_{\Z})$, and $\Aut(\mathcal{A},\mathcal{B},V_{\Z})$ is of finite index in $\Aut(\mathcal{B},V_{\Z})$.
\end{lemma}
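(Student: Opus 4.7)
The plan starts with a direct-sum decomposition of $\overline{\mathcal{C}}$ adapted to $\mathcal{B}$: by Assumption (i) of Definition \ref{def-nicelycutout}, each well-clipping hyperplane $H_i$ contains every half-line summand of $\mathcal{B}$, hence the entire simplicial span $W_{\mathcal{B}}$ of $\mathcal{B}$. Writing $\mathcal{R}=\rd\mathcal{A}=\rd\mathcal{B}$ and decomposing $V=\mathrm{span}(\mathcal{R})\oplus W_{\mathcal{B}}$, this yields
$$\overline{\mathcal{C}}=\overline{\mathcal{C}_{\mathrm{rd}}}\oplus \overline{\mathrm{Simp}(\mathcal{B})},\qquad\text{where}\quad\mathcal{C}_{\mathrm{rd}}^{\circ}:=\mathcal{R}\cap \bigcap_{i\in I}H_{i,+}.$$

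For the first claim, take $g\in\Aut(\mathcal{C}^{\circ},\mathcal{A})$; then $g$ also preserves $\overline{\mathcal{C}}$. Theorem \ref{thm-classification} applied to the $\R$-indecomposable decomposition of $\mathcal{A}$ shows that $g$ preserves $\mathcal{R}$ and thus $\mathrm{span}(\mathcal{R})$. Let $u_1,\ldots,u_r$ generate the extremal rays of $\mathrm{Simp}(\mathcal{B})$; by the direct-sum structure, each $\R_{\ge 0}u_i$ is also an extremal ray of $\overline{\mathcal{C}}$. Let $\pi:V\to V/\mathrm{span}(\mathcal{R})$ denote the quotient map and $\bar g$ the induced map; then $\bar g$ preserves $\pi(\overline{\mathcal{C}})=\pi(\mathrm{Simp}(\mathcal{B}))$, a simplicial cone whose extremal rays are the $\pi(u_i)$. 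Hence $\bar g$ permutes them, so $g(u_i)=\lambda_i u_{\sigma(i)}+r_i$ for some permutation $\sigma$, $\lambda_i>0$, and $r_i\in\mathrm{span}(\mathcal{R})$. The key observation is that extremal rays of a direct-sum cone lie in exactly one summand; since $g(u_i)$ is extremal in $\overline{\mathcal{C}}$ and $\pi(g(u_i))\ne 0$ implies it cannot lie in $\overline{\mathcal{C}_{\mathrm{rd}}}\subset\mathrm{span}(\mathcal{R})$, we must have $r_i=0$. So $g$ permutes the rays $\R_{\ge 0}u_i$, preserves $\mathrm{Simp}(\mathcal{B})$, and hence $\mathcal{B}=\mathcal{R}+\mathrm{Simp}(\mathcal{B})$.

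For the first finite-index statement, assuming the ruling hypothesis, I would observe that $\Aut(\mathcal{C}^{\circ},\mathcal{A},V_{\Z})$ is exactly the stabilizer of $\mathrm{Simp}(\mathcal{A})$ inside $\Aut(\mathcal{C}^{\circ},\mathcal{B},V_{\Z})$, since preserving $\mathrm{Simp}(\mathcal{A})$ together with $\mathcal{R}$ amounts to preserving $\mathcal{A}$. To bound the orbit of $\mathrm{Simp}(\mathcal{A})$, any $g\in\Aut(\mathcal{C}^{\circ},\mathcal{B},V_{\Z})$ permutes the extremal rays of $\mathrm{Simp}(\mathcal{B})$ by the previous paragraph, and separately their irrational-slope and rational-slope subsets since $g$ preserves $V_{\Q}$. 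By the ruling assumption, the irrational-slope extremal rays of $\mathrm{Simp}(\mathcal{B})$ are among the extremal rays of $\mathrm{Simp}(\mathcal{A})$, giving only finitely many permutations. Still by ruling, $W^{\R}_{\Q}:=\mathrm{span}(\text{rational-slope extremal rays of }\mathrm{Simp}(\mathcal{B}))$ is spanned by a subset of extremal rays of $\mathrm{Simp}(\mathcal{A})$, and $g$ preserves $W^{\R}_{\Q}$; the restriction $g|_{W^{\R}_{\Q}}$ is a lattice automorphism of the rational simplicial cone $\mathrm{Simp}(\mathcal{B})\cap W^{\R}_{\Q}$, hence lies in the finite group permuting primitive integral generators of its extremal rays. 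Combining, finitely many permutations and finitely many restrictions $g|_{W^{\R}_{\Q}}$ yield finitely many images $g(\mathrm{Simp}(\mathcal{A}))$.

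The second finite-index claim follows from the same orbit argument applied to $\Aut(\mathcal{B},V_{\Z})$, whose elements still permute the extremal rays of $\mathrm{Simp}(\mathcal{B})$ by Theorem \ref{thm-classification}, with stabilizer $\Aut(\mathcal{A},\mathcal{B},V_{\Z})$ of $\mathrm{Simp}(\mathcal{A})$. The delicate step is the extremal-ray analysis in the first claim: the subspaces $\mathrm{span}(\mathrm{Simp}(\mathcal{A}))$ and $W_{\mathcal{B}}$ need not coincide, and it is precisely the direct-summand property of the half-line generators of $\mathrm{Simp}(\mathcal{B})$ inside $\overline{\mathcal{C}}$ that cuts through this ambiguity.
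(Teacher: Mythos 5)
Your proposal is correct, and it takes a genuinely different route from the paper's proof, worth comparing.

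The paper exploits the product structure $\Aut(\mathcal{A}) = \Aut(\mathcal{R}) \times \Aut(\Sigma_k)$ and $\Aut(\mathcal{B}) = \Aut(\mathcal{R}) \times \Aut(\Xi_k)$ coming from uniqueness of the decomposition in Theorem~\ref{thm-classification}, and reduces everything to the inclusion $\Aut(\Xi_k,\Sigma_k) < \Aut(\Xi_k)$. For the finite-index part it passes to the identity component $\Aut^{\circ}(\Xi_k)$, which fixes each extremal ray of $\Xi_k$ individually and has index at most $k!$; intersecting with $\GL(V_{\Z})$ forces the identity on the rational rays, and the ruling hypothesis then shows $\Aut^{\circ}(\Xi_k)\cap\GL(V_{\Z})=\Aut^{\circ}(\Xi_k,\Sigma_k)\cap\GL(V_{\Z})$. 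Your argument is more hands-on: you pass to the quotient $\pi:V\to V/\mathrm{span}(\mathcal{R})$, use the fact that extremal rays of a direct-sum cone live in exactly one summand to show that any $g\in\Aut(\mathcal{C}^{\circ},\mathcal{A})$ permutes the extremal rays of $\mathrm{Simp}(\mathcal{B})$ on the nose, and then run an orbit–stabilizer argument on $\mathrm{Simp}(\mathcal{A})$, bounding the orbit via the finitely many permutations of irrational rays and the finitely many integral automorphisms of the rational simplicial cone $\mathrm{Simp}(\mathcal{B})\cap W^{\R}_{\Q}$. One genuine advantage of your route: the paper's proof opens by asserting that $\Sigma_k$ and $\Xi_k$ have ``the same linear span $S$'', which is not automatic for two self-dual homogeneous cones sharing a round part (one can shear the half-line summands into different complements of $R$); your extremal-ray argument for the first containment never uses this assertion, and for the finite-index claims the ruling hypothesis forces the spans to coincide anyway (equal dimension plus ruling gives containment of spans, hence equality), so your write-up is tighter on this point. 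The paper's route, in exchange, is shorter and makes the group-theoretic structure of the argument more transparent.
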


\begin{proof}
Denote by $\mathcal{R}$ the round part of both our self-dual homogeneous cones, by $R$ its linear span, by $k$ the codimension of $R$ in $V$. 
Note that 
$$\mathcal{A}=\mathcal{R}\oplus\Sigma_k
\quad\mbox{and}\quad
\mathcal{B}=\mathcal{R}\oplus\Xi_k$$
for simplicial cones $\Sigma_k$ and $\Xi_k$ of dimension $k$, with the same linear span $S$ in $V$. Since $\mathcal{C}$ is well-clipped in $\mathcal{B}$, we also have 
$\mathcal{C}= (\mathcal{C}\cap R) \oplus \Xi_k$.

Let us introduce a notation: For a not necessarily fully dimensional cone $\mathcal{H}$ in $V$ of linear span $H\subset V$, we denote by
$\Aut_H(\mathcal{H})$ the set of linear transformations of $H$ that preserve the cone $\mathcal{H}$.
By uniqueness of the decomposition in Theorem \ref{thm-classification}, we can decompose the groups
$$\Aut(\mathcal{A}) = \mathrm{Aut}_R(\mathcal{R}) \times 
\mathrm{Aut}_S(\Sigma_k)
\quad\mbox{and}\quad
\Aut(\mathcal{B}) = \mathrm{Aut}_R(\mathcal{R}) \times 
\mathrm{Aut}_S(\Xi_k)$$
with respect to the linear decomposition $V=R\oplus S$.
In particular, every linear automorphism of $V$ that preserves either of the cones $\mathcal{A}$ or $\mathcal{B}$ preserves the linear decomposition $V=R\oplus S$. Since $\mathcal{C}\cap S = \Xi_k$, we obtain the following descriptions
\begin{equation}\label{eq-expressions}
\begin{aligned}
\Aut(\mathcal{C}^{\circ},\mathcal{B}) 
& = \mathrm{Aut}_R(\mathcal{C}^{\circ}\cap R, \mathcal{R}) \times 
\mathrm{Aut}_S(\Xi_k),\\
\Aut(\mathcal{C}^{\circ},\mathcal{A}) 
& = \mathrm{Aut}_R(\mathcal{C}^{\circ}\cap R, \mathcal{R}) 
\times \mathrm{Aut}_S(\Xi_k,\Sigma_k),\\
\Aut(\mathcal{A},\mathcal{B}) 
& = \mathrm{Aut}_R(\mathcal{R}) \times 
\mathrm{Aut}_S(\Xi_k,\Sigma_k).
\end{aligned}
\end{equation}
The obvious inclusion 
$\Aut_S(\Xi_k, \Sigma_k)
< \Aut_S(\Xi_k)$ thus induces an inclusion
$$\Aut(\mathcal{C}^{\circ},\mathcal{A})
< \Aut(\mathcal{C}^{\circ},\mathcal{B}).$$

As the cones $\mathcal{A}$ and $\mathcal{B}$ are compatible with the rational structure on $V$, their decompositions into round and simplicial parts is defined over $\Q$. Hence, the intersections $S_{\Z}:=V_{\Z}\cap S$ and $R_{\Z}:=V_{\Z}\cap R$ remain lattices in $S$ and $R$ respectively, and the groups $\GL(V_{\Z})$ and $\GL(S_{\Z}\oplus R_{\Z})$ are commensurable.
Hence, we are left to show that the inclusions
\begin{align*}
\Aut(\mathcal{C}^{\circ},\mathcal{A},R_{\Z}\oplus S_{\Z}) 
& < \Aut(\mathcal{C}^{\circ},\mathcal{B},R_{\Z}\oplus S_{\Z}) ,\\
\Aut(\mathcal{A},\mathcal{B},R_{\Z}\oplus S_{\Z}) 
& < \Aut(\mathcal{B},R_{\Z}\oplus S_{\Z})
\end{align*}
are of finite index.
But, since $\Sigma_k$ rules $\Xi_k$, the inclusion
\begin{equation}
\Aut_S(\Xi_k,\Sigma_k,S_{\Z}) < \Aut_S(\Xi_k,S_{\Z}) 
\end{equation}
is of finite index.
We conclude by intersecting Identities \eqref{eq-expressions} and the description of $\Aut(\mathcal{B})$ with the group $\GL(R_{\Z}\oplus S_{\Z})$.
\end{proof}

The next proposition is the most crucial ingredient in the proof of Theorem \ref{thm-convexmain}. %%can be stated in many ways. We choose to state it in a form that is close to the way the proof proceeds.

\begin{proposition}\label{prop-descent}
Let $\mathcal{C}$ be a perfectly clipped cone in a self-dual homogeneous cone $\mathcal{A}$ in $V=V_{\Z}\otimes \R$. Let $G < \Aut(\mathcal{C}^{\circ},\mathcal{A},V_{\Z})$ be a subgroup acting with finite orbits. Then the invariant cone $\mathcal{C}^G$ is perfectly clipped in a self-dual homogeneous cone $\mathcal{B}$, and we have an inclusion of finite index 
$$\rho^G(C_{\Aut(\mathcal{C}^{\circ},\mathcal{A},V_{\Z})}(G)) 
\quad < \quad \Aut({\mathcal{C}^G}^{\circ},\mathcal{B},V_{\Z}^G).$$
\end{proposition}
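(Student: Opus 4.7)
The plan is to first apply Proposition \ref{prop-descentwellclipped} to obtain $\mathcal{C}^G$ as a well-clipped cone in a self-dual homogeneous cone $\mathcal{B} \subset V^G$ with reflection group $W_G = \langle \tau_i \mid i \in I^*\rangle \subset \rho^G(C_{\langle\sigma_i\rangle}(G))$, where $\mathcal{B}$ shares the round part of $\mathcal{A}^G$ and has its simplicial part ruled by that of $\mathcal{A}^G$. I would set $\Gamma_{\mathcal{C}^G} := \rho^G(C_{\Aut(\mathcal{C}^\circ, \mathcal{A}, V_{\Z})}(G))$ and check that this lies in $\Aut((\mathcal{C}^G)^\circ, \mathcal{B}, V_{\Z}^G)$: elements of $C_{\Aut(\mathcal{C}^\circ, \mathcal{A}, V_{\Z})}(G)$ permute the summands $\mathcal{A}_j$ while preserving the partition $J = \bigsqcup_d J_d$ (thanks to commutation with $G$) and permute the hyperplanes $(H_i)_{i\in I}$ accordingly, so their $\rho^G$-images preserve the decomposition of $\mathcal{B}$ supplied by the construction in Proposition \ref{prop-descentwellclipped}. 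The neatly-clipped data will be the pair $(\widetilde{W}, \Gamma_{\mathcal{C}^G})$ with $\widetilde{W} := \rho^G(C_{\langle\sigma_i\rangle}(G)) \cap \Aut(\mathcal{B}, V_{\Z}^G)$, which contains $W_G$ but may be strictly larger.

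The heart of the argument is the identity
$$\rho^G(C_{\Gamma_{\mathcal{A}}}(G)) \cap \Aut((\mathcal{C}^G)^\circ, \mathcal{A}^G, V_{\Z}^G) \;=\; \Gamma_{\mathcal{C}^G},$$
where $\Gamma_{\mathcal{A}} = \langle\sigma_i\rangle \rtimes \Aut(\mathcal{C}^\circ, \mathcal{A}, V_{\Z})$. To establish it, I decompose any element of the left-hand side as $\rho^G(wh)$ with $w \in C_{\langle\sigma_i\rangle}(G)$ and $h \in C_{\Aut(\mathcal{C}^\circ, \mathcal{A}, V_{\Z})}(G)$ via Lemma \ref{lem-normalizesemidirectprod}: since $\rho^G(h) \in \Gamma_{\mathcal{C}^G}$ already preserves $(\mathcal{C}^G)^\circ$, so must $\rho^G(w)$, and then for any $x \in (\mathcal{C}^G)^\circ \subset \mathcal{C}^\circ$, the point $w(x) = \rho^G(w)(x)$ lies in $w(\mathcal{C}^\circ) \cap \mathcal{C}^\circ$, forcing $w = 1$ by the disjoint-interior property of Corollary \ref{cor-Cfunddom}. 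Combining this identity with perfect clipping (so $\Gamma_{\mathcal{A}}$ has finite index in $\Aut(\mathcal{A}, V_{\Z})$), Lemma \ref{lem-normalize-commens} (commensurability of centralizers), Proposition \ref{prop-centralizer-vs-autinv} (the image of the centralizer has finite index in $\Aut(\mathcal{A}^G, V_{\Z}^G)$), and finally Lemma \ref{lem-rdpartfunddom} to translate groups acting on $\mathcal{A}^G$ into groups acting on $\mathcal{B}$, one deduces the finite-index claim $[\Aut((\mathcal{C}^G)^\circ, \mathcal{B}, V_{\Z}^G) : \Gamma_{\mathcal{C}^G}] < \infty$ stated in the proposition.

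Finally, to verify the four conditions of Definition \ref{def-neatlyclipped} with the data $(\widetilde{W}, \Gamma_{\mathcal{C}^G})$: condition (i) follows from the same disjoint-interior argument, which forces any $\rho^G(w) \in \widetilde{W}$ preserving $(\mathcal{C}^G)^\circ$ to be trivial; (ii) holds because $\Gamma_{\mathcal{C}^G}$ normalizes $\rho^G(C_{\langle\sigma_i\rangle}(G))$ (via the semidirect structure of $\Gamma_{\mathcal{A}}$ and preservation under $\rho^G$) and trivially normalizes $\Aut(\mathcal{B}, V_{\Z}^G)$; (iv) is immediate since $W_G \subset \widetilde{W}$. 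For condition (iii), the same semidirect decomposition yields $\widetilde{W} \rtimes \Gamma_{\mathcal{C}^G} = \rho^G(C_{\Gamma_{\mathcal{A}}}(G)) \cap \Aut(\mathcal{B}, V_{\Z}^G)$ (the semidirect coming from trivial intersection via (i)), and this is of finite index in $\Aut(\mathcal{B}, V_{\Z}^G)$ by intersecting the finite-index inclusion $\rho^G(C_{\Gamma_{\mathcal{A}}}(G)) \subset \Aut(\mathcal{A}^G, V_{\Z}^G)$ with $\Aut(\mathcal{B}, V_{\Z}^G)$ and applying Lemma \ref{lem-rdpartfunddom}. The most delicate step I expect is the bookkeeping around the two distinct cones $\mathcal{A}^G \supset \mathcal{B}$ and their respective automorphism groups, which is precisely where the ``same round part / ruling simplicial part'' output of Proposition \ref{prop-descentwellclipped} combines with Lemma \ref{lem-rdpartfunddom} to absorb the discrepancy.
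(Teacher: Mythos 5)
Your proposal is correct and follows essentially the same route as the paper: the same choices of $W$ and $\Gamma_{\mathcal{C}^G}$, and the same chain of Proposition \ref{prop-descentwellclipped}, Lemma \ref{lem-normalizesemidirectprod}, Lemma \ref{lem-normalize-commens}, Proposition \ref{prop-centralizer-vs-autinv}, and Lemma \ref{lem-rdpartfunddom}. The only (harmless) divergence is at the very end: the paper deduces the finite-index statement by invoking Point (3) of Lemma \ref{lem-AMRTstyle} once neat clipping is established, whereas you obtain it directly from the identity $\rho^G(C_{\Gamma_{\mathcal{A}}}(G)) \cap \Aut(({\mathcal{C}^G})^{\circ},\mathcal{A}^G,V_{\Z}^G) = \Gamma_{\mathcal{C}^G}$ (proved via the disjoint-interior property of Corollary \ref{cor-Cfunddom}) together with finite-index bookkeeping, which also makes explicit the ``elementary verification'' of Assumption (i) of Definition \ref{def-neatlyclipped} that the paper leaves to the reader.
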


\begin{proof}
Let $\Gamma$ denote the arithmetic subgroup given by Definition \ref{def-neatlyclipped}, which writes
$$\Gamma = \langle \sigma_i \mid i\in I\rangle \rtimes \Aut(\mathcal{C}^{\circ},\mathcal{A},V_{\Z}),$$
and is of finite index in $\Aut(\mathcal{A},V_{\Z})$.
By Lemma \ref{lem-normalizesemidirectprod}, we have
\begin{equation}\label{eq-centralize-equality}
C_{\Gamma}(G) = C_{\langle \sigma_i \mid i\in I\rangle}(G)\rtimes C_{\Aut(\mathcal{C}^{\circ},\mathcal{A},V_{\Z})}(G).
\end{equation}
By Lemma \ref{lem-normalize-commens} and Proposition \ref{prop-centralizer-vs-autinv}, we derive an inclusion of finite index
\begin{equation}\label{eq-finindex-pfmain}
\rho^G(C_{\langle \sigma_i \mid i\in I\rangle}(G)) 
\rtimes \rho^G(C_{\Aut(\mathcal{C}^{\circ},\mathcal{A},V_{\Z})}(G))
\quad < \quad
\Aut(\mathcal{A}^G,V_{\Z}^G).
\end{equation}
%This already shows that the intermediate inclusion
%%$$\rho^G(C_{\Aut(\mathcal{C}^{\circ},\mathcal{A},V_{\Z})}(G))
%%\quad < \quad
%%\Aut({\mathcal{C}^G}^{\circ},\mathcal{A}^G,V_{\Z}^G)$$
%%is of finite index. \textcolor{red}{explain more} 

\medskip

By Item (1) of Proposition \ref{prop-descentwellclipped}, the cone $\mathcal{C}^G$ is well-clipped in a self-dual homogeneous cone $\mathcal{B}$ which has the same round part as $\mathcal{A}^G$, and whose simplicial part is ruled by the simplicial part of $\mathcal{A}^G$. Also note that
$\rho^G(C_{\Aut(\mathcal{C}^{\circ},\mathcal{A},V_{\Z})}(G))$ is contained in $\Aut({\mathcal{C}^G}^{\circ},\mathcal{A}^G)$, thus preserves the cone $\mathcal{B}$ by Lemma \ref{lem-rdpartfunddom}.
In particular, by Lemma \ref{lem-rdpartfunddom} and Inclusion \eqref{eq-finindex-pfmain},
there is an inclusion of finite index
\begin{equation}\label{eq-arithemticinB-pfmain}
W
\rtimes \rho^G(C_{\Aut(\mathcal{C}^{\circ},\mathcal{A},V_{\Z})}(G))
\quad < \quad
\Aut(\mathcal{B},V_{\Z}^G),
\end{equation}
where $W$ denotes $\rho^G(C_{\langle \sigma_i \mid i\in I\rangle}(G)) \cap \Aut(\mathcal{B})$.
By Item (2) of Proposition \ref{prop-descentwellclipped}, we know that the orthogonal reflection group given by the hyperplane sides of $\mathcal{C}^G$ satisfies
$\langle \tau_i\mid i\in I^*\rangle 
<
W.$
These facts and an elementary verification of Assumption (i) of Definition \ref{def-neatlyclipped} show that $\mathcal{C}^G$ is in fact neatly clipped in $\mathcal{B}$. By Proposition \ref{prop-characterize-RPFD}, the cone $\mathcal{C}^G$ is also perfectly clipped in $\mathcal{B}$.

\medskip

Therefore, we can apply Lemma \ref{lem-AMRTstyle}. It provides is a rational polyhedral fundamental domain for the action of $\Aut({\mathcal{C}^G}^{\circ},\mathcal{B},V_{\Z}^G)$ on $(\mathcal{C}^G)^+$, and shows that the inclusion
$$\rho^G(C_{\Aut(\mathcal{C}^{\circ},\mathcal{A},V_{\Z})}(G))
\quad <\quad
\Aut({\mathcal{C}^G}^{\circ},\mathcal{B},V_{\Z}^G)
$$
is of finite index, as wished.
\end{proof}

\begin{comment}
We deduce the following somewhat more memorable statement.

\begin{corollary}\label{cor-descentperfclipped}
Let $\mathcal{C}$ be a perfectly clipped cone in a self-dual homogeneous cone $\mathcal{A}$ in $V=V_{\Z}\otimes \R$. Let $G < \Aut(\mathcal{C}^{\circ},\mathcal{A},V_{\Z})$ be a subgroup acting with finite orbits. Then the invariant cone $\mathcal{C}^G$ is perfectly clipped in a cone $\mathcal{B}$, and we have an inclusion of finite index 
$$\rho^G(C_{\Aut(\mathcal{C}^{\circ},\mathcal{A},V_{\Z})}(G)) 
\quad < \quad \Aut({\mathcal{C}^G}^{\circ},\mathcal{B},V_{\Z}^G).$$
\end{corollary}

\begin{proof}
It follows from Propositions \ref{prop-descent} and \ref{prop-characterize-RPFD}.
\end{proof}
\end{comment}

\section{Proof of the main results}

We can now prove the theorems stated in the introduction.

\begin{proof}[Proof of Theorem \ref{thm-convexmain}]
The fact that well-clipped cones descend to well-clipped cones is Proposition \ref{prop-descentwellclipped}. 
Assume that there exists a rational polyhedral fundamental domain for the action of  $\Gamma := \Aut(\mathcal{C}^{\circ},\mathcal{A},V_{\Z})$ on the well-clipped cone $\mathcal{C}$. Then by Proposition \ref{prop-characterize-RPFD}, the cone $\mathcal{C}$ is perfectly clipped. Applying Proposition \ref{prop-descent}, \cite[Proposition 4.6]{Loo14}, and Proposition \ref{prop-characterize-RPFD}, we conclude that the cone ${\mathcal{C}^G}^+$ admits a rational polyhedral fundamental domain for the action of $\rho^G(C_{\Gamma}(G))$.
\end{proof}

We will prove a slight generalization of Theorem \ref{thm-main}.

\begin{theorem}\label{thm-maintechnical}
Let $(X,\Delta)$ be a pair defined over a perfect field $k$. Let $(\overline{X},\overline{\Delta})$ denote their base change to the algebraic closure $\overline{k}$ of $k$. 
\begin{enumerate}
\item Suppose that the movable cone $\Mov(\overline{X})$ is well-clipped inside a self-dual homogeneous $\PsAut(\overline{X},\overline{\Delta})$-invariant cone $\mathcal{A}$. Suppose also that the pair $(\overline{X},\overline{\Delta})$ satisfies the movable cone conjecture. Then the movable cone $\Mov(\overline{X})$ is perfectly clipped in $\mathcal{A}$ 
and the inclusion of groups 
$$\PsAut^*(\overline{X},\overline{\Delta}) < \Aut(\Mov(\overline{X})^{\circ}, \mathcal{A}, N^1(\overline{X})_{\Z\mathrm{-Weil}})$$
is of finite index.
\item Suppose that there exist a perfectly clipped $\PsAut(\overline{X},\overline{\Delta})$-invariant cone $\mathcal{C}$ inside a self-dual homogeneous $\PsAut(\overline{X},\overline{\Delta})$-invariant cone $\mathcal{A}$ in $N^1(\overline{X})$ such that 
\begin{itemize}
\item The closed movable cone $\overline{\Mov}(\overline{X})$ writes as the intersection of the closed cone $\overline{\mathcal{C}}$ with a finite collection of halfspaces that is invariant under both $\PsAut(\overline{X},\overline{\Delta})$ and the Galois group of $k$.
\item The group $\PsAut^*(\overline{X},\overline{\Delta})$ is of finite index of $\Aut(\mathcal{C}^{\circ}, \mathcal{A}, N^1(\overline{X})_{\Z\mathrm{-Weil}})$.
\end{itemize}
Then the movable cone conjecture holds for every quotient pair $(X/G,\Delta_G)$ of $(X,\Delta)$ by a finite subgroup $G$ of $\Aut_k(X,\Delta)$. 
\end{enumerate}
\end{theorem}

\begin{proof}
Let $G$ be a finite subgroup of $\Aut(X,\Delta)$ and let $\overline{G}$ be the group $G\times {\rm Gal}(\overline{k}/k)$ acting on the base change to the algebraic closure $(\overline{X},\overline{\Delta})$. Note that the induced action of $\overline{G}$ on the N\'eron--Severi space $N^1(\overline{X})$ has finite orbits by \cite[Paragraphs 16.1, 16.2]{Lamy}. Denoting by $p:\overline{X}\to X/G$ the quotient map by the action of $\overline{G}$, note that $p^*$ is injective and
$$
N^1(\overline{X})^{\overline{G}} = p^*N^1(X/G), \quad
\Mov(\overline{X}) \cap N^1(\overline{X})^{\overline{G}} = \quad p^*\Mov(X/G) ,
$$
$$\rho^{\overline{G}}(C_{\PsAut^*(\overline{X},\overline{\Delta})}({\overline{G}})) < \quad \PsAut^*(X/G,\Delta_G),$$
where $(X/G,\Delta_G)$ is the quotient pair of $(X,\Delta)$ by $G$ and $\rho^{\overline{G}}$ is obtained by corestricting  the representation of the centralizer
$$\rho\colon C_{\PsAut^*(\overline{X},\overline{\Delta})}({\overline{G}}) \to \GL(N^1(\overline{X}))$$
to the invariant subspace $N^1(\overline{X})^{\overline{G}}$. See Definition \ref{def-rhoG}.

We first prove (1). Recall that $\PsAut^*(\overline{X},\overline{\Delta})$ is contained in 
$$\Gamma := \Aut(\Mov(\overline{X})^{\circ},\mathcal{A},N^1(\overline{X})_{\Z-{\rm Weil}}),$$ where $\mathcal{A}$ denotes the self-dual homogeneous cone given by assumption.
Since $(\overline{X},\overline{\Delta})$ satisfies the movable cone conjecture, there is a rational polyhedral fundamental domain for the action of $\PsAut^*(\overline{X},\overline{\Delta})$ --- and a fortiori for the action of the larger group $\Gamma$ by \cite[Proposition 4.1, Application 4.14]{Loo14} --- on $\Mov^+(\overline{X})$. This implies two things: First, by \cite[Proposition 4.6]{Loo14}, the group $\PsAut^*(\overline{X},\overline{\Delta})$ is of finite index in $\Gamma$. Second, by Proposition \ref{prop-characterize-RPFD}, the cone $\Mov(X)$ is perfectly clipped in $\mathcal{A}$.

We now prove (2). Let us set $\mathcal{C}$ and $\mathcal{A}$ to be the perfectly clipped and the self-dual homogeneous cones given by assumption. By assumption, we can write
\begin{equation}\label{eq-firstinproof}
\overline{\Mov}(\overline{X}) = \overline{\mathcal{C}}\cap\bigcap_{i=1}^m W_i
\end{equation}
for a finite collection $(W_i)$ of rational halfspaces that is invariant under $\PsAut(\overline{X},\overline{\Delta})$ and $\mathrm{Gal}(\overline{k}/k)$. By relabelling, we enforce that the halfspaces $W_i$ that do not contain the invariant subspace $N^1(\overline{X})^{\overline{G}}$ are precisely those indexed by $1\le i\le r$: Then for $1\le i\le r$, the subset $W_i^{\overline{G}}$ is a rational halfspace in $N^1(\overline{X})^{\overline{G}}$, and this collection of halfspaces is clearly invariant under the action of the centralizer $\rho^{\overline{G}}(C_{\PsAut^*(\overline{X},\overline{\Delta})}({\overline{G}}))$.
From Identity \eqref{eq-firstinproof}, we derive the identity
\begin{equation}\label{eq-inproof}
\overline{\Mov}(X/G) = \overline{\mathcal{C}}^{\overline{G}}\cap\bigcap_{i=1}^r W_i^{\overline{G}}.
\end{equation}
To prove the movable cone conjecture for the pair $(X/G,\Delta_G)$, it suffices to show that the smaller group $\rho^{\overline{G}}(C_{\PsAut^*(\overline{X},\overline{\Delta})}({\overline{G}}))$ acts on the cone $\Mov^+(X/G)$ with a rational polyhedral fundamental domain. By Lemma \ref{lem-convexfurtherslice}, we only have to prove that the same centralizer group acts on the cone $(\mathcal{C}^{\overline{G}})^+$ with a rational polyhedral fundamental domain.

We now denote
$$\Gamma := \Aut(\mathcal{C}^{\circ},\mathcal{A},N^1(\overline{X})_{\Z{\rm -Weil}}).$$
Since $\mathcal{C}$ is perfectly clipped in $\mathcal{A}$, by Proposition \ref{prop-characterize-RPFD}, there is a rational polyhedral fundamental domain for the action of $\Gamma$ on $\mathcal{C}^+$. By Theorem \ref{thm-convexmain}, there is a rational polyhedral fundamental domain for the action of $\rho^G(C_{\Gamma}(G))$ on ${\mathcal{C}^G}^+$ as well. Since by assumption, the group $\PsAut^*(\overline{X},\overline{\Delta})$ is of finite index in $\Gamma$, we deduce from Lemma \ref{lem-normalize-commens} and \cite[Proposition 4.6]{Loo14} that the cone ${\mathcal{C}^{\overline{G}}}^+$ also admits a rational polyhedral fundamental domain for the action of $\rho^{\overline{G}}(C_{\PsAut^*(\overline{X},\overline{\Delta})}({\overline{G}}))$, as wished.
\end{proof}

\begin{remark}
In principle, one could use that $\PsAut^*(X/G,\Delta_G)$ contains the larger normalizer subgroup $\rho^G(N_{\PsAut^*(\overline{X},\overline{\Delta})}(G))$. However, working with the centralizer subgroup is somewhat simpler.
\end{remark}

\begin{proof}[Proof of Theorem \ref{thm-main}]
By Theorem \ref{thm-maintechnical} (1), we may set $\mathcal{C}:= \Mov(\overline{X})$ and be in the situation of Theorem \ref{thm-maintechnical} (2). Applying Theorem \ref{thm-maintechnical} (2) now concludes.
\end{proof}

\begin{proof}[Proof of Corollary \ref{cor-boundedness}]
Let $X$ be a torsor over an abelian variety over a perfect field $k$. The base change $\overline{X}$ to the algebraic closure of $k$ is an abelian variety, thus by the work of Prendergast--Smith in \cite{PS12a} has a self-dual homogeneous movable cone and satisfies the movable cone conjecture. Theorem \ref{thm-main} concludes.
\end{proof}

\begin{proof}[Proof of Theorem \ref{thm-ccforexplicitvars}]
Recall that $X = A \times Y \times \prod_{j=1}^s S_j,$
where $A$ is an abelian variety, $Y$ is a product of primitive symplectic varieties with canonical singularities, and each $S_j$ is a smooth rational surface underlying a klt Calabi--Yau pair $(S_j,\Delta_j)$. Let $G$ be a finite subgroup of $\Aut(X,\Delta)$. By \cite[Lemma 4.6]{Druel} the $G$-action preserves the decomposition, up to permutation of potential isomorphic factors.

Regrouping isomorphic factors, we rewrite 
$$Y = \prod_{k=1}^m {Y_k}^{n_k}.$$
By \cite[Corollary 4.4.4]{Pro21}, each $Y_k$ admits a $G\cap\Aut(Y_k)^{n_k}$-equivariant $\Q$-factorial terminalization $\hat{Y_k}$. In particular, 
$$\hat{Y}= \prod_{k=1}^m {\hat{Y_k}}^{n_k}$$ is a $G$-equivariant $\Q$-factorial terminalization of $Y$. We define $\hat{X}:= A\times\hat{Y}\times \prod_{j=1}^s S_j.$
By Lemmas \ref{lem-descent} and \ref{lem-descentbis}, it suffices to show the movable cone conjecture, the finiteness of SQMs, and the nef cone conjecture for all SQMs of the pair $(\hat{X}/G,\hat{\Delta}_G)$. 

To simplify notations, we write $X$ for $\hat{X}$ and $Y$ for $\hat{Y}$ from here on. All primitive symplectic varieties involved have terminal $\Q$-factorial singularities, and $b_2\ge 5$ or dimension 2.
We will now prove the movable cone conjecture for any quotient pair of $(X,\Delta)$ using Theorem \ref{thm-maintechnical} (2).

We first construct the cones $\mathcal{C}$ and $\mathcal{A}$. Denoting by $q_i$ the Beauville--Bogomolov--Fujiki quadratic form for the primitive symplectic variety $Y_i$ and by $f_j$ the intersection form for the surface $S_j$, we set
$$\mathcal{A} := \Mov^{\circ}(A)\oplus \bigoplus_{i=1}^r \mathcal{H}(q_i)\oplus \bigoplus_{j=1}^s \mathcal{H}(f_j).$$
It is a direct sum of self-dual homogeneous cones, thus is self-dual homogeneous itself; see Example \ref{ex-list} Item (7) and Theorem  \ref{thm-koechervinberg}. 
 We now set
 $$\mathcal{C} := \Mov^{\circ}(A)\oplus \bigoplus_{i=1}^r \Mov^{\circ}(Y_i)\oplus \bigoplus_{j=1}^s (\overline{\mathrm{NE}}(S_j)_{K_{S_j}\le 0})^{\vee}.$$
 We checked in Example \ref{ex-list}, see in particular Items (1), (4), (6), (7), and in Lemmas \ref{lem-directsum} and \ref{lem-surface-nicely}, that the cone $\mathcal{C}$ is well-clipped in $\mathcal{A}$.
By Corollary \ref{cor-psautpreserves}, both cones $\mathcal{C}$ and $\mathcal{A}$ are preserved by the action of $\PsAut(X,\Delta)$. 

The known cases of the movable cone conjecture \cite{To10,PS12a,LMP24}, Lemma \ref{lem-surfperfectly} for the surfaces, Proposition \ref{prop-characterize-RPFD}, and Example \ref{ex-listperfclipped} Item (10) show two things: First, that the cone $\mathcal{C}$ is in fact perfectly clipped in $\mathcal{A}$. Second, using also \cite[Proposition 4.6]{Loo14}, that the group
$$\Aut^*(A)\times \prod_{i=1}^r \PsAut^*(Y_i)\times \prod_{j=1}^s \Aut^*(S_j,\Delta_j)$$
is of finite index in the group $\Aut(\mathcal{C}^{\circ},\mathcal{A},N^1(X)_{\Z\mbox{\footnotesize{-Weil}}})$. A fortiori, the larger group $\PsAut^*(X,\Delta)$ is of finite index in $\Aut(\mathcal{C}^{\circ},\mathcal{A},N^1(X)_{\Z\mbox{\footnotesize{-Weil}}})$ too.

We finally relate the movable cone of $X$ to the cone $\mathcal{C}$. Lemma \ref{lem-mov-product} and \cite[Exercise III.12.6]{HarBook} show that the movable cone of $X$ is the direct sum of the movable cones of its factors. Regrouping isomorphic factors, we rewrite 
$$\prod_{j=1}^s S_j = \prod_{k=1}^m {S_k}^{n_k}.$$ 
Lemma \ref{lem-surface-nicely} allows us to describe the movable cone of $X$ as follows:
$$\overline{\Mov}(X) = \overline{\mathcal{C}}\cap \bigcap_{k=1}^m \bigcap_{d=1}^{n_k} \mathrm{pr}_{d,S_k}^*\bigcap_{u\in U_k} W_u,$$
where $(W_u)_{u\in U_k}$ denotes the finite collection of rational halfspaces given by Lemma \ref{lem-surface-nicely} for the surface $S_k$ and $\mathrm{pr}_{d,S_k}$ denotes the projection from $X$ to the $d$-th factor isomorphic to $S_k$. In $N^1(X)$, the collection of halfspaces of the form $\mathrm{pr}_{d,S_k}^*W_u$, indexed by $1\le k\le m$, $1\le  d\le n_k$ and $u\in U_k$, is in fact invariant under the action of the group $\PsAut(X)$ by Corollary \ref{cor-psautpreserves}.

We have verified the assumptions of Theorem \ref{thm-maintechnical} (2), which establishes the movable cone conjecture for any quotient pair of $(X,\Delta)$.

\medskip

We now focus on the finiteness of SQMs and the nef cone conjecture for them. Note that
$$\mathrm{Mov}^{\circ}(X) \setminus \bigcup_{w\in W} w^{\perp} = \bigsqcup_{\substack{\alpha: X\dashrightarrow X'\\ \mathrm{SQM}}}\alpha^*\mathrm{Amp}(X'),$$
where $W$ is the set of pullbacks of primitive wall divisors of the primitive symplectic terminal $\Q$-factorial factors (see \cite[Definition 7.1]{LMP24}). By \cite[Proposition 7.7]{LMP24}, the set of squares $\{q(w)\mid w\in W\}$ is bounded and contained in $\Z_{< 0}$. 
Restricting to the $G$-invariant Néron-Severi subspace, we still have
$$\mathrm{Mov}^{\circ}(X/G) \setminus \bigcup_{w\in W_G} w^{\perp} = \bigsqcup_{\substack{\alpha: X/G\dashrightarrow Y\\ \mathrm{SQM}}}\alpha^*\mathrm{Amp}(Y),$$
where we define 
$$W_G
:=
\left\lbrace\sum_{g\in G} gw \mid w\in W,\, q(w,\sum_{g\in G} gw) < 0\right\rbrace.$$
The elements of $W_G$ clearly remain of bounded negative squares, thus by \cite[Proposition 3.4]{MY15}, for any rational polyhedral cone $\Pi$, only finitely many of their orthogonal hyperplanes intersect $\Pi^{\circ}$. This checks the assumption of Lemma \ref{lem-finitesqm}, and applying it to the pair $(X/G,\Delta_G)$ concludes the proof.
\end{proof}

\section{Examples}\label{sec-examples}
We conclude with a few examples of varieties to which Theorem \ref{thm-ccforexplicitvars} applies.% and for which the movable cone conjecture was not previously known.

\medskip

Our first example features a rational surface $\Sigma$ underlying a Calabi--Yau pair. The cone conjecture is already known for this pair by the general results of Totaro \cite{To10}, so this result is not novel. We still include it as one of the simplest illustration of Theorem \ref{thm-ccforexplicitvars} in the presence of a group action with ramification in codimension $1$.

\begin{example}
Let $\left(S,\frac{1}{3}(C_1+\ldots+C_9)\right)$ be the klt Calabi--Yau pair constructed in Example \ref{ex-rationalsurface}. Recall that it is obtained as the minimal resolution of the quotient $S_0=E_j\times E_j/\langle\mathrm{diag}(j,j)\rangle$, where $j$ denotes the first third root of unity. 
The action of the matrix $\mathrm{diag}(1,j)$ on the product $E_j\times E_j$ descends to the quotient $S_0$ and lifts to an automorphism $g_3$ of the minimal resolution $S$. Let $T_j$ denote the set of fixed points of the multiplication by $j$ on $E_j$ and consider the six rational curves obtained by taking the image of
$$E_j\times T_j \;\cup\; T_j\times E_j$$
in the quotient $S_0$. Their strict transform consists of six rational curves $F_1,\ldots,F_6$ on $S$, which are pointwise fixed by $g_3$. In fact, this is precisely the ramification locus of the quotient map 
$$p\colon S\to S/\langle g_3\rangle.$$ 
Let $\Sigma$ denote the quotient $S/\langle g_3\rangle$ and consider the quotient pair $(\Sigma,\Delta)$ with
$$\Delta= \frac{1}{3}\left(p(C_1)+\ldots+p(C_9)\right) + \frac{2}{3}\left(p(F_1)+\ldots+p(F_6)\right).$$
The cone conjecture holds for $(\Sigma,\Delta)$ by \cite{To10}.

We can use the proofs of Theorems \ref{thm-ccforexplicitvars} and \ref{thm-maintechnical} to say slightly more: the cone $\Nef(\Sigma)$ is rational polyhedral. Indeed, we prove that the action of the centralizer subgroup
$C_{\Aut(S)}(\langle g_3\rangle)$ on the cone $\Nef(\Sigma)$ admits a rational polyhedral fundamental domain.
Note however that 
\begin{align*}
C_{\Aut(S)}(\langle g_3\rangle)
&\cong C_{\Aut(S_0)}(\langle\mathrm{diag}(1,j)\rangle)\\
&\cong C_{\Aut(E_j\times E_j)}(\langle\mathrm{diag}(j,j),\mathrm{diag}(1,j)\rangle)\\
&\cong (T_j \times T_j) \rtimes C_{\GL_2(\Z[j])}(\langle\mathrm{diag}(1,j)\rangle)\\
&\simeq (T_j \times T_j) \rtimes (\Z[j]^{\times}\times \Z[j]^{\times})
\end{align*}
where $T_j\simeq \Z/3\Z$ is the set of fixed points of the multiplication by $j$ on $E_j$ as above and $\Z[j]^{\times}\simeq \Z/6\Z$ is the group of units in the ring $\Z[j]$. This group is finite, so the cone $\Nef(\Sigma)$ is rational polyhedral.
\end{example}

The next example is due to Oguiso (\cite[Example 2]{Ogu96}). The cone conjecture was not previously known in this case: we derive it from Theorem \ref{thm-ccforexplicitvars}. However, we can also describe the nef and movable cones by hand. In our opinion, Theorem \ref{thm-ccforexplicitvars} is not strictly essential here; it just allows to skip some tedious computations when finding a covering rational polyhedral domain, after the nef cone and the automorphism group are described.  

\begin{example}\label{ex-jac}
Let $C$ denote the Klein quartic curve, that is the vanishing locus of the equation $x^3y+y^3z+z^3x$ in $\mathbb{P}^2$. Denote by $J(C)$ its Jacobian, which is an abelian threefold. The automorphism of $C$ obtained by restricting
$$g_7\colon [x:y:z]\in\P^2 \mapsto [\zeta_7x:{\zeta_7}^5y:z]\in\P^2$$
induces an automorphism of $J(C)$, which we denote by $h_7$. It has order $7$ and fixes exactly $7$ points of $J(C)$. 

We consider the quotient $\overline{X_7}=J(C)/\langle h_7\rangle$. It is a Calabi--Yau threefold with canonical singularities, it has Picard number $3$ (\cite[Claim 3.2 and Key Claim (2)]{Ogu96}), and it admits a unique crepant resolution $X_7$, which is a rigid Calabi--Yau threefold of Picard number $24$ (\cite[Example 2]{Ogu96}.) 

While Theorem \ref{thm-ccforexplicitvars} proves the movable and nef cone conjectures for $\overline{X_7}$, the cone conjecture remains open for $X_7$. Alternatively, one can explicitly compute the nef and movable cones, as well as the automorphism group of $\overline{X_7}$. These cones and groups are simple enough to describe that one could write down an explicit rational polyhedral cone $\Pi\subset\Nef^+(\overline{X_7})$ whose $\mathrm{Aut}(\overline{X_7})$-translates cover the cone $\Nef^+(\overline{X_7})$, thereby foregoing the use of Theorem \ref{thm-ccforexplicitvars}. To make this point clear, we will now describe the nef and movable cones of $\overline{X_7}$, as well as a finite index subgroup of $\mathrm{Aut}(\overline{X_7})$.

\medskip

Choosing a fixed point of $h_7$ as the origin of $J(C)$ allows us to view $h_7$ as a linear automorphism of the tangent space to $J(C)$, that is the dual space to $H^0(K_C)$. Using the Poincaré residue formula, an elementary computation shows that
%$$
%\omega_1 = \frac{xz\de y - xy\de z}{3x^2y + z^3}\bigg|_C
%,\quad
%\omega_2 = \frac{z^2\de y - yz\de z}{3x^2y + z^3}\bigg|_C
%,\quad
%\omega_3 = \frac{yz\de y - y^2\de z}{3x^2y + z^3}\bigg|_C$$
%forms a basis of $H^0(K_C)$, on which ${g_7}^*$ acts with respective eigenvalues ${\zeta_7}^6,{\zeta_7}^5,{\zeta_7}^3$. We deduce that 
$h_7$ acts with the diagonal matrix $\mathrm{diag}(\zeta_7,{\zeta_7}^2,{\zeta_7}^4)$ on the tangent space of $J(C)$. One can lift this action to the universal cover $\C^3$ of $J(C)$, where the eigenspace decomposition gives rise to a natural choice of three linear coordinates $z_1,z_2,z_3$ on $\C^3$. The three closed $(1,1)$-forms
$$i\de z_1\wedge\de \overline{z_1}
,\quad
i\de z_2\wedge\de \overline{z_2}
,\quad
i\de z_3\wedge\de \overline{z_3}$$
are clearly invariant under translation and under the action of $\mathrm{diag}(\zeta_7,{\zeta_7}^2,{\zeta_7}^4)$, thus descend to the quotient $\overline{X_7}$, where we denote their cohomology classes by $D_1,D_2,D_3$. These are clearly nef; see for instance \cite[Lemma 1.5]{DELV}. They generate the N\'eron--Severi space of $\overline{X_7}$ by \cite[Claim 3.2 and Key Claim (2)]{Ogu96}.
Since we have $(D_i)^2 \equiv 0$ in $H^4(\overline{X_7};\Q)$ for $i=1,2,3$, the nef and pseudoeffective cones of $\overline{X_7}$ coincide and are spanned by the classes $D_1 ,D_2, D_3.$ In particular, the nef and movable cones of $\overline{X_7}$ also coincide. They are both clearly polyhedral.

We however claim that there is no non-zero rational point on the boundary of $\Nef(\overline{X_7})$. Before proving that claim, we describe the automorphism group of $\overline{X_7}$. The endomorphisms
$$\id + h_7,\quad \id+{h_7}^2,\quad \id+{h_7}^4$$
of the abelian variety $J(C)$ act with determinant
$(1+\zeta_7)(1+{\zeta_7}^2)(1+{\zeta_7}^4) = 1,$ and are thus automorphisms of $J(C)$. Since they commute with $h_7$ and with each other, they descend to automorphisms of $\overline{X_7}$, and generate a free abelian group of rank $3$ in $\Aut(\overline{X_7})$. It is easy to show that this inclusion of groups has finite index.

Note that the three classes $D_1,D_2,D_3$ are eigenvectors of $\id + {h_7}$, with eigenvalues 
$$|1+\zeta_7|^2, \quad |1+{\zeta_7}^2|^2,\quad |1+{\zeta_7}^4|^2.$$
Since these eigenvalues are distinct from $\pm 1$ and since $\id + h_7$ preserves the ray spanned by $D_i$ as well as the lattice of integral Weil divisor classes, there is no integral Weil divisor class on the ray spanned by $D_i$. This shows that $D_i$ spans an irrational ray for $i=1,2,3$.
More generally, the face of the nef cone spanned by $D_i$ and $D_j$, for $i\neq j$, satisfies one of the following:
\begin{itemize}
\item[a)] it contains no non-zero rational point, 
\item[b)] or it is a rational $2$-plane,
\item[c)] or it contains exactly one rational line.
\end{itemize}
If Option c) occurs, then $\id +h_7$ preserves the one rational line, which is thus an eigenline of $\id +h_7$. This contradicts the irrationality of the rays spanned by $D_i$ and $D_j$. If Option b) occurs, then the characteristic polynomial of $(\id+h_7)|_{\R D_i + \R D_j}$ has integral coefficients. But its trace is clearly not an integer; this is a contradiction. So we are left with Option a), that is, the nef cone of $\overline{X_7}$ indeed has no non-zero rational boundary point.
\end{example}

The next family of examples is inspired by Borcea \cite{Bor97}, Voisin \cite{Voi93}, and Uehara \cite[Section 4]{Ueh}. Theorem \ref{thm-ccforexplicitvars} provides the first proof of the cone conjecture for this family of examples.

\begin{example}\label{ex-ueh}
Let $S$ be a normal complex projective surface with canonical singularities, with $h^1(\mathcal{O}_S)=0$, and with trivial canonical divisor $K_S=0$. Suppose that $S$ admits an automorphism $g$ of order $d\in\{2,3,4,6\}$ acting non-trivially on the one-dimensional space of sections $h^0(K_S)$. It acts by multiplication by a root of unity, say $\zeta$.
Let $E$ be an elliptic curve, which, if $\zeta\neq -1$, we further assume to have complex multiplication by $\zeta$. We define a diagonal action of the cyclic group $G=\Z/d\Z$ on $S\times E$ by $g$ on $S$ and multiplication by $\zeta^{-1}$ on $E$. The quotient threefold
$$\overline{X} = (S\times E)/G,$$
has canonical singularities and trivial canonical divisor. Since $S$ is a primitive symplectic surface with canonical singularities and $E$ is an elliptic curve, Theorem \ref{thm-ccforexplicitvars} yields the movable cone conjecture for $\overline{X}$. Note that $\rho(\overline{X})=\rho(S/G) + 1$.

Note that by \cite[Section 4]{Ueh}, the threefold $\overline{X}$ admits a crepant resolution $X$. In the case when $S$ is a smooth K3 surface and $d=2$, the Hodge numbers of $X$ are computed and its deformations are studied in \cite[Section 1]{Voi93}. A partial mirror symmetry phenomenon is even established for $X$ in \cite[Théorème 2.17; see also 2.21 and 2.22]{Voi93}. The movable cone conjecture is still not known for $X$ in general. 

\medskip

To conclude this example, we present a few interesting choices for the K3 surface $S$, the curve $E$ and the group $G$. In \cite[Theorem 7.10]{Lutz}, Lutz studies the case when $S$ is a double cover of $\P^2$ branched over a smooth general sextic, $G$ is generated by the corresponding deck involution, and $E$ is arbitrary. He proves the cone conjecture for the resolution $X$ and its deformations. Note that $\rho(X)=6$ here.

Another interesting choice is due to Uehara in \cite[Example 4.3 (ii)]{Ueh}, and we now recall it as presented in the reference. Consider a Kummer surface of the form $\mathrm{Kum}(E\times F)$, for $E,F$ general non-isogeneous elliptic curves. We denote by $E[2]$ and $F[2]$ their respective four $2$-torsion points. In this Kummer surface, we consider the strict transform of the image of the locus 
$$E\times F[2] \quad \cup\quad  E[2]\times F$$
under the quotient map $E\times F\to E\times F/\langle -\id_{E\times F}\rangle$. It consists of eight disjoint $(-2)$-curves. Let $S$ be the surface obtained by contracting these eight curves: it is a singular K3 surface with eight singular points of type $A_1$ and with Picard number $10$.
The involution $(\id_{E_1},-\mathrm{id}_{E_2})$ on $E_1\times E_2$ lifts to $\mathrm{Kum}(E_1\times E_2)$, then descends to an antisymplectic involution $g$ on the singular K3 surface $S$. The quotient threefold $\overline{X}$ associated to $S$, an arbitrary elliptic curve $C$, and the cyclic action by $G=\langle(g,-\id_C)\rangle$ fits our setting. Note that $\rho(\overline{X})=11$. Initially, Uehara constructed this threefold because it admits a crepant resolution $X$ with infinitely many small contractions \cite[Proposition 4.1 b)]{Ueh}. The cone conjecture is still open for this resolution.
\end{example}

Our last example is inspired by Beauville (\cite[3.1, 3.5]{Beauv11}). It is a quotient of a hyperkähler fourfold by an antisymplectic involution. In general, it can be difficult to control the fixed locus of an antisymplectic involution on a hyperkähler manifold; see \cite{Beauv11,FMGS}. Convienently though, Theorem \ref{thm-ccforexplicitvars} applies to any finite quotient of a hyperkähler manifold, regardless of the geometry of the fixed locus. To our knowledge, the cone conjecture was not previously known in this case.

\begin{example}\label{ex-hk}
Let $S$ be a smooth quartic surface without lines in $\P^3$. On its Hilbert square $S^{[2]}$, consider the so-called \textit{Beauville involution} $\iota$ that sends a subscheme $Z$ of length $2$ to the residual intersection of the unique line through $Z$ with the quartic surface $S$. The quotient fourfold 
$$X = S^{[2]}/\langle \iota\rangle$$
has canonical singularities. Note that $\rho(X)=1$, thus $X$ automatically satisfies the cone conjecture.

\medskip

Let us make a few changes to construct an example with higher Picard number --- and thus, hopefully, a more interesting geometry. Let $S$ be the smooth quartic surface given in $\P^3$ by the equation
$${x_0}^4+{x_1}^4+{x_2}^4+{x_3}^4 + 12x_0x_1x_2x_3 = 0,$$
see \cite{Muk88,BS20} for more context on this K3 surface, notably on the action of the Mathieu group $M_{20}$ on it. This surface contains no lines (\cite[Proof of Theorem 1.1]{Deg22}) and has Picard rank 20 (\cite[Proposition 4.4]{BS20}).
%, but Degtyarev, Roulleau, and Naskre\c{c}ki prove that it contains 800 irreducible conics. It also has Picard rank 20; see, for instance, \cite[Proposition 4.4]{BS20}.
Let $\sigma$ denote the involution induced by restriction of
$$[x_0:x_1:x_2:x_3]\in\P^3\mapsto [-x_0:-x_1:x_2:x_3]\in\P^3.$$
It is symplectic and has eight fixed points on $S$. Since $\sigma$ is linear, the induced involution $\sigma^{[2]}$ on the Hilbert square $S^{[2]}$ commutes with the Beauville involution $\iota$.

The N\'eron--Severi space of $S^{[2]}$ decomposes as
$$N^1(S^{[2]})_{\R} \;\cong\; N^1(S)_{\R}\oplus \R\cdot [E],$$
where $E$ denotes the exceptional divisor of the blow-up $S^{[2]}\to \mathrm{Sym}^2 S$. Hence, the fourfold $S^{[2]}$ has Picard rank $21$. Pullback by the Beauville involution $\iota$ fixes the multiples of the class $L:= c_1(\mathcal{O}_{\P^3}(1)|_S) - \frac{1}{2}E$, and acts with eigenvalue $-1$ on a hyperplane in $N^1(S^{[2]})_{\R}$. Clearly, pullback by the involution $\sigma^{[2]}$ also fixes the class $L$. Moreover, the Lefschetz trace formula shows that $\sigma$ acts with eigenvalues $1$ and $-1$ on a $12$-dimensional and on an $8$-dimensional eigenspace in $N^1(S)_{\R}$ respectively.
From there, an easy computation shows that the antisymplectic involution $\tau=\iota\circ\sigma^{[2]}$ preserves a $9$-dimensional subspace of $N^1(S^{[2]})_{\R}$.
The quotient fourfold 
$$Y = S^{[2]}/\langle \tau\rangle$$
thus has Picard number $9$. %%Again, it has canonical singularities.

Since $S^{[2]}$ is a primitive symplectic variety, Theorem \ref{thm-ccforexplicitvars} proves the movable cone conjecture, the nef cone conjecture, and the finiteness of isomorphism classes of minimal models for $Y$.

\medskip

Note that describing the nef or the movable cone of $Y$ seems extremely challenging: to begin with, it is a $9$-dimensional slice of the $21$-dimensional movable cone of $S^{[2]}$. Furthermore, since $S$ is a Kummer surface associated with the product of two elliptic curves by \cite[Proposition 4.5]{BS20}, it has infinitely many automorphisms \cite{KK01}, hence the Hilbert square $S^{[2]}$ has infinitely many automorphisms. This shows that the nef and movable cones of $S^{[2]}$ are not rational polyhedral. To our knowledge, these cones have not been explicitly described. 

Even though the cones associated to $S^{[2]}$ are quite complicated, one may expect $9$-dimensional slices of them to be much simpler. This motivates a daring question: Is the nef cone of $Y$ rational polyhedral? Theorem \ref{thm-convexmain} allows to formulate an equivalent, yet apparently simpler question: Is the centralizer of the involution $\tau$ in $\Aut(S^{[2]})$ a finite group? A first step on this path would be to ascertain whether the centralizer of $\tau$ in the subgroup $\Aut(S)$ is finite. %For that, one can use the realization of $S$ as a Kummer surface \cite[Proposition 4.5]{BS20} and the description of generators of its automorphism group \cite[Theorem 5.4]{KK01}.
\end{example}

\bibliographystyle{plain}
\bibliography{biblioCCEnriques}

\end{document}